\newtheorem{theorem}{Theorem}[section]
\newtheorem{lemma}[theorem]{Lemma}
\newtheorem{corollary}[theorem]{Corollary}
\newtheorem*{theorem*}{Theorem}
\theoremstyle{definition}
\newtheorem{definition}[theorem]{Definition}
\newcommand{\R}{\mathbb{R}}
\newcommand{\Z}{\mathbb{Z}}
\newcommand{\N}{\mathbb{N}}
\theoremstyle{remark}
\newtheorem{remark}[theorem]{Remark}
\numberwithin{equation}{section}
\begin{document}

\title[Interior of very thin Cantor sets]{Interior of certain sums and continuous images of very thin Cantor sets }


\author{Yeonwook Jung}
\address{San Francisco State University Department of Mathematics, 1600 Holloway Ave, San Francisco, CA 94132}
\curraddr{}
\email{yjung4@sfsu.edu}
\thanks{}

\author{Chun-Kit Lai}
\address{San Francisco State University Department of Mathematics, 1600 Holloway Ave, San Francisco, CA 94132
}
\curraddr{}
\email{cklai@sfsu.edu}
\thanks{}

\subjclass[2020]{Primary: 28A80, 28A75 }



\begin{abstract}
We show that for all Cantor set $K_1$ on $\R^d$, it is always possible to find another Cantor set $K_2$ so that the sum $g(K_1)+ K_2$ (where $g$ is a $C^1$ local diffeomorphism) has non-empty interior, and the existence of the interior is robust under small perturbation of the mapping. More generally, we can also show that the image set $H(\alpha, K_1,K_2)$, where $H$ is some $C^1$ function on $\R^N\times\R^d\times\R^d$ with non-vanishing Jacobian, have non-empty interior for $\alpha$ all in an open ball of $\R^N$.   This result allows us to show that all Cantor sets are not topologically universal using $C^1$ local diffeomorphism, proving a stronger version of the topological Erd\H{o}s similarity conjecture. Moreover, we are also able to construct a Cantor set of dimension $d$ on $\R^{2d}$, whose distance set has an interior.  
\end{abstract}

\maketitle

\section{Introduction}
\subsection{Background.} In this paper, {\bf a Cantor set} on $\R^d$ always refers to a compact, totally disconnected and perfect set. We are interested in determining if the image set of Cantor sets under continuously differentiable functions possessed an interior.  The simplest case will be about the Minkowski sum of two Cantor sets. Let $(K_1,K_2)$ be a pair of Cantor sets on $\R^d$. One of the basic questions studied by many mathematicians mainly on $\R^1$ is that

\medskip

{\bf (Qu):} For which pair of Cantor sets  $(K_1,K_2)$, $K_1+\alpha K_2$ has non-empty interior for all $\alpha$ in an interval of $\R$?

\medskip

The study of the topological and the dimensional properties for the algebraic sum of Cantor sets $K_1+K_2$ arises naturally in dynamical system, number theory and harmonic analysis \cite{Astel1997,Number2018,Hall,HS2012}. In dynamical system, the famous Palis-Taken's conjecture (see e.g. \cite{PT-book}) asserted that for generic dynamically defined Cantor sets, the algebraic sum either has empty interior or contains an interval . Substantial progress was made by Moreria and Yoccoz for non-linear Cantor sets \cite{MY2000}.  However, there exists dynamically defined Cantor set $K$ such that $K-K$ has positive measure but no interior \cite{Sannami1992} (See also \cite{SS2010}). 

\medskip

Note that $K_1+\alpha K_2$ is the orthogonal projection of $K_1\times K_2$ onto the line with slope $\tan(\alpha)$. Therefore, $\mbox{dim}_H(K_1+\alpha K_2)\le \mbox{dim}_H(K_1)+ \mbox{$\overline{\mbox{dim}}$}_B(K_2)$ (by the product formula for dimension \cite{falconer-book}). If $K_1+\alpha K_2$ has interior, it is necessary that 
$$
\mbox{dim}_H(K_1)+ \mbox{$\overline{\mbox{dim}}$}_B(K_2)\ge 1.
$$ 
 On the other hand, on $\R^1$, using the notion of  Newhouse thickness $\tau (K)$ via the Newhouse gap lemma, one knows that if $$\tau (K_1)\tau (K_2)\ge 1,$$
 then the $K_1+\alpha K_2$ always contains an interior for all $\alpha\ne 0$. Nonetheless, there is a gap between the sum of the dimensions being larger than 1 and the product of the thickness larger than 1, so the question is undetermined for many pairs of Cantor sets.  In the case of self-similar sets $K_1$ and $K_2$ defined by two maps whose contraction ratios are $a$ and $b$ respectively,  there is a mysterious region ${\mathcal R}$ for which question has remained open (\cite{Solo1997}).  We also refer to \cite{P2018,T2019,N2023} for some recent results about this direction.  The arithmetic sum of Cantor sets is largely open on higher dimensions because there is no obvious choice for defining the Newhouse thickness. Yavicoli \cite{Y2023} recently defined a new version of thickness on $\R^d$, which allows us to determine certain $(K_1,K_2)$  fulfills the question stated. In another direction, Feng and Wu introduced another type of thickness $\tau_{FW}$, on $\R^d$  in which adding enough number of Cantor sets with $\tau_{FW}\ge c>0$ will ensure the sum has an non-empty interior \cite{feng-wu}. 

\medskip

The question about the algebraic sum of Cantor sets can be generalized to the following non-linear setting. As far as we know, this is first due to Simon and Taylor \cite{ST2020}.  Let $H: \Lambda\times \R^d\times \R^d\to \R^d$, where $\Lambda\subset \R^N$ (can be thought as a parameter set) be a continuously differentiable function, denoted by $C^1(\Lambda\times \R^d\times \R^d)$. We can ask under what condition on the pair of Cantor sets $(K_1,K_2)$, we have the image set 
$$
H(\alpha, K_1,K_2) = \{H(\alpha, k_1,k_2): k_1\in K_1, k_2,\in K_2\}
$$
possesses an non-empty interior for all $\alpha$ inside an open ball of $\Lambda$. When $H(\alpha, x,y) = x+\alpha y$, this reduces back to ({\bf Qu}). Simon and Taylor showed that if $\tau(K_1)\tau (K_2)>1$ (with $d=1$) and $H$ has non-vanishing partial derivatives, then $(H(\alpha, K_1,K_2))^{\circ}\ne \varnothing$ around an interval of $\alpha$. This setting allows us to gain extra flexibility to study different problems.   In \cite{ST2020},  they considered $H(x,y) = x^2 +y^2$, which represents  the pinned distance set at the origin of $K_1\times K_2$. They showed
$$
\Delta_0 (K_1\times K_2) = \{\sqrt{x^2+y^2}: (x,y)\in K_1\times K_2\}
$$
has non-empty interior if $\tau(K_1)\tau (K_2)>1$. This offers us a connection to the celebrated Falconer's distance set conjecture. McDonald and Taylor further generalized this result to distance sets determined by trees \cite{MT2023}.   Jiang also studied the result under similar settings, but with more Cantor sets and he obtained conditions for which $H(K_1,K_2)$ is entirely an interval \cite{Jiang2022}.
 \medskip

\subsection{\bf Main Contribution of this paper.} In the aforementioned results, they are mostly on $\R^1$ and a handful of them touches on $\R^d$. Moreover, there is no conclusion about the arithmetic sum  or continuous $C^1$ image of Cantor sets if one of the Cantor sets has Newhouse thickness zero or other notions of thickness is zero. In this paper, we will prove a version of these types of theorems that will work for all Cantor sets regardless of dimensions and thickness and we believe that this is the first type of such theorems that work for all Cantor sets. Adopting the setting by Simon and Taylor \cite{ST2020}, we have the following theorem:

\medskip

\begin{theorem}\label{main-theorem}
    Let $N\ge 1$ and $\Lambda\subset \R^N$ be a set with interior   and let   $\alpha_0\in \Lambda^{\circ}$. Let $Q_1 \subset Q_2\subset \R^d$ and $Q_1^{\circ}\ne \varnothing$.  Let $H$ be a $C^1$ function on $\Lambda\times Q_1\times Q_2$ such that the Jacobian on  $Q_2$ is invertible (see (\ref{eq_Jac}) for the precise definition). Then for all Cantor set $K_1\subset Q_1$, there exists a Cantor set $K_2\subset Q_2$ and a cube centered at $\alpha_0$, ${\mathtt Q}_{\epsilon}(\alpha_0)$ ($\epsilon>0$) such that 
    $$
    \left(\bigcap_{\alpha\in {\mathtt Q}_{{\epsilon}}(\alpha_0)} H(\alpha, K_1,K_2)\right)^{\circ}\ne \varnothing.
    $$
\end{theorem}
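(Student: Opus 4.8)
The plan is to separate the argument into a reduction to Minkowski sums, a purely combinatorial construction, and a last step in which the construction is run uniformly in~$\alpha$. \emph{Step 1 (reduction via the inverse function theorem).} Fix $x_0\in K_1$ and $y_0\in Q_2^{\circ}$, put $z_0=H(\alpha_0,x_0,y_0)$ and $A=\partial_yH(\alpha_0,x_0,y_0)$, invertible by hypothesis. The implicit function theorem gives a $C^1$ map $\Psi$ near $(\alpha_0,x_0,z_0)$ with $H(\alpha,x,\Psi(\alpha,x,z))=z$, and after shrinking, $\Psi$ is, uniformly for $\alpha$ in some cube $\mathtt{Q}_{\epsilon_0}(\alpha_0)$, as $C^1$-close as we wish to the affine map $(x,z)\mapsto A^{-1}z-A^{-1}Bx+c_0$, where $B=\partial_xH(\alpha_0,x_0,y_0)$. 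Hence $z\in H(\alpha,K_1,K_2)$ as soon as some $x$ in a small Cantor subpiece $K_1'\subseteq K_1$ near $x_0$ has $\Psi(\alpha,x,z)\in K_2$, and $x\mapsto\Psi(\alpha,x,z)$ is a uniformly bi-Lipschitz perturbation of $x\mapsto Lx+(\mathrm{const})$ with $L=-A^{-1}B$ (of rank $\ge 1$ by the precise Jacobian hypothesis). So it is enough to build a Cantor set $K_2\subseteq Q_2$ with a fixed cube contained in $L(K_1')+K_2$ for all $L$ near $-A^{-1}B$ and all the (uniformly controlled) perturbations; since every inequality below is strict, the same $K_2$ then also absorbs small $C^1$-perturbations of $H$, which yields the robustness.

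\emph{Step 2 (the Minkowski core, and the limit).} It suffices to show: for every Cantor set $K\subseteq\R^d$ there are a Cantor set $K_2$ and a cube $\mathtt{Q}$ with $\mathtt{Q}\subseteq K+K_2$ (the linear twist and the uniformity of Step~1 being inserted afterwards). I would build $K_2=\bigcap_nC_n$, a decreasing intersection of finite unions of closed cubes whose side lengths tend to~$0$ and in which every cube of $C_n$ contains at least two cubes of $C_{n+1}$, so that $K_2$ is a Cantor set, keeping the single invariant $\mathtt{Q}\subseteq K+C_n$ for all~$n$. This is the right thing to maintain because of the elementary identity $A+\bigcap_nB_n=\bigcap_n(A+B_n)$ for $A$ compact and $(B_n)$ decreasing compact, proved by a subsequence extraction: with $A=K$, $B_n=C_n$ it gives $\mathtt{Q}\subseteq\bigcap_n(K+C_n)=K+K_2$ for free.

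\emph{Step 3 (the refinement step: where perfectness enters).} The crux is the passage from $C_n$ to $C_{n+1}$: inside each cube $J$ of $C_n$ one must choose finitely many strictly smaller, disjoint subcubes, leaving genuine gaps, without breaking $\mathtt{Q}\subseteq K+C_{n+1}$. This is plainly \emph{false} when $K$ is a point, so the perfectness of $K$ must be used. I would first pass to a Cantor subset $\widetilde K\subseteq K$ with a tree structure whose scales and branching are chosen as freely as the geometry of $K$ allows, and then build the $C_n$ ``complementarily'' to the $n$-th generation of $\widetilde K$, modelled on the self-similar case $\widetilde K=\{\sum_ja_jm^{-j}:a_j\in\{0,m-1\}\}$, $K_2=\{\sum_je_jm^{-j}:e_j\in\{0,\dots,m-2\}\}$, for which $\widetilde K+K_2\supseteq[0,1]$ \emph{with no carrying}, so that every finite-stage sum already covers a fixed interval. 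As a general $\widetilde K$ cannot be made genuinely self-similar, the covering must be propagated by hand: the part of $\mathtt{Q}$ currently carried by a cube $J$ must stay hit by a child of $J$, which one forces by keeping the children of $J$, and the gaps between them, small relative to the scale at which $\widetilde K$ splits inside the relevant piece---and perfectness is exactly what guarantees such a scale. Reconciling the two competing demands, that the cubes shrink to~$0$ while the gaps stay small enough to preserve coverage, is what I expect to be the main obstacle; I also expect the resulting $K_2$ to be a ``fat'' Cantor set (positive Lebesgue measure, yet still totally disconnected), which is consistent since total disconnectedness only asks that the side lengths vanish.

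\emph{Step 4 (assembling).} Finally I would run Steps~2--3 for the twisted, parametrised problem, performing the recursion with the maps $H(\alpha,\cdot,\cdot)$ for all $\alpha\in\mathtt{Q}_{\epsilon_0}(\alpha_0)$ simultaneously. Since $\partial_yH$, and hence $\Psi$, varies continuously with $\alpha$ and the cube is compact, finitely many parameters control the rest up to an error that the strict inequalities absorb, at worst after shrinking $\epsilon_0$ to some $\epsilon$. The fixed cube then lies in $H(\alpha,K_1,K_2)$ for every $\alpha\in\mathtt{Q}_\epsilon(\alpha_0)$, hence in $\bigcap_{\alpha\in\mathtt{Q}_\epsilon(\alpha_0)}H(\alpha,K_1,K_2)$, which is the assertion.
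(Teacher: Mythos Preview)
Your reduction in Step~1 (implicit function theorem, then reformulating ``$z\in H(\alpha,K_1,K_2)$'' as an intersection/sum condition) and the uniformity in Step~4 are essentially what the paper does via Lemma~\ref{lemma_3.10} and the compactness of the parameter cube. The invariant $\mathtt{Q}\subseteq K+C_n$ in Step~2 is likewise equivalent to the paper's ``$g_{c,\alpha}(K_1)\cap K_2\neq\varnothing$ for all $(c,\alpha)$ in a cube''.

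The genuine gap is in Step~3. Your heuristic ``perfectness is exactly what guarantees such a scale'' is correct in $\R^1$ but fails for $d\ge 2$. Take $K=C\times\{0\}\subset\R^2$ with $C$ any Cantor set on the line. Then $K+C_n$ has $y$-extent exactly $\pi_2(C_n)$, since $K$ contributes nothing in the $y$-direction. To keep a fixed square inside $K+C_n$ you must keep $\pi_2(C_n)$ covering a fixed interval, yet to make $K_2=\bigcap_nC_n$ totally disconnected you must shrink the cubes of $C_n$ in every direction; your self-similar $\R^1$ model offers no mechanism to reconcile these two constraints. The same obstruction arises whenever the relevant piece of $K$ lies (at all scales) near a coordinate hyperplane, which can happen for genuine Cantor sets in $\R^d$. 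Perfectness gives you a split into two nonempty pieces, but not a split that is visible in each of the $d$ coordinate projections, and it is the latter that your refinement step actually needs.

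This is exactly the difficulty the paper isolates. It introduces the notion of a \emph{non-degenerate} (and \emph{uniformly non-degenerate}) Cantor set---one admitting a nested representation in which at every stage one can find $d+1$ children pairwise separated in every coordinate direction---and proves a containment lemma (Theorem~\ref{containmentlemma-Rd}) only under that hypothesis. The missing idea is then Theorem~\ref{thm-degenerate4}: every Cantor set becomes uniformly non-degenerate after applying a suitable orthogonal transformation $\mathsf O$. One runs your Step~3 on $\mathsf O\bigl(g_{c_0,\alpha_0}(K_1)\bigr)$ rather than on $K_1$ itself, and pulls $K_2$ back by $\mathsf O^{-1}$ at the end. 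Without this rotation, the inductive refinement you describe simply cannot be carried out for degenerate $K$.

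A smaller point: your claim that $L=-A^{-1}B$ has ``rank $\ge 1$ by the precise Jacobian hypothesis'' is not justified---the stated hypothesis is only on $J_{H,2}=A$, so $B=J_{H,1}$ could in principle be singular (even zero). In practice both the paper's proof and your reduction need $g_{c,\alpha}$ to be a local diffeomorphism, i.e.\ both Jacobians invertible; but even granting that, $L$ is then invertible, $L(K_1')$ is still a Cantor set that may be degenerate in the sense above, and the gap in Step~3 remains.
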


As one direct application, we let ${\mathsf{GL}}_d(\R)$ to be the set of all $d\times d$ invertible matrices  and we embed it naturally on $\R^{d^2}$ as an open subset. In other words, ${\mathsf{GL}}_d(\R)$ is endowed with the Frobenius norm: $\|T\|_F = \sqrt{\sum_{i,j=1}^d |T_{ij}|^2}$ and $T = (T_{ij})_{1\le i,j\le d}$. Consider the map $H: {\mathsf{GL}}_d(\R)\times\R^d\times \R^d\to \R^d$ defined by 
$$
H(T,x,y) = Tx+y.
$$
Then the Jacobians on $x$ and $y$ are respectively $T$ and the identity transformation, which are invertible. Hence, we obtain the following corollary immediately from Theorem \ref{main-theorem}.

\begin{corollary}\label{main-theorem2}
    Let $K_1$ be a Cantor set on $\R^d$ and let $T_0\in \mathsf{GL}_d(\R)$. Then there a exists another Cantor set $K_2$ and $\epsilon>0$ such that
    $$
   \left(\bigcap_{\{T: \|T-T_0\|_F<\epsilon\}} (TK_1+ K_2)\right)^{\circ}\ne \varnothing.
    $$
\end{corollary}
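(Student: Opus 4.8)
The corollary is the case $N=d^2$, $\Lambda=\mathsf{GL}_d(\R)$, $\alpha_0=T_0$, $Q_1=Q_2$ a large closed ball, and $H(T,x,y)=Tx+y$ of Theorem~\ref{main-theorem} (the $x$- and $y$-Jacobians are $T$ and the identity, both invertible, and the cube ${\mathtt Q}_\epsilon(T_0)$ produced by the theorem contains a Frobenius ball about $T_0$). So the plan is really to prove Theorem~\ref{main-theorem}, of which the corollary then costs nothing; throughout I use the Jacobian hypothesis in the form that $\partial_xH$ is invertible on $Q_1$ and $\partial_yH$ on $Q_2$ (without some nondegeneracy of $\partial_xH$ the statement is false, since then $H(\alpha,K_1,K_2)$ is a countable-or-worse union of homeomorphic images of the totally disconnected set $K_2$). \textbf{Step 1 (localize and linearize).} Since $K_1$ is compact, totally disconnected and perfect it has clopen subsets of arbitrarily small diameter; fix $k_1^*\in K_1$ and replace $K_1$ by a tiny clopen piece $V\ni k_1^*$ (still a Cantor set, and any $K_2$ working for $V$ works for $K_1$). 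Fix $y_0\in Q_2^\circ$ (nonempty, as $Q_2\supseteq Q_1\supseteq Q_1^\circ$) and set $z_0=H(\alpha_0,k_1^*,y_0)$. The inverse function theorem together with compactness gives a cube ${\mathtt Q}_\epsilon(\alpha_0)$, a small closed ball $\bar B_0\subset Q_2^\circ$ about $y_0$, a small closed ball $\bar B$ about $z_0$, and constants $0<c\le C$ such that for every $(\alpha,x)\in{\mathtt Q}_\epsilon(\alpha_0)\times V$ the maps $y\mapsto H(\alpha,x,y)$ and $x\mapsto H(\alpha,x,y)$ are $C^1$ diffeomorphisms with derivatives and inverse derivatives in $[c,C]$ near $\bar B_0$ and near $V$ respectively, the $y$-map carrying a neighbourhood of $\bar B_0$ onto a set containing $\bar B$. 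So it suffices to build a Cantor set $K_2\subset\bar B_0$ with $H(\alpha,K_1,K_2)\supseteq\bar B$ for all $\alpha\in{\mathtt Q}_\epsilon(\alpha_0)$.

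\textbf{Step 2 (an adaptive multiscale construction of $K_2$, and passage to the limit).} I would construct $K_2=\bigcap_{j\ge0}F_j$, where $F_0\supseteq F_1\supseteq\cdots$ are finite unions of closed axis-parallel cubes in $\bar B_0$, each cube of $F_j$ containing at least two cubes of $F_{j+1}$ and the maximal side length of $F_j$ tending to $0$ --- so $K_2$ is automatically a Cantor set --- while maintaining the invariant
$$
H(\alpha,U_j,F_j)\supseteq\bar B\qquad\text{for all }\alpha\in{\mathtt Q}_\epsilon(\alpha_0),
$$
where $U_j\supseteq K_1$ is a cover of $K_1$ by finitely many closed boxes, each being the bounding box of a clopen piece of $K_1$ of diameter $\le 2^{-n_j}$, the partitions refining and the integers $n_0<n_1<\cdots$ chosen \emph{during} the construction. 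Granting the invariant for all $j$, the theorem follows by a compactness argument: for fixed $z\in\bar B$ and $\alpha$, choose $(x_j,y_j)\in U_j\times F_j$ with $H(\alpha,x_j,y_j)=z$ and pass to a convergent subsequence $(x_{j_l},y_{j_l})\to(x^*,y^*)$; then $x^*\in K_1$ since $\mathrm{dist}(x_{j_l},K_1)\le 2^{-n_{j_l}}\to0$, $y^*\in\bigcap_jF_j=K_2$ since the $F_j$ decrease, and $H(\alpha,x^*,y^*)=z$ by continuity, whence $\bar B\subseteq\bigcap_{\alpha}H(\alpha,K_1,K_2)$.

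\textbf{Step 3 (the inductive step, which is the main obstacle).} Passing from $F_j$ to $F_{j+1}$ is the heart of the argument. Because $K_1$ is perfect, its covering numbers tend to infinity, so for $n_{j+1}$ large enough every clopen piece of the stage-$j$ partition splits into at least two pieces; the union $Z$ of the bounding boxes of the children of a given piece then has $\mathrm{conv}(Z)$ equal to the parent box with all complementary gaps $\le g_j=O(2^{-n_j})$. For each cube $\sigma$ of $F_j$ one fills the part of $F_{j+1}$ inside $\sigma$ with a Newhouse-type pattern tuned to the scale $g_j$: ``chunks'' of side $\asymp 2^{-n_{j+1}}$ separated by tiny gaps $\gamma_{j+1}\ll 2^{-n_{j+1}}\ll g_j\ll 2^{-n_j}$ in each coordinate (at least two pieces per cube, all of size $O(2^{-n_{j+1}})$). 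On the small scales involved, $H(\alpha,x,\cdot)$ and $H(\alpha,\cdot,y)$ are close to affine with bounded distortion and the $\alpha$-dependence is a slowly varying $O(\epsilon)$ perturbation, so that $H(\alpha,Z,F_{j+1}\cap\sigma)$ is, up to controlled error, the image $H(\alpha,\cdot,F_{j+1}\cap\sigma)$ thickened by the linear shadow $\partial_xH(Z)$; the arithmetic/Newhouse fact that the Minkowski sum of a set dense-at-scale-$s$ that includes its endpoints and a set built from chunks of length $\ge s$ with gaps $\le s$ is a full interval then restores the invariant after taking the union over all $\sigma$ and all children boxes, the fixed margin in $\bar B$ absorbing the $O(\epsilon)$ shift and the distortion. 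Since the chunk sides are $O(2^{-n_{j+1}})\to0$, the cubes of $F_j$ shrink to $0$, so $K_2$ is indeed a Cantor set (contained in $\bar B_0\subset Q_2$). \textbf{The obstacle} is precisely to make $F_{j+1}$ simultaneously small (so the intersection is totally disconnected), nested in $F_j$, and ``fat enough'' that the arbitrarily thin given $K_1$ still pushes it onto the \emph{fixed} ball $\bar B$ under a nonlinear $H$, uniformly in $\alpha$; the resolution is the adaptive choice of the scales $n_j$ after each stage --- using only that $K_1$ is perfect --- so that the additional structure $K_1$ carries at scale $2^{-n_{j+1}}$ exactly compensates the refinement loss in $F_{j+1}$, while nonlinearity contributes only small-scale distortion, $\alpha$ only an $O(\epsilon)$ shift, and the end-effects are killed by insisting that each cover box be the bounding box of the piece of $K_1$ it isolates.
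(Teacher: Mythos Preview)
Your reduction of the corollary to Theorem~\ref{main-theorem} with $H(T,x,y)=Tx+y$ is exactly the paper's argument; the corollary itself costs nothing once Theorem~\ref{main-theorem} is available, and the paper says precisely this in the paragraph preceding Corollary~\ref{main-theorem2}.

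Your sketch of Theorem~\ref{main-theorem} itself, however, is a genuinely different route from the paper's and has a real gap. The paper proceeds via a \emph{containment lemma}: from the separation data of $K_1$ it builds a centrally symmetric product Cantor set $\widetilde K$ whose stage-$n$ gaps are smaller than the corresponding separations in $K_1$, and a pigeonhole argument (Theorem~\ref{containmentlemma-Rd}) shows $g(K_1)\cap\widetilde K\ne\varnothing$ robustly under $C^1$ perturbation. This works only when $K_1$ is \emph{uniformly non-degenerate}---pieces of $K_1$ separate in every coordinate direction at comparable rates---and Sections~6--7 are devoted to showing that any Cantor set becomes uniformly non-degenerate after an orthogonal rotation (Theorem~\ref{thm-degenerate4}). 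That rotation is then absorbed into $g$.

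Your multiscale construction runs into exactly this degeneracy obstruction and does not address it. Take $d=2$, $T_0=I$, and $K_1=C\times\{0\}$ with $C$ the middle-third Cantor set. Your $U_j$ are bounding boxes of clopen pieces of $K_1$, hence horizontal segments with zero $y$-extent. For $T=I$ the set $TU_j+F_j=U_j+F_j$ has the same $y$-projection as $F_j$, so your invariant $U_j+F_j\supseteq\bar B$ forces $\pi_2(F_j)$ to contain a fixed interval for every $j$. But your Step~3 fills each cube of $F_j$ by chunks ``separated by tiny gaps $\gamma_{j+1}$ in each coordinate'', so $\pi_2(F_{j+1})$ acquires gaps and the invariant fails at the very next stage. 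More generally, your Newhouse-type step (``a set dense at scale $s$ plus chunks of length $\ge s$ with gaps $\le s$ is an interval'') is one-dimensional; in $\R^d$ it needs the $K_1$-pieces to have comparable extent in all coordinate directions, which is precisely the uniform non-degeneracy the paper isolates and then \emph{arranges} by rotation. Without an analogue of that step your inductive invariant cannot be maintained for degenerate $K_1$, so Step~3 as written does not go through.
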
 

We can prove this theorem more generally by considering invertible $C^1$ functions, not just invertible linear transformations.  To this end, we endow the space of all continuously differentiable mapping from $\R^d$ to $\R^d$, $C^1(\R^d)$, by the topology of local uniform convergence, which turns $C^1(\R^d)$ into a separable metric space, denoted by $(C^1(\R^d),{\mathsf{d}})$. Inside, we let  $C^1_{\mathsf{inv}}(\R^d)$ be all the local diffeomorphisms on $\R^d$ i.e. the functions whose Jacobian is invertible at all points (See Section \ref{section-topology} for more details). This is our main theorem under this general setting.

\begin{theorem}
\label{robust-linear-general-intro}
    Let  $K_1$ be a Cantor set in $\R^d$ and let $g_0\in C^1_{\mathsf{inv}}(\R^d)$. Then there exists $\varepsilon>0$ and  a Cantor set ${K_2}$ such that 
 \begin{equation}\label{eq4.6-2}
 \left(\bigcap_{g\in{\mathsf B}^{\mathsf{d}}_{\varepsilon}(g_0)}(g(K_1)+K_2)\right)^{\circ}\ne \varnothing.
\end{equation} 
\end{theorem}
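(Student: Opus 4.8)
The plan is to localise $K_1$ inside a tiny cube $C$, to observe that on $C$ every map $g$ near $g_0$ agrees with $g_0$ up to a constant translation of controllable size plus a small Lipschitz error, and to feed this into Theorem~\ref{main-theorem} through the $C^1$ map $H(\alpha,x,y)=g_0(x)+\alpha+y$. The main difficulty will then be to pass from the covering that Theorem~\ref{main-theorem} yields for the exact family of translates of $g_0(\widetilde K_1)$ to the covering for the genuinely, if slightly, distorted images $g(\widetilde K_1)$.

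First I would reduce to a well-chosen Cantor subset of $K_1$ sitting in a small cube. Since $g(K_1)\supseteq g(\widetilde K_1)$ for every $\widetilde K_1\subseteq K_1$, it suffices to prove the theorem with $K_1$ replaced by such a subset. Fix $p\in K_1$; as $K_1$ is perfect, for any open ball $B\ni p$ the set $\overline{K_1\cap B}$ is perfect, compact and totally disconnected, hence a Cantor set; choosing $B$ small and a cube $C$ with $\overline B\subseteq C$ centred at $p$, set $\widetilde K_1:=\overline{K_1\cap B}\subseteq C$.

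Next I would linearise the perturbation and invoke Theorem~\ref{main-theorem}. Write $g=g_0+h$; for $g$ in a small ball ${\mathsf B}^{\mathsf{d}}_{\varepsilon_0}(g_0)$ the norm $\|h\|_{C^1(\overline C)}$ is uniformly as small as we wish. As $C$ is convex, $h(x)=h(p)+R_g(x)$ where $R_g(p)=0$, $\sup_{\overline C}|R_g|\le\|Dh\|_{C^0(\overline C)}\,\mathrm{diam}(C)$, and $R_g$ is Lipschitz with constant $\le\|Dh\|_{C^0(\overline C)}$. Hence, for $x\in\widetilde K_1$, $g(x)=g_0(x)+v+R_g(x)$ with $v:=h(p)=g(p)-g_0(p)$, so $g(\widetilde K_1)$ is the translate $g_0(\widetilde K_1)+v$ jittered only by the small Lipschitz field $R_g$. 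Now apply Theorem~\ref{main-theorem} with $N=d$, $\Lambda=\overline B(0,\beta)\subset\R^d$ for some $\beta>0$, $\alpha_0=0$, $Q_1$ a cube containing $\widetilde K_1$, $Q_2$ a large cube, and $H(\alpha,x,y)=g_0(x)+\alpha+y$: its Jacobian in $x$ equals $Dg_0(x)$, invertible since $g_0\in C^1_{\mathsf{inv}}(\R^d)$, and its Jacobian in $y$ is the identity. This produces a Cantor set $K_2$, a cube ${\mathtt Q}_{\epsilon'}(0)$, and, after shrinking, a ball $B(z_0,2\eta_0)\subseteq\bigcap_{\alpha\in{\mathtt Q}_{\epsilon'}(0)}\bigl(g_0(\widetilde K_1)+\alpha+K_2\bigr)$.

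The remaining step---which I expect to be the main obstacle---is to deduce $g(\widetilde K_1)+K_2\supseteq B(z_0,\eta_0)$ for all $g$ in a small ball ${\mathsf B}^{\mathsf{d}}_{\varepsilon}(g_0)$, where $\varepsilon$ is taken small enough that $v\in{\mathtt Q}_{\epsilon'}(0)$ and $\sup_{\widetilde K_1}|R_g|<\eta_0$. For $b\in B(z_0,\eta_0)$, the equation $g(x)+k_2=b$ amounts, after the decomposition above, to the \emph{coupled} condition that $x\in\widetilde K_1$ and $b-g_0(x)-v-R_g(x)\in K_2$; the natural set-valued map $x\mapsto\{x'\in\widetilde K_1:\ b-g_0(x')-v-R_g(x)\in K_2\}$ is nonempty- and compact-valued by the conclusion of Theorem~\ref{main-theorem}, and one would like a fixed point, but since $\widetilde K_1$ is totally disconnected no standard fixed-point theorem applies, and a bare Hausdorff-continuity argument fails because $A\mapsto A+K_2$ is genuinely discontinuous for nowhere dense $A$. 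What I would do instead is not use Theorem~\ref{main-theorem} as a black box at this point but re-run its inductive construction of $K_2$, carrying the field $R_g$ through the levels: since $R_g$ is Lipschitz with constant at most $\|Dh\|_{C^0(\overline C)}$, across any construction cube of side $s$ it differs from a \emph{constant} vector of norm $\le\sup|R_g|$ by at most $O\bigl(\|Dh\|_{C^0(\overline C)}\,s\bigr)$, i.e.\ by an amount negligible relative to that cube, so at every scale $g(\widetilde K_1)$ looks, cube by cube, like $g_0(\widetilde K_1)$ translated by a controlled small vector---exactly the type of perturbation that the recursive, scale-by-scale structure behind the intersection over $\alpha\in{\mathtt Q}_{\epsilon'}(0)$ is able to absorb. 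Coordinating the scales $r_n\downarrow0$ of $K_2$ with $\varepsilon$ so that this absorption survives at every level, together with the elementary identity $g(\widetilde K_1)+K_2=\bigcap_n\bigl(g(\widetilde K_1)+E_n\bigr)$ for the approximating unions $E_n\downarrow K_2$ (a nested intersection of compacta, so it is enough to cover $B(z_0,\eta_0)$ at each finite stage), would complete the proof for $\widetilde K_1$, and hence---since $g(K_1)\supseteq g(\widetilde K_1)$---establish \eqref{eq4.6-2}.
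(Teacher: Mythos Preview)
Your reduction to a small sub-Cantor set and the application of Theorem~\ref{main-theorem} to $H(\alpha,x,y)=g_0(x)+\alpha+y$ are fine, and you correctly isolate the real issue: a black-box invocation of Theorem~\ref{main-theorem} only covers the \emph{translates} $g_0+v$, not the full $C^1$ ball ${\mathsf B}^{\mathsf d}_\varepsilon(g_0)$, and bridging this gap is the entire content of the theorem. The trouble is that your step~4 is not a proof but a promissory note. The heuristic ``across any cube of side $s$, $R_g$ differs from a constant by $O(\|Dh\|\,s)$'' is correct, but the quantity that must survive the perturbation in the containment lemma is the \emph{coordinate}-wise separation $d_{\min}({\mathtt R}_{\sigma p},{\mathtt R}_{\sigma q})$ of the children, and this can be arbitrarily small compared to the diameter of the parent piece ${\mathtt R}_\sigma$. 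In that situation a perturbation of size $O(\|Dh\|\cdot\mathrm{diam}\,{\mathtt R}_\sigma)$, which is exactly what $R_g$ contributes to the relative positions of two children, \emph{does} wipe out $d_{\min}$. Nothing in your outline rules this out, and the fixed-point/approximation schemes you mention do not help for the reason you yourself give.

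The paper closes exactly this gap with the notion of a \emph{uniformly non-degenerate} Cantor set (condition~\eqref{comparable}): one first applies an orthogonal transformation ${\mathsf O}$ so that ${\mathsf O}(g_0(K_1))$ is u.n.d.\ (Theorem~\ref{thm-degenerate4}), and then the inequality $C_1\|x-x'\|\le|\pi_j(x-x')|$ forces coordinate separations to be comparable to actual distances, so the mean-value estimate in Theorem~\ref{robust-linear} yields $b_n(g)\ge\lambda a_n$ \emph{uniformly} over a $C^1$ neighbourhood of $g_0$. That is precisely the ``absorption at every scale'' you allude to, but it does not follow from the Lipschitz bound on $R_g$ alone; without u.n.d.\ one can arrange $g_0(\widetilde K_1)$ so that its $d_{\min}$-gaps collapse under arbitrarily small $C^1$ perturbations. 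Once you add this ingredient, ``re-running the inductive construction carrying $R_g$ through'' becomes literally the paper's route through Theorems~\ref{robust-linear} and~\ref{robust-linear-general}, and the detour via $H(\alpha,x,y)=g_0(x)+\alpha+y$ has bought nothing.
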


Corollary \ref{main-theorem2} and Theorem \ref{robust-linear-general-intro} show that for all Cantor sets $K_1$, we can always find another $K_2$ such that {\bf (Qu)} is true. We notice that if $K_1$ has positive Newhouse thickness, then we can find another $K_2$ so the product of the thickness is larger than 1, then {\bf (Qu)} is holds for the pair $(K_1,K_2)$. If $K_1$ has positive Hausdorff dimension, we were informed by Shmerkin that we can use fractal percolation to construct the Cantor set $K_2$ so that {\bf (Qu)} holds for the pair $(K_1,K_2)$ (\cite[Theorem 13.1]{SV2015}). Therefore, the novelty of this theorem lies on the Cantor sets that are both Newhouse thickness and Hausdorff dimension zero, which we refer to these Cantor sets as {\it very thin}, as our title suggested.

\medskip

The idea of the proof will rely on what we call the {\it containment lemma}, namely, given a certain separation data of the first Cantor set, we can construct the other Cantor set whose convex hull contains the first one and two Cantor sets will intersect. Moreover, the intersection is robust under small $C^1$-perturbation. This can be easily achieved on $\R^1$ using the gap data (Lemma \ref{containment-lemma}). In high dimension, the data will yield useful information only if $K$ is {\it non-degenerate} or {\it uniformly non-degenerate} (Theorem \ref{containmentlemma-Rd}). However, we can show that these are minor assumptions, since we can apply an orthogonal transformation to the Cantor set so that  the resulting Cantor is uniformly non-degenerate (See Section \ref{section-deg} and \ref{section-non-und}). Therefore, upon orthogonal transformations, we still have our containment lemma.

\medskip

These results yield many applications about continuous image of Cantor sets. We will discuss two main examples in Section 8. Concerning the sum set in Theorem \ref{robust-linear-general-intro}, we will see this is related to the Erd\H{o}s similarity problem. Indeed, we will be able to solve a very general topological version of the Erd\H{o}s similarity problem (Theorem \ref{th-topo}), generalizing a previous result in \cite{gallagpher-lai-weber}. Similar to the results by Simon and Taylor \cite{ST2020}, we are also able to say something about the pinned distance set of Cantor sets of Hausdorff dimension 1. We will show that there exists a Cantor set on $\R^2$ of dimension 1 , whose pinned distance set, hence its distance set,  possesses  an interior. The same also holds true for $\R^d$ where $d$ is even. Indeed, such sets trivially exist if it is connected such as a line segment. However, the fact that our set is totally disconnected seems to be new. Further applications can also be made if we consider other $C^1$ functions similar to \cite{Jiang2022}, but we are not going to write in full details in this paper. 

\medskip

We will organize our paper as follows: In Section 2, we will prove our results on $\R^1$, which is much simpler. We will then prove the containment lemma on $\R^d$ in Section 3 for non-degenerate Cantor sets. In Section 4, we will introduce the uniformly non-degenerate Cantor sets and the topology of $C^1$ function. In Section 5, we will prove our main theorem, Theorem \ref{main-theorem} and Theorem \ref{robust-linear-general-intro}. In Sections 6 and 7, we will study those non-degenerate and uniformly non-degenerate Cantor sets respectively. We will show that all will become  uniformly non-degenerate if we apply an orthogonal transformation. Finally, in Section 8, we will discuss the applications of our theory to topological Erd\H{o}s similarity problem and the distance set problems.

\section{Containment lemma on $\R^1$}

We first define our notation. We let $\Sigma^0 = \{\emptyset\}$, $\Sigma^n=\{0,1\}^n$ and $\Sigma^{\ast} = \bigcup_{n=1}^{\infty} \Sigma^n$. We will naturally concatenate $\sigma$ and $\sigma'$ in $\Sigma'$ and denote it by $\sigma\sigma'$.  We say that $\sigma$ is a {\bf descendant} of $\sigma'$ (or $\sigma'$ is an {\bf ancestor} of $\sigma$) if $\sigma = \sigma' \widetilde{\sigma}$ for some $\widetilde{\sigma}$. If $\widetilde{\sigma}$ has length 1, $\sigma$ is a {\bf child} of $\sigma'$.   

\medskip

We describe a Cantor set in $\R$  as a binary Cantor set. Let $K$ be a Cantor set and let $I_{\emptyset}(K)$ be the convex hull of $K$. Then the complement of the $K$ in $I_{\varnothing}(K)$  is a countable union of disjoint bounded open intervals, which we call {\bf (bounded) gaps}. We first write them as $G_n$, $n=1,2,\cdots$ with $|G_1|\ge |G_2|\ge \cdots$ and if they are of the same length, we will enumerate them from the leftmost one. We now define 
$$
I_{\varnothing}(K) = I_0(K)\cup U_{\emptyset} (K) \cup I_1(K), \ \mbox{and} \  U_{\emptyset} (K): = G_1
$$
where we also define by convention that $I_0(K)$ is on the left hand side of $G_1$ and $I_1(K)$ is on the right of $G_1$. 
Suppose that $I_{\sigma}(K)$, $\sigma\in \Sigma^{\ast}$ has been defined. We will let $U_{\sigma}(K)$ to be the largest open interval from the collection of $\{G_n\}_{n\in\N}$ that is in the $I_{\sigma}$. Then we define
$$
I_{\sigma}(K) = I_{\sigma 0}(K)\cup U_{\sigma}(K)\cup I_{\sigma 1}(K).
$$
In this way, the Cantor set $K$ can be represented as 
$$
K  = \bigcup_{n=1}^{\infty} \bigcup_{\sigma\in \Sigma^n} I_{\sigma} (K).
$$
A Cantor set is called a {\bf centrally symmetric Cantor set} if for all $n\in{\mathbb N}$,  $|U_{\sigma}(K)|$ are equal for all $\sigma\in \Sigma^n$.  If we let $\ell_n$ be the length of the gap at the $n^{th}$ stage, i.e. $\ell_n = |U_{\sigma}(K)|$ then 
\begin{equation}\label{eq_ell_n}
\ell_n< |I_{\sigma}| =  \frac{|I_{\emptyset}(K)|-\ell_0-2\ell_1-\cdots -2^{n-1}\ell_{n-1}}{2^n} \quad  \forall\sigma\in \Sigma^n.    
\end{equation}
Conversely, if we are given a sequence $0<\ell_n<1$ that satisfies (\ref{eq_ell_n}), then we can always construct a centrally symmetric Cantor set whose $n$-th stage gap are of length $\ell_n$. 

\medskip

We also recall that $K$ is a {\bf fat Cantor set} if $K$ has positive Lebesgue measure. It happens if and only if 
$$
|I_{\emptyset}(K)|- \sum_{n=1}^{\infty}\sum_{\sigma\in \Sigma^n}|I_{\sigma}(K)|>0.
$$


\begin{lemma}{(Containment lemma)}
\label{containment-lemma}
Let $K$ be a Cantor set in $\R$. Suppose that $\widetilde{K}$ is a  Cantor set in $\R$ with $I_{\emptyset}(K) \subset I_{\emptyset}(\widetilde{K})$ and \begin{equation}\label{eq_gap-length}
\max\{|U_{\sigma}(\widetilde{K})|:\sigma\in\Sigma^n\}<\min \{ |U_{\sigma}(K)|:\sigma\in \Sigma^n \} \quad \forall n\in\N.
\end{equation}
Then, $K\cap \widetilde{K}\neq \varnothing.$
\end{lemma}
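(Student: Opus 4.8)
The plan is to show that the two Cantor sets must intersect by a nested-interval / compactness argument, tracking how the construction tree of $\widetilde K$ interleaves with that of $K$.

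First I would set up the key geometric observation behind the gap hypothesis \eqref{eq_gap-length}. Recall that $I_\varnothing(K)\subset I_\varnothing(\widetilde K)$. The idea is to produce, for each $n$, a word $\sigma(n)\in\Sigma^n$ such that the interval $I_{\sigma(n)}(\widetilde K)$ of $\widetilde K$ contains at least one point of $K$ — in fact I would aim for the stronger statement that $I_{\sigma(n)}(\widetilde K)$ contains one of the left/right endpoints of some stage-$n$ interval $I_{\tau}(K)$ of $K$, or more robustly that $I_{\sigma(n)}(\widetilde K)\cap K\ne\varnothing$, and arrange these nested: $I_{\sigma(n+1)}(\widetilde K)\subset I_{\sigma(n)}(\widetilde K)$. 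Then $\bigcap_n I_{\sigma(n)}(\widetilde K)$ is a single point $x\in\widetilde K$, and since each $I_{\sigma(n)}(\widetilde K)$ meets the compact set $K$, a limit point of those intersection points lies in $K$; because the diameters shrink to $0$, that limit point is $x$ itself, giving $x\in K\cap\widetilde K$.

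The inductive step is where the hypothesis does its work. Suppose $I_{\sigma}(\widetilde K)\cap K\ne\varnothing$ for $\sigma\in\Sigma^n$; write $I_\sigma(\widetilde K)=I_{\sigma0}(\widetilde K)\cup U_\sigma(\widetilde K)\cup I_{\sigma1}(\widetilde K)$. If the gap $U_\sigma(\widetilde K)$ contained all of $I_\sigma(\widetilde K)\cap K$, then $K$ would have a gap (relative to its convex hull, or at least within $I_\sigma(\widetilde K)$) of length at least — no: the point is subtler, since $I_\sigma(\widetilde K)\cap K$ need not be an interval's worth of $K$. Here is the cleaner route: pick any point $p\in I_\sigma(\widetilde K)\cap K$, and let $I_\tau(K)$ be the stage-$m$ interval of $K$ containing $p$, for $m$ large enough that $|I_\tau(K)| < |U_\sigma(\widetilde K)|$ — wait, that inequality goes the wrong way. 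Let me instead argue by contradiction at the level of gaps: the removed gap $U_\sigma(\widetilde K)$ has length strictly less than $|U_{\tau}(K)|$ for every $\tau\in\Sigma^n$; but I want to compare against gaps of $K$ that actually sit inside $I_\sigma(\widetilde K)$. Since $I_\varnothing(K)\subset I_\varnothing(\widetilde K)$, and since $K\subset I_\varnothing(K)$, every gap of $K$ is a gap "seen" at some finite stage; the hypothesis controls all of them at each level simultaneously. Concretely I would show: $U_\sigma(\widetilde K)$, being shorter than every stage-$n$ gap of $K$, cannot contain the closure of any stage-$(n{-}1)$-or-earlier interval $I_\eta(K)$ whose removed child-gap $U_\eta(K)$ has $|U_\eta(K)|>|U_\sigma(\widetilde K)|$ — so $U_\sigma(\widetilde K)$ fits "between" two neighboring points of a sufficiently fine approximation of $K$, hence cannot swallow all of $I_\sigma(\widetilde K)\cap K$. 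Therefore at least one of $I_{\sigma0}(\widetilde K)$, $I_{\sigma1}(\widetilde K)$ still meets $K$; choose that child as $\sigma(n+1)$'s extension. One must also handle the base case $n=0$: $I_\varnothing(\widetilde K)\supset I_\varnothing(K)\supset K\ne\varnothing$, so it meets $K$; and the endpoints of $I_\varnothing(\widetilde K)$ lie in $\widetilde K$, but that's not needed.

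The main obstacle I anticipate is making the phrase "$U_\sigma(\widetilde K)$ cannot contain all of $I_\sigma(\widetilde K)\cap K$" fully rigorous, because $I_\sigma(\widetilde K)\cap K$ is a complicated Cantor-like set, not an interval, so "shorter than every gap of $K$ at level $n$" must be leveraged carefully — the right formulation is probably: if $a<b$ are two points of $K$ lying in $I_\sigma(\widetilde K)$ with $(a,b)\cap K=\varnothing$, then $(a,b)$ is a gap of $K$ of some generation, and one shows its generation is $\ge n$ whenever $[a,b]\subset I_\sigma(\widetilde K)$ (using that $I_\sigma(\widetilde K)$ already excised $n$ of the largest gaps of $\widetilde K$, each longer than the corresponding gaps of $K$) hence $|b-a|\ge \min\{|U_\tau(K)|:\tau\in\Sigma^n\}>|U_\sigma(\widetilde K)|$, so $(a,b)$ cannot equal or contain $U_\sigma(\widetilde K)$, forcing a point of $K$ into one of the two children intervals of $I_\sigma(\widetilde K)$. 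Pinning down that generation bound is the delicate combinatorial point; once it is in hand, the nested-interval conclusion is immediate.
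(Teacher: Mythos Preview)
Your overall architecture---build nested intervals $I_{\sigma(n)}(\widetilde K)$ each meeting $K$, then invoke compactness---is sound, but the inductive invariant ``$I_\sigma(\widetilde K)\cap K\ne\varnothing$'' is too weak to propagate, and your proposed justification for the step is incorrect. First, the deduction ``generation $\ge n$ implies $|b-a|\ge\min_{\tau\in\Sigma^n}|U_\tau(K)|$'' goes the wrong way: gaps only get \emph{smaller} as one descends the tree of $K$, so a generation-$\ge n$ gap can be arbitrarily short. Second, and more seriously, the inductive step itself is false. Here is a failure at level $n=1$: take $I_\emptyset(\widetilde K)=[0,1000]$, $U_\emptyset(\widetilde K)=(100,105)$, $U_0(\widetilde K)=(49,51)$; take $I_\emptyset(K)=[49.5,500]$, $U_\emptyset(K)=(200,310)$, $U_0(K)=(50.5,150)$, and $U_1(K)$ of length $10$, with the remaining data filled in consistently with \eqref{eq_gap-length}. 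Then $I_0(\widetilde K)=[0,100]$ meets $K$ only in $K\cap I_{00}(K)\subset[49.5,50.5]\subset(49,51)=U_0(\widetilde K)$, so neither child $I_{00}(\widetilde K)=[0,49]$ nor $I_{01}(\widetilde K)=[51,100]$ meets $K$. Your greedy descent can therefore run into a dead end.

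The paper avoids this by strengthening the inductive hypothesis: it maintains $\sigma_n,\sigma_n'\in\Sigma^n$ (each a child of its predecessor) with the full containment $I_{\sigma_n}(K)\subset I_{\sigma_n'}(\widetilde K)$. Carrying an entire level-$n$ interval of $K$ inside a level-$n$ interval of $\widetilde K$ reduces the step to a single endpoint comparison: since $|U_{\sigma_n'}(\widetilde K)|<|U_{\sigma_n}(K)|$, removing the two gaps forces one child of $I_{\sigma_n}(K)$ to sit inside one child of $I_{\sigma_n'}(\widetilde K)$. In the example above this picks $I_1(K)=[310,500]\subset[105,1000]=I_1(\widetilde K)$ at level $1$, correctly steering away from the dead branch $I_0(\widetilde K)$.
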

\begin{proof}
Since $K$ and $\widetilde{K}$ are compact, it suffices to show that there exists a sequence $(\alpha_n, \widetilde{\alpha}_n)\in K\times \widetilde{K}$ such that $\lim_{n\to\infty} |\alpha_n-\widetilde{\alpha}_n|=0$. To show this, we first prove the following claim.

\medskip

\noindent \textit{Claim.} For each $n\in \mathbb{N}$, there exists $\sigma_n, \sigma_n '\in \Sigma^n$ where $\sigma_n$ is a child of $\sigma_{n-1}$ and $\sigma_n'$ is a child of $\sigma_{n-1}'$, such that $I_{\sigma_n}(K)\subset I_{\sigma_n'}(\widetilde{K})$.

\medskip

\noindent Assuming the claim, we can just take $\alpha_n\in K\cap I_{\sigma_n}(K)$ and $\alpha_n'\in I_{\sigma_n'}(\widetilde{K})$. Since $|I_{\sigma_n'}(K)|\to 0$ as $n\to\infty$ and $|\alpha_n-\alpha_n'|\le |I_{\sigma_n'}(K)|$, this finishes the proof. 

\medskip

 We  now prove the above claim by induction. Note that the base case is true since $I_\varnothing (K)\subset I_\varnothing (\Tilde{K})$ by assumption. For the induction hypothesis, suppose that there exist $\sigma_n$ and $\sigma_n'$ such that $I_{\sigma_n}(K)\subset I_{\sigma'_n}(\widetilde{K})$ where $\sigma_n$ and $\sigma'_n$ form a chain of children up to $n$.   We now proceed to the induction step of $n+1$.

\medskip

{\color{black} Let us write $I_{\sigma_n}(K) = [a_0,b_0]$ and $I_{\sigma'_n}(\widetilde{K}) = [c_0,d_0]$. Then let us  denote the next children  $I_{\sigma_{n}0}(K)$, $I_{\sigma_{n}1}(K)$, $I_{\sigma'_{n}0} (\widetilde{K})$, $I_{\sigma'_{n}1} (\widetilde{K})$ respectively by  $[a_0,a]$,$[b,b_0]$, $[c_0,c]$ and $[d,d_0]$,  so that  }
$$
{\color{black} I_{\sigma_{n}}(K) =[a_0,a]\cup U_{\sigma_n}(K)\cup [b,b_0], \ I_{\sigma'_{n}}(\widetilde{K}) =[c_0,c]\cup U_{\sigma_n'}(\widetilde{K})\cup [d,d_0].}
$$
{\color{black} By induction hypothesis $c_0\le a_0$ and $b_0\le d_0$. From our assumption, }
\begin{align*}
 {\color{black}   d-c=|U_{\sigma_n}(\widetilde{K})| < \min \{ |U_{\sigma_{n}}(K)|:\sigma_{n}\in \Sigma^{n} \} \leq |U_{\sigma_{n}}(K)|=b-a}.
\end{align*}
{\color{black} This means that we must have $a<c$ or $d<b$. In the first case, $[c_0,c]\supset [a_0,a]$ meaning that $I_{\sigma'_{n}0}(\widetilde{K})\supset I_{\sigma_{n}0}({K})$, while in the second case, we have $I_{\sigma'_{n}1}(\widetilde{K})\supset I_{\sigma_{n}1}({K})$. This shows that the claim holds true for $n+1$ and hence completes the whole proof.}
\end{proof}

\begin{remark}
\begin{enumerate}
\item We note that there always exists $\widetilde{K}$ satisfying (\ref{eq_gap-length}), since by taking $\ell_n>0$ small enough, we can always construct a symmetric Cantor set whose $n^{th}$-stage gap length satisfies both (\ref{eq_gap-length}) and (\ref{eq_ell_n}). 
\item For convenience, we arrange the gaps in decreasing lengths. Indeed, the proof did not use this fact. If we successively remove a gap, not necessarily the largest gap, from $I_{\sigma}(K)$, we obtain another representation of the Cantor set and we can still find $\widetilde{K}$ for the containment lemma to hold in this situation. 
\item Unfortunately, the resulting Cantor set $\widetilde{K}$ is always a fat Cantor set. It is because the Lebesgue measure of $\widetilde{K}$ is given by 
$$
|I_{\emptyset}(\widetilde{K})|- \sum_{n=1}^{\infty}\sum_{\sigma\in \Sigma^n}|I_{\sigma}(\widetilde{K})| >|I_{\emptyset}(K)|- \sum_{n=1}^{\infty}\sum_{\sigma\in \Sigma^n}|I_{\sigma}(K)|\ge 0
$$
where we used the fact that $I_{\emptyset}(K) \subset I_{\emptyset}(\widetilde{K})$ and (\ref{eq_gap-length}) in the inequality above. 
\end{enumerate}
\end{remark} 

The following corollary gives a stable perturbation under small scaling for the containment lemma. 
\begin{corollary}\label{cor-perturbation}
   If $K$ is a Cantor set in $\R$, then there exists a Cantor set $\widetilde{K}$ in $\R$ and $\delta >0$ such that $K\cap (\lambda \widetilde{K}+t) \neq \varnothing$ for all $\lambda\in (1-\delta, 1+\delta)$ and for all $t\in (-\delta, \delta)$. In particular, $K+\lambda \widetilde{K}$ has a non-empty interior. 
\end{corollary}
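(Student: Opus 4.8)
The plan is to combine the containment lemma (Lemma \ref{containment-lemma}) with a \emph{strict} version of the gap inequality so that we have room to absorb small perturbations. Given the Cantor set $K$ in $\R$, I would first record the quantities $m_n := \min\{|U_\sigma(K)| : \sigma\in\Sigma^n\} > 0$ and $L := |I_\varnothing(K)|$. The key point is that nothing in the proof of Lemma \ref{containment-lemma} is fragile: it only needs the nesting of convex hulls $I_\varnothing(K)\subset I_\varnothing(\widetilde K)$ and the strict gap comparison $\max\{|U_\sigma(\widetilde K)|:\sigma\in\Sigma^n\} < m_n$ for every $n$, and it produces a point of $K\cap\widetilde K$. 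So I would construct a single centrally symmetric Cantor set $\widetilde K$ with a \emph{substantial} margin in both of these inequalities, and then check that after rescaling by $\lambda\approx 1$ and translating by $t\approx 0$, the set $\lambda\widetilde K + t$ still satisfies the hypotheses of the lemma against $K$.

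Concretely, first choose the convex hull of $\widetilde K$ to be an interval $I_\varnothing(\widetilde K) = [p,q]$ strictly containing $I_\varnothing(K)$, say with $I_\varnothing(K)$ at positive distance $\rho > 0$ from both endpoints $p$ and $q$. Next, using the construction recipe after \eqref{eq_ell_n}, pick gap lengths $\ell_n > 0$ for $\widetilde K$ that are small enough to satisfy \eqref{eq_ell_n} (so that a centrally symmetric Cantor set with these gaps exists) and additionally satisfy the strengthened bound $\ell_n < \tfrac12 m_n$ for all $n$. This yields a Cantor set $\widetilde K\subset[p,q]$ with $\max\{|U_\sigma(\widetilde K)|:\sigma\in\Sigma^n\} = \ell_n < \tfrac12 m_n$. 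Now set $\delta := \min\{\rho/(q-p+1),\ \varepsilon_0\}$ where $\varepsilon_0$ is chosen so small that for $|\lambda - 1| < \delta$ we have $\lambda\ell_n < m_n$ for every $n$; this is possible because $\lambda\ell_n < m_n$ is implied by $\lambda < 2$, which holds once $\delta < 1$, so in fact $\delta := \min\{\rho/(q-p+1),\ 1/2\}$ works and no $n$-dependent smallness is needed. Indeed $\lambda\ell_n < \tfrac32 \cdot \tfrac12 m_n < m_n$.

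For such $\lambda$ and $t$, the rescaled-and-translated Cantor set $\lambda\widetilde K + t$ has convex hull $[\lambda p + t, \lambda q + t]$; since $\lambda$ is within $\delta$ of $1$ and $|t| < \delta$, a short estimate shows each endpoint moves by less than $\rho$, so $I_\varnothing(K)\subset[\lambda p + t,\lambda q + t] = I_\varnothing(\lambda\widetilde K + t)$ still holds. The $n$-th stage gaps of $\lambda\widetilde K + t$ are exactly $\lambda\ell_n$, and we arranged $\lambda\ell_n < m_n = \min\{|U_\sigma(K)|:\sigma\in\Sigma^n\}$. Hence Lemma \ref{containment-lemma} applies to the pair $(K,\ \lambda\widetilde K + t)$ and gives $K\cap(\lambda\widetilde K + t)\ne\varnothing$. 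Finally, for the last assertion: $K\cap(\lambda\widetilde K + t)\ne\varnothing$ means there exist $k\in K$ and $\widetilde k\in\widetilde K$ with $k = \lambda\widetilde k + t$, i.e. $t = k - \lambda\widetilde k \in K - \lambda\widetilde K = K + \lambda(-\widetilde K)$; as $t$ ranges over the open interval $(-\delta,\delta)$ this shows $K + \lambda(-\widetilde K)\supset(-\delta,\delta)$, and replacing $\widetilde K$ by its reflection $-\widetilde K$ (still a Cantor set in $\R$) gives that $K + \lambda\widetilde K$ contains an interval, hence has non-empty interior.

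The only mild subtlety — and the one place to be careful rather than a genuine obstacle — is bookkeeping the two perturbations simultaneously: the scaling both moves the endpoints of the convex hull and dilates the gaps, and the translation only shifts. One must pick the hull of $\widetilde K$ with enough slack $\rho$ relative to its diameter $q-p$ so that a $\delta$-perturbation of both kinds cannot push $I_\varnothing(K)$ outside, while independently keeping the gap dilation harmless using the uniform factor-of-two margin $\ell_n < \tfrac12 m_n$. Decoupling these two requirements — a geometric slack for the hull, a multiplicative slack for the gaps — is what makes a single $\delta$ work for all $n$ at once.
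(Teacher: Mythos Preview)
Your approach is exactly the paper's: build $\widetilde K$ with strict hull containment and a factor-of-two slack in the gap inequality \eqref{eq_gap-length}, then note that $I_\sigma(\lambda\widetilde K+t)=\lambda I_\sigma(\widetilde K)+t$ so the hypotheses of Lemma~\ref{containment-lemma} survive small $(\lambda,t)$-perturbation. One small slip: your explicit choice $\delta=\rho/(q-p+1)$ controls the endpoint displacement $|(\lambda-1)p+t|\le\delta(|p|+1)$ only when $|p|,|q|\le q-p$, which need not hold if $K$ sits far from the origin---you want $\delta\le\rho/(\max(|p|,|q|)+1)$ instead (the paper sidesteps this by just saying ``$\delta$ sufficiently small'').
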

\begin{proof}
We take the Cantor set $\widetilde{K}$ such that $I_{\emptyset}(\widetilde{K})$ strictly contain $I_{\emptyset}(K)$ and satisfying (\ref{eq_gap-length}) with right hand side replaced by $1/2\cdot \min\{|U_{\sigma}(K)|: \sigma\in\Sigma^n\}$. Then, $K\cap \widetilde{K}\ne\emptyset$. Note that $I_{\sigma}(\lambda \widetilde{K}+t) = \lambda I_{\sigma}(\widetilde{K})+t$.     By choosing $\delta$ sufficiently small, the conditions $I_{\emptyset}(K) \subset I_{\emptyset}(\lambda\widetilde{K}+t)$ and (\ref{eq_gap-length}) holds true with $\widetilde{K}$ replaced by $\lambda\widetilde{K}+t$ for all $\lambda\in (1-\delta,1+\delta)$ and $t\in (-\delta,\delta)$.  The containment lemma (Lemma \ref{containment-lemma}) gives our desired conclusion. 
\end{proof}

\subsection{Interior for general $C^1$ functions.} We are going to follow the setup obtained by Simon and Taylor. We will denote by ${\mathtt I}_{\delta}(a)$ the closed interval of length $\delta$ and centered at $a$.  Let $\Lambda$ be a compact parameter interval with center $\alpha_0$ and let $J_1$ and $J_2$ be two compact intervals. We  are also given  $H\in C^1(\Lambda\times J_1\times J_2)$ such that $H(\alpha, x,y)$ has non-vanishing partial derivatives $H_x$ and $H_y$ on $\Lambda\times J_1\times J_2$. 

\medskip

Fix $(u_1,u_2)\in (J_1\times J_2)^{\circ}$. Set $c_0 = H(\alpha_0,u_1,u_2)$ and let $\delta_0>0$ so that 
\begin{equation}\label{eq-interior-delta_0}
{\mathtt I}_{\delta_0}(\alpha_0)\times {\mathtt I}_{\delta_0}(u_1)\times {\mathtt I}_{\delta_0}(u_2)\subset (\Lambda\times J_1\times J_2)^{\circ}.
\end{equation}
Let also 
$$
S= {\mathtt I}_{\delta_0}(c_0)\times {\mathtt I}_{\delta_0}(\alpha_0)\times {\mathtt I}_{\delta_0}(u_1)\times {\mathtt I}_{\delta_0}(u_2).
$$
and $F:S\to \R$ defined by
$$
F(c,\alpha,x,y) = H(\alpha,x,y)-c.
$$
Note that $F_y =H_y\ne 0$ by assumption and we have $F(c_0,\alpha_0,u_1,u_2) = 0$. 
By the implicit function theorem, we can find a $\delta_1\in(0,\delta_0)$, a closed set 
\begin{equation}\label{eq_N}
Z = {\mathtt I}_{\delta_1}(c_0)\times {\mathtt I}_{\delta_1}(\alpha_0)\times {\mathtt I}_{\delta_1}(u_1)
\end{equation}
and a function  $G: Z\to {\mathtt I}_{\delta_0}(u_2)$ such that $G(c_0,\alpha_0, u_1) = u_2$ and
\begin{equation}\label{eq_HG}
H(\alpha, x, G(c, \alpha, x)) = c.
\end{equation}
We  define
\begin{equation}\label{eq-g}
g_{c,\alpha}(x) = G(c,\alpha, x), \ (c,\alpha)\in {\mathtt I}_{\delta_1}(c_0)\times {\mathtt I}_{\delta_1}(\alpha_0)
\end{equation}
The following lemma was proved by Taylor and Simon.

\begin{lemma}\label{lemma_2.4}
    For each $i = 1,2$, let $K_i\subset J_i$ be two Cantor sets. Suppose that 
    $$
    g_{c,\alpha}(K_1)\cap K_2\ne\emptyset \ \mbox{for all} \  (c,\alpha)\in {\mathtt I}_{\delta_1}(c_0)\times {\mathtt I}_{\delta_1}(\alpha_0).
$$
Then 
$$
{\mathtt I}_{\delta_1} (c_0)\subset \bigcap_{\alpha\in {\mathtt I}_{\delta_1}(\alpha_0)} H(\alpha, K_1,K_2).
$$
\end{lemma}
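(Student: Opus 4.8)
The plan is to unwind the implicit function theorem setup and show that each point $c \in {\mathtt I}_{\delta_1}(c_0)$ lies in $H(\alpha, K_1, K_2)$ for every $\alpha \in {\mathtt I}_{\delta_1}(\alpha_0)$ simultaneously. Fix such a $c$ and such an $\alpha$. By the hypothesis applied to this particular pair $(c,\alpha) \in {\mathtt I}_{\delta_1}(c_0)\times{\mathtt I}_{\delta_1}(\alpha_0)$, there exists a point $x \in K_1$ with $g_{c,\alpha}(x) \in K_2$; write $y := g_{c,\alpha}(x) = G(c,\alpha,x) \in K_2$. The key identity is (\ref{eq_HG}): since $(c,\alpha,x)\in Z$, we have $H(\alpha, x, G(c,\alpha,x)) = c$, that is, $H(\alpha, x, y) = c$. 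Since $x \in K_1$ and $y \in K_2$, this exhibits $c$ as an element of $H(\alpha, K_1, K_2)$.

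Because $x$ (and hence $y$) may depend on both $c$ and $\alpha$, but the displayed conclusion only requires, for each fixed $c$, that $c$ belong to $H(\alpha,K_1,K_2)$ for all $\alpha$, the argument above already suffices: for every $\alpha \in {\mathtt I}_{\delta_1}(\alpha_0)$ we produced a valid pair $(x,y)\in K_1\times K_2$ with $H(\alpha,x,y)=c$, so $c \in \bigcap_{\alpha\in{\mathtt I}_{\delta_1}(\alpha_0)} H(\alpha,K_1,K_2)$. Letting $c$ range over ${\mathtt I}_{\delta_1}(c_0)$ gives the inclusion ${\mathtt I}_{\delta_1}(c_0) \subset \bigcap_{\alpha\in{\mathtt I}_{\delta_1}(\alpha_0)} H(\alpha,K_1,K_2)$.

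There is essentially no obstacle here: the lemma is a direct bookkeeping consequence of the implicit function theorem construction, and the only thing to double-check is that the ranges match up — namely that for $(c,\alpha,x)\in Z = {\mathtt I}_{\delta_1}(c_0)\times{\mathtt I}_{\delta_1}(\alpha_0)\times{\mathtt I}_{\delta_1}(u_1)$ the value $G(c,\alpha,x)$ indeed lands in ${\mathtt I}_{\delta_0}(u_2)\subset J_2$ so that it makes sense to ask whether it lies in $K_2\subset J_2$, and that $x$ ranges over ${\mathtt I}_{\delta_1}(u_1) \subset J_1 \supset K_1$ so that picking $x \in K_1$ is legitimate (this requires knowing $K_1$ actually meets ${\mathtt I}_{\delta_1}(u_1)$, which should be arranged as part of the choice of $(u_1,u_2)$ and the Cantor sets, or else the hypothesis $g_{c,\alpha}(K_1)\cap K_2\neq\varnothing$ is only meaningful on $K_1\cap {\mathtt I}_{\delta_1}(u_1)$ and we restrict attention there). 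The mild subtlety worth a sentence is that the intersection over $\alpha$ is handled for free precisely because the quantifier order in the conclusion is "$\forall c\ \forall \alpha\ \exists (x,y)$" rather than "$\forall c\ \exists (x,y)\ \forall \alpha$".
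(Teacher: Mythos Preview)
Your proof is correct and is essentially identical to the paper's own argument: fix $(c,\alpha)$, use the hypothesis to find $x\in K_1$ with $y:=g_{c,\alpha}(x)\in K_2$, and apply the defining identity $H(\alpha,x,G(c,\alpha,x))=c$ to conclude $c\in H(\alpha,K_1,K_2)$. Your extra commentary on quantifier order and domain-matching is accurate but not needed for the lemma as stated; the question of whether $K_1$ actually meets ${\mathtt I}_{\delta_1}(u_1)$ is handled later in the paper (in the proof of Theorem \ref{theorem-interiorH}) when the lemma is applied, not in the lemma itself.
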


\begin{proof}
    We prove it here for the sake of completeness. For each $(c,\alpha)\in {\mathtt I}_{\delta_1}(c_0)\times {\mathtt I}_{\delta_1}(\alpha_0)$, our assumption implies that there exist $k_1\in K_1$ and $k_2\in K_2$ such that $g_{c,\alpha}(k_1) = k_2$. Hence, $H(\alpha,k_1, k_2) = H(\alpha,k_1, g_{c,\alpha} (k_1)) = c$. This shows that $c\in H(\alpha, K_1, K_2)$, and we obtain our desired conclusion. 
\end{proof}

We also need a derivative estimate.

\begin{lemma}\label{lemma-derivative}
There exists $\eta>0$ such that 
$$
|g'_{c,\alpha} (x)|\ge \eta>0, \ \forall (c,\alpha, x)\in N.
$$
\end{lemma}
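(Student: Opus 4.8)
The plan is to differentiate the defining identity for $g_{c,\alpha}$ and then invoke compactness. Recall from (\ref{eq_HG}) that $H(\alpha, x, g_{c,\alpha}(x)) = c$ for all $(c,\alpha,x)\in Z$. Since $H\in C^1$ and $g_{c,\alpha}$ is $C^1$ in $x$ (it is the implicit function supplied by the implicit function theorem), I would differentiate this identity with respect to $x$ via the chain rule to obtain
$$
H_x\big(\alpha, x, g_{c,\alpha}(x)\big) + H_y\big(\alpha, x, g_{c,\alpha}(x)\big)\, g'_{c,\alpha}(x) = 0,
$$
and hence, using that $H_y$ never vanishes,
$$
g'_{c,\alpha}(x) = -\,\frac{H_x\big(\alpha, x, g_{c,\alpha}(x)\big)}{H_y\big(\alpha, x, g_{c,\alpha}(x)\big)}.
$$

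Next I would observe that as $(c,\alpha,x)$ ranges over $Z = {\mathtt I}_{\delta_1}(c_0)\times {\mathtt I}_{\delta_1}(\alpha_0)\times {\mathtt I}_{\delta_1}(u_1)$, the point $\big(\alpha, x, g_{c,\alpha}(x)\big)$ lies in the compact set ${\mathtt I}_{\delta_0}(\alpha_0)\times {\mathtt I}_{\delta_0}(u_1)\times {\mathtt I}_{\delta_0}(u_2)$, since $\delta_1<\delta_0$ and $G$ takes values in ${\mathtt I}_{\delta_0}(u_2)$; by (\ref{eq-interior-delta_0}) this compact set is contained in the domain $\Lambda\times J_1\times J_2$ where $H$ is $C^1$ and where $H_x$, $H_y$ are non-vanishing. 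Since $|H_x|$ and $|H_y|$ are continuous and strictly positive on this compact set, the extreme value theorem gives constants $0<m\le |H_x|$ and $|H_y|\le M<\infty$ there. Substituting into the displayed formula yields $|g'_{c,\alpha}(x)|\ge m/M =: \eta>0$ for all $(c,\alpha,x)\in Z$, which is the claim.

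There is no substantial obstacle here: the argument is a one-line differentiation followed by a standard compactness (extreme value) estimate. The only point that requires a moment's care is verifying that the relevant points $\big(\alpha, x, g_{c,\alpha}(x)\big)$ remain inside the compact region on which the non-vanishing hypothesis on $H_x, H_y$ is available, which follows immediately from the nesting $\delta_1<\delta_0$ and the range of $G$ built into the implicit function theorem step.
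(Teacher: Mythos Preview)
Your proof is correct and follows essentially the same route as the paper: differentiate the implicit relation to get $g'_{c,\alpha}(x)=-H_x/H_y$, then use continuity and non-vanishing of $H_x,H_y$ on a compact set to extract a uniform positive lower bound. The paper additionally remarks that $g'_{c,\alpha}$ never changes sign (so $g_{c,\alpha}$ is strictly monotone), but this is not needed for the lemma as stated; your version is slightly more explicit about which compact set the argument $\big(\alpha,x,g_{c,\alpha}(x)\big)$ lives in.
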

\medskip
\begin{proof}
    From (\ref{eq_HG}), 
$$
g_{c,\alpha}'(x) = -\frac{H_x(\alpha,x,G(c,\alpha, x))}{H_y(\alpha,x,G(c,\alpha, x))}.
$$
From our assumption, $H_x$ and $H_y$ are continuous functions that never vanish. Hence, they never change sign. This implies that $g_{c,\alpha}'$ never changes sign and $g_{c,\alpha}$ is strictly monotone. Furthermore, since $H_x,H_y$ are continuous functions on the compact set $Z$, there exists a $\eta>0$ such that our desired conclusion holds. 
\end{proof}

Using the containment lemma, our main theorem is the following:
\begin{theorem}\label{theorem-interiorH}
    Let $\Lambda, J_1,J_2$ be compact intervals,  $H\in C^1(\Lambda\times J_1\times J_2)$ with non vanishing partial derivatives $H_x$ and $H_y$, and $J_1\subset J_2$ and $\alpha_0$ be the center of $\Lambda$. Then for all Cantor sets $K_1\subset J_1$, there exists a Cantor set $K_2\subset J_2$ and a non-degenerate interval $I_{\widetilde{\epsilon}}(\alpha_0)$ such that 
    $$
    \left(\bigcap_{\alpha\in {\mathtt I}_{\widetilde{\epsilon}}(\alpha_0)} H(\alpha, K_1,K_2)\right)^{\circ}\ne \emptyset.
    $$
\end{theorem}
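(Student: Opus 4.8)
The plan is to reduce the statement, via the machinery already set up, to a single application of the one-dimensional containment lemma (Corollary \ref{cor-perturbation}, or a slight variant of it), combined with the derivative estimate in Lemma \ref{lemma-derivative} and the implicit-function-theorem package in Lemma \ref{lemma_2.4}. First I would fix a point $(u_1,u_2)\in(J_1\times J_2)^\circ$ and run the construction preceding Lemma \ref{lemma_2.4}: this produces $c_0=H(\alpha_0,u_1,u_2)$, the scales $\delta_0>\delta_1>0$, the set $Z$ as in (\ref{eq_N}), the implicit solution $G$ satisfying (\ref{eq_HG}), and the family of maps $g_{c,\alpha}$ from (\ref{eq-g}). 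By Lemma \ref{lemma-derivative} there is $\eta>0$ with $|g_{c,\alpha}'(x)|\ge\eta$ on $Z$, and each $g_{c,\alpha}$ is a $C^1$ diffeomorphism onto its image, strictly monotone. The goal, by Lemma \ref{lemma_2.4}, is to produce a Cantor set $K_2\subset J_2$ such that $g_{c,\alpha}(K_1)\cap K_2\ne\varnothing$ for every $(c,\alpha)$ in the small box ${\mathtt I}_{\delta_1}(c_0)\times{\mathtt I}_{\delta_1}(\alpha_0)$; then $\widetilde\epsilon=\delta_1$ works.

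The key point is that the family $\{g_{c,\alpha}(K_1)\}$ is a $C^1$-small perturbation of a single fixed Cantor set, so its gap structure is controlled uniformly. Concretely, I would first shrink $\delta_1$ if necessary (using uniform continuity of $H_x,H_y$, i.e. of the coefficients in Lemma \ref{lemma-derivative}) so that the maps $g_{c,\alpha}$ are uniformly close in $C^1$ to $g_{c_0,\alpha_0}=:g_0$ on ${\mathtt I}_{\delta_1}(u_1)\supset K_1$; write $L_0:=g_0(K_1)$, a Cantor set in the interior of ${\mathtt I}_{\delta_0}(u_2)$. Because $g_0$ is bi-Lipschitz on a neighborhood of $K_1$ (with constants $\eta$ and $\|H_x/H_y\|_\infty$), $L_0$ inherits a genuine binary representation $I_\sigma(L_0)=g_0(I_\sigma(K_1))$ in the sense of Section 2 (successively removing the images of the gaps of $K_1$, not necessarily the largest first — this is exactly the situation covered by Remark (2) after Lemma \ref{containment-lemma}), with gap lengths comparable, stage by stage, to those of $K_1$ up to the bi-Lipschitz factor. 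Similarly, for $(c,\alpha)$ in the small box, $g_{c,\alpha}(I_\sigma(K_1))$ is a perturbation of $g_0(I_\sigma(K_1))$ whose endpoints move by at most $o(1)$ as $\delta_1\to0$; hence for a suitable final choice of $\delta_1$ the convex hulls $I_\varnothing(g_{c,\alpha}(K_1))$ all lie inside a fixed compact interval $J_\ast\subset J_2^\circ$, and there are positive numbers $m_n$ with
\begin{equation}\label{eq_pert_gaps}
\min\{|U_\sigma(g_{c,\alpha}(K_1))|:\sigma\in\Sigma^n\}\ge m_n>0\qquad\forall n,\ \forall (c,\alpha)\in{\mathtt I}_{\delta_1}(c_0)\times{\mathtt I}_{\delta_1}(\alpha_0).
\end{equation}
Now apply the construction of a centrally symmetric Cantor set: pick $K_2\subset J_2$ with $I_\varnothing(K_2)\supset J_\ast$ and with $n$-th stage gap lengths $\ell_n<\tfrac12 m_n$ satisfying (\ref{eq_ell_n}). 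Then for every $(c,\alpha)$ the pair $\big(g_{c,\alpha}(K_1),\,K_2\big)$ satisfies the hypotheses of the containment lemma (Lemma \ref{containment-lemma}, in the form allowed by Remark (2)): $I_\varnothing(g_{c,\alpha}(K_1))\subset I_\varnothing(K_2)$ and the max gap of $K_2$ at stage $n$ is below the min gap of $g_{c,\alpha}(K_1)$ at stage $n$. Hence $g_{c,\alpha}(K_1)\cap K_2\ne\varnothing$ for all $(c,\alpha)$ in the box, and Lemma \ref{lemma_2.4} gives ${\mathtt I}_{\delta_1}(c_0)\subset\bigcap_{\alpha\in{\mathtt I}_{\delta_1}(\alpha_0)}H(\alpha,K_1,K_2)$, so the intersection has non-empty interior with $\widetilde\epsilon=\delta_1$.

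The step I expect to be the main obstacle is establishing (\ref{eq_pert_gaps}) cleanly: one must verify that the (infinitely many) stage-$n$ gaps of $g_{c,\alpha}(K_1)$ stay uniformly bounded below \emph{and} that the convex hulls stay uniformly inside $J_2^\circ$, uniformly over the whole box of parameters. This is where the bi-Lipschitz bounds from Lemma \ref{lemma-derivative} are essential — $|g_{c,\alpha}'|\ge\eta$ prevents gaps from collapsing, and $C^1$-closeness of $g_{c,\alpha}$ to $g_0$ on the fixed compact set ${\mathtt I}_{\delta_1}(u_1)$ controls endpoint displacement — but some care is needed because there is no single lower bound valid for \emph{all} $n$ simultaneously; rather one gets, for each fixed $n$, a bound $m_n$, and that is all Lemma \ref{containment-lemma} requires. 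A secondary, purely bookkeeping, obstacle is checking that $K_2$ can be chosen inside the prescribed interval $J_2$ while its convex hull still contains $J_\ast$ (this forces $J_\ast\subsetneq J_2$, which holds after possibly shrinking $\delta_0$ at the outset so that $J_\ast$, built from ${\mathtt I}_{\delta_0}(u_2)$-scale data, sits compactly in $J_2^\circ$), and that the gap lengths $\ell_n<\tfrac12 m_n$ are still compatible with (\ref{eq_ell_n}), which is automatic since (\ref{eq_ell_n}) only requires the $\ell_n$ to be small.
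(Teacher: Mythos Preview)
Your proposal is correct and follows essentially the same route as the paper: set up the implicit-function maps $g_{c,\alpha}$, use the uniform derivative bound $|g'_{c,\alpha}|\ge\eta$ from Lemma~\ref{lemma-derivative} together with the mean value theorem to obtain the uniform stage-$n$ gap lower bound $m_n=\eta\cdot\min_{\sigma\in\Sigma^n}|U_\sigma(K_1)|$ (this is exactly how the paper establishes your display \eqref{eq_pert_gaps}), build a centrally symmetric $K_2$ with smaller $n$-th stage gaps, and conclude via the containment lemma and Lemma~\ref{lemma_2.4}. One small technical point to tidy: you cannot assume ${\mathtt I}_{\delta_1}(u_1)\supset K_1$, since $\delta_1$ is dictated by the implicit function theorem and may be much smaller than the diameter of $K_1$; the paper fixes this by choosing $u_1\in K_1$ (a gap endpoint) and passing to the sub-Cantor set $\widetilde{K_1}=K_1\cap{\mathtt I}_{\delta_1}(u_1)$, which is harmless because $H(\alpha,K_1,K_2)\supset H(\alpha,\widetilde{K_1},K_2)$.
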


\begin{proof}
    Let $u_1$ be a right-endpoint of a gap of $K_1$ and choose $\delta_0$ such that ${\mathtt I}_{\delta_0}(u_1)\cap K_1$ is a non-degenerate Cantor set inside $J_1^{\circ}$ and (\ref{eq-interior-delta_0}) is fulfilled. Let $u_2 = u_1\in J_2$. We also set $c_0 = H(\alpha_0,u_1,u_2)$.   Using our setting of $H$, we can find a function $G:Z\to\R$ satisfying (\ref{eq_HG}) and $G(c_0,\alpha_0,u_1) = u_1$. Recall also that $Z$ is of the form (\ref{eq_N}) for some $\delta_1\in(0,\delta_0)$. By choosing $\delta_1$ smaller if necessary, we can assume that $\widetilde{K_1} = {\mathtt I}_{\delta_1}(u_1)\cap K_1$ is a  Cantor set inside $K_1\cap J_1^{\circ}.$

    \medskip
    
    Let $g_{c,\alpha}(x) = G(c,\alpha,x)$. Our goal is to construct $K_2\subset J_2$ such that the condition in Lemma \ref{lemma_2.4} holds for $\widetilde{K_1}$. We now represent $\widetilde{K_1}$ as 
    $$
    \widetilde{K_1} = \bigcap_{n=1}^{\infty} \bigcup_{\sigma\in \Sigma^n} I_{\sigma} (\widetilde{K_1})
    $$
and $U_{\sigma}(C)$ are the gaps of $\widetilde{K_1}$. For $n = 0,1,2,\cdots,$ the $n^{th}$ stage gaps are the collection of gaps $U_{\sigma}(\widetilde{K_1})$ where $\sigma \in \Sigma^n$. For all $(c,\alpha)\in {\mathtt I}_{\delta_1}(c_0)\times {\mathtt I}_{\delta_1}(\alpha_0)$, the collection of all the $n^{th}$ gaps of $g_{c,\alpha}(\widetilde{K})$ are 
$$
\{g_{c,\alpha}(U_{\sigma}(\widetilde{K_1})): \sigma\in \Sigma^n\}.
$$
We now claim that there exists $\eta_n>0$ such that for all $(c,\alpha)\in {\mathtt I}_{\delta_1}(c_0)\times {\mathtt I}_{\delta_1}(\alpha_0)$.
\begin{equation}\label{eq_claim_uniformgap}
    \min \{ |g_{c,\alpha}(U_{\sigma}(\widetilde{K_1}))|:\sigma\in \Sigma^n \}\ge \eta_n. \ 
\end{equation}
To see this claim, we let for a given $\sigma\in \Sigma^n$, $U_{\sigma} (\widetilde{K}_1) = (t_1,t_2)$. Then, by the mean value theorem, 
$$
|g_{c,\alpha}(U_{\sigma}(\widetilde{K_1}))| = |g_{c,\alpha}(t_2)-g_{c,\alpha}(t_1)| = |g_{c,\alpha}'(\zeta)| |t_2-t_1|.
$$
for some $\zeta\in (t_1,t_2)$. Using Lemma \ref{lemma-derivative}, 
$$
|g_{c,\alpha}(U_{\sigma}(\widetilde{K_1}))|\ge \eta \cdot|U_{\sigma}(\widetilde{K}_1)|.
$$
As $\eta$ is independent of $c$ and $\alpha$ and there are only finitely many $n^{th}$ stage gaps, we can take $\eta_n = \eta \cdot \min\{|U_{\sigma}(\widetilde{K}_1)|:\sigma\in \Sigma^n\}$. This justifies (\ref{eq_claim_uniformgap}). 

\medskip

Next, we note that all $g_{c,\alpha}(\widetilde{K_1})$ are inside ${\mathtt I}_{\delta_0}(u_1)$. We construct a centrally symmetric Cantor set $K_2$ whose convex hull is ${\mathtt I}_{\delta_0}(u_1)$ and the length of the $n^{th}$ stage gaps  are less than $\eta_n$. By the containment lemma (Lemma \ref{containment-lemma}), $g_{c,\alpha}(\widetilde{K}_1)\cap K_2\ne\emptyset$ for all $(c,\alpha)\in {\mathtt I}_{\delta_1}(c_0)\times {\mathtt I}_{\delta_1}(\alpha_0)$. In particular, the condition for Lemma \ref{lemma_2.4} is fulfilled. This completes the proof. 
\end{proof}

\section{Containment lemma in $\R^d$}
\subsection{Nested representation of Cantor sets.} In this section, we are going to generalize the containment lemma to higher dimensions. Since there is no canonical way to represent a Cantor set on $\R^d$, we will consider the following way to represent our Cantor sets.

\begin{definition}\label{nested}
  A {\bf nested representation} of a Cantor set $C$ is a countable family of compact connected sets  ${\mathcal R} = \{R_{\sigma}: \sigma\in \bigcup_{k=1}^{\infty}\Sigma^k\}$ defined inductively as follows:
    \begin{enumerate}
        \item $C\subset R_{\emptyset}$ and
        $$
        R_{\emptyset }\cap C  = (R_1\cap C)\cup\cdots (R_{m_{\emptyset}} \cap C)
        $$
        for some integer $m_{\emptyset}\ge 2$ where $R_1\cdots, R_{m_{\emptyset}}$ are disjoint sets contained in $R_{\emptyset}$. We will denote by          $\Sigma^1 = \{1,\cdots, m_{\emptyset}\}$
   \item Suppose that $\Sigma^k$ has been defined and $R_{\sigma}$ has been defined for all $\sigma\in \Sigma^k$. We define $R_{\sigma 1}\cdots R_{\sigma m_{\sigma}}$ be disjoint compact connected sets  contained in $R_{\sigma}$ such that 
   $$
   R_{\sigma}\cap C = (R_{\sigma 1}\cap C)\cup\cdots\cup (R_{\sigma m_{\sigma}}\cap C)
   $$
   and we define $\Sigma^{k+1} = \bigcup_{\sigma\in\Sigma^k}\{\sigma\}\times \{1,\cdots , m_{\sigma}\}$. 
   \item $\lim\limits_{k\to\infty} \max_{\sigma\in \Sigma^k} (\mbox{diam}(R_{\sigma})) =0.$ 
    \end{enumerate}
    In this nested representation, 
  \begin{equation}\label{eq_C_nested}
    C = \bigcap_{n=1}^{\infty} \bigcup_{\sigma\in\Sigma^n} R_{\sigma}.  
  \end{equation}
    Moreover, for all $x\in C$, there exists a unique sequence of compact connected sets $R_{\sigma_1}$, $R_{\sigma_1\sigma_2}$,$\cdots$ such that 
    $$
    \{x\} = \bigcap_{n=1}^{\infty} R_{\sigma_1\cdots {\sigma_n}}
    $$
and $\sigma_{i+1}\in\{1,\cdots, m_{\sigma_1\cdots\sigma_{i}}\}$ for all $i\ge1$.   \end{definition}

\medskip

We will adopt the following multi-index notations: Let $\Sigma^{\ast} = \bigcup_{k=1}^{\infty} \Sigma^k$ and $\Sigma^{\infty}$ be the collection of all infinite paths $\sigma_1\sigma_2\cdots$ where $\sigma_1\cdots\sigma_n\in\Sigma^n$ for all $n\in\N$.  For each $\sigma\in \Sigma^n$, we define 
$$
\Sigma^{k}(\sigma) = \{j_1\cdots j_k: 1\le j_1\le m_{\sigma}, \ 1\le j_r\le m_{\sigma j_1\cdots j_{r-1}}\}.
$$
We can naturally think $\Sigma^{\ast}$ as a tree starting from the vertex $\emptyset$ that branches out to $m_{\emptyset}$ many edges, where we label $\{1,\cdots m_{\sigma}\}$ as the vertices and each vertex $\sigma$ further branches out $m_{\sigma}$ many edges with $\sigma1,\cdots \sigma m_{\sigma}$ as the vertices. $\Sigma^{\infty}$ are the infinite paths in the tree. 

\begin{remark}
\begin{enumerate}
    \item For most of our study of Cantor sets,  $R_{\sigma}$ are chosen to be cubes or balls. However, we only require our $R_{\sigma}$ to be a compact connected set. 
    \item  Let $$
\mathcal{Q}_n=\{ [0, 2^{-n}]^d+2^{-n} t: t\in \Z^d \}
$$
be the set of all axis-parallel dyadic cubes of side length $2^{-n}$. For a given Cantor set $C\subset \R^d$, define
$$
C_n=\bigcup \{Q\in \mathcal{Q}_n: Q\cap C \neq \varnothing\}, \     
$$
Note that $C_n$ is a nested decreasing sequence of sets and  $C=\bigcap_{n=1}^\infty C_n.$ For each $C_n$, we can decompose it into its connected components. Then these connected components provide a natural nested representation for the Cantor set $C$. 
\end{enumerate}    
\end{remark}




\medskip

We will denote by $\pi_j: \R^d\to \R$ the orthogonal projection onto the $x_j$-axis. For compact subsets $A, B\subset \R^d$, define
    \begin{align*}
        d_{\min}(A,B)=\min_{j=1,\cdots,d} d\left(\pi_j (A), \pi_j (B)\right).
    \end{align*}
    where $d(E,F) = \min\{|x-y|: x\in E, y\in F\}$. $d_{\min}(A,B)>0$ means that the two sets cannot overlap in any axis directions. 

\medskip


\medskip

\begin{definition}
    Let $C$ be a Cantor set and $C=\bigcap_{n=1}^\infty \bigcup_{\sigma\in \Sigma^n} R_{\sigma}$ be a nested  representation of $C$. Let also $\Sigma_d^n = \{1,\cdots d+1\}\times \cdots\times \{1,\cdots d+1\}$ ($n$ times) and $\Sigma_d^{\ast} = \bigcup_{n=1}^{\infty}\Sigma_d^n$.
   
    \medskip
    
    \noindent\textit{(1)} We say that $R_{\sigma}$ is {\bf non-degenerate} if there exists $k\ge 1$ and there exists $\{A_1, \cdots, A_{d+1}\}\subset \{R_{\sigma\sigma'}: \sigma'\in \Sigma^k(\sigma)\}$ such that 
    \begin{equation}\label{eq-dmin}
    d_{\min} \left( A_i, A_j \right) >0
    \end{equation}
    for all $i\neq j\in \{1,\cdots, d+1\}$. Otherwise, $R_{\sigma}$ will be called {\bf degenerate}.  We will call $\{A_i\}$ the {\bf non-degenerate connected components} in $R_{\sigma}$

    \medskip
    
    \noindent\textit{(2)} $C$ is \textbf{non-degenerate} if there exists a sub-Cantor set $\widehat{C}$ generated by a sub-collection $$\{{\mathtt R}_{\sigma}: \sigma\in \Sigma_d^{\ast} \}$$ in ${\mathcal R}$ and it can be nest-represented as 
\begin{equation}\label{eqhatC}
\widehat{C} = \bigcap_{n=1}^{\infty} \bigcup_{\sigma\in \Sigma_{d}^n} {\mathtt R}_{\sigma}     
\end{equation}
and $\{{\mathtt R}_{\sigma 1},\cdots,{\mathtt R}_{\sigma (d+1)} \}$ are  the non-degenerate components of ${\mathtt R}_{\sigma}$ for all $\sigma\in \Sigma_d^{n}$ and $n\ge 1$.  
\end{definition}



\subsection{Containment lemma for non-degenerate Cantor sets.} In this section, we will prove the containment lemma under the assumption that $C$ is non-degenerate. 

\medskip

\begin{theorem} (Containment Lemma on $\R^d$)
\label{containmentlemma-Rd}
Let $C$ be a  non-degenerate Cantor set in $\R^d$ that contains a sub-Cantor set $\widehat{C}$ as in (\ref{eqhatC}). Let 
\begin{equation}\label{eqd_k}
d_k = \min_{\sigma\in \Sigma_d^k} \left\{ d_{\min} \left({\mathtt R}_{\sigma p}, {\mathtt R}_{\sigma q}\right): 1\leq p<q\leq d+1 \right\}.
\end{equation}
Define a Cantor set $K = \overbrace{K_0 \times \cdots\times K_0}^{d\textup{-times}}\subset \R^d$ such that 
\begin{itemize}
  \item \textsf{(K0).} $I$ is any closed interval such that $\text{conv} (K)=I\times \cdots \times I \supset \text{conv}(C)$.
    \item \textsf{(K1).} $K_0$ is a centrally symmetric Cantor set on $\R^1$ obtained  from the closed interval $I$ (which we determine in \textsf{(K0)}) and removing an open interval  of length $g_k<d_k$, in each remaining closed interval from the center at the $k$-th stage.
\end{itemize}
Then $C\cap K\ne \emptyset$.
\end{theorem}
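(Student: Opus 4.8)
The plan is to mimic the one-dimensional containment lemma, but work along the sub-Cantor set $\widehat{C}$ with its $(d+1)$-fold branching at each level, against the $d$-fold product Cantor set $K = K_0 \times \cdots \times K_0$. The key combinatorial observation is a pigeonhole: among $d+1$ pieces $\mathtt{R}_{\sigma 1}, \dots, \mathtt{R}_{\sigma(d+1)}$ of $\widehat C$ that are pairwise $d_{\min}$-separated, and a single ``gap hyperplane slab'' of $K$ at stage $k$ (which is a union of $d$ axis-aligned slabs, one per coordinate, each of width $g_k < d_k$), at least one of the $d+1$ pieces must avoid all $d$ slabs entirely — hence lie in a single child-cube of $K$ at that stage. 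This is the exact higher-dimensional analogue of the step ``$d-c < b-a$ forces $a<c$ or $d<b$'' in Lemma \ref{containment-lemma}.

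More precisely, I would prove by induction the following claim: for each $n$, there exist $\sigma_n \in \Sigma_d^n$ (forming a descending chain of children) and a cube $Q_n$ which is an $n$-th stage child-cube of $K$ (a product of $n$-th stage intervals of $K_0$) such that $\mathtt{R}_{\sigma_n} \subset Q_n$. The base case is \textsf{(K0)}: $\mathrm{conv}(\widehat C) \subset \mathrm{conv}(C) \subset \mathrm{conv}(K) = Q_0$. For the inductive step, suppose $\mathtt{R}_{\sigma_n} \subset Q_n$. Passing to the $k$-th descendant level used in the non-degeneracy data, consider the $d+1$ pairwise-separated pieces $\mathtt{R}_{\sigma_n 1}, \dots, \mathtt{R}_{\sigma_n(d+1)}$. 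Inside $Q_n$, the next stage of $K$ subdivides each coordinate interval by removing a central open interval of length $g_k < d_k$; this deletes from $Q_n$ a union of $d$ open slabs $S_1, \dots, S_d$ with $S_j$ normal to the $j$-th axis and $\pi_j(S_j)$ an interval of length $g_k$. Now fix a coordinate $j$. The projections $\pi_j(\mathtt{R}_{\sigma_n 1}), \dots, \pi_j(\mathtt{R}_{\sigma_n(d+1)})$ are intervals in $\pi_j(Q_n)$, pairwise at distance $\ge d_k > g_k$; since $\pi_j(S_j)$ has length $g_k$, it can meet at most one of these $d+1$ projections. Letting $i_j$ be the index of that unique piece (or any index if $S_j$ meets none), the set $\{i_1, \dots, i_d\}$ has at most $d$ elements, so some index $i^* \in \{1, \dots, d+1\}$ is omitted: $\mathtt{R}_{\sigma_n i^*}$ meets none of $S_1, \dots, S_d$. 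Hence $\mathtt{R}_{\sigma_n i^*}$, being connected and contained in $Q_n \setminus (S_1 \cup \cdots \cup S_d)$, lies in a single connected component of $Q_n \setminus (S_1 \cup \cdots \cup S_d)$, which is precisely one $(n{+}1)$-st stage child-cube $Q_{n+1}$ of $K$. Relabel $\sigma_{n+1} := \sigma_n i^*$ (in the $\Sigma_d^{n+1}$ indexing). This completes the induction. Then, as in Lemma \ref{containment-lemma}, picking $x_n \in \widehat C \cap \mathtt{R}_{\sigma_n} \subset Q_n$ and $y_n \in K \cap Q_n$ gives $|x_n - y_n| \le \mathrm{diam}(Q_n) \to 0$ (using condition (3) of the nested representation together with $g_k \to 0$, or just $\mathrm{diam}(Q_n)\to 0$ since the $K_0$-intervals shrink), and compactness of $C$ and $K$ yields a common point, so $C \cap K \supseteq \widehat C \cap K \ne \varnothing$.

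The main obstacle is being careful about two bookkeeping issues: first, the non-degeneracy data refers to a descendant level $k = k(\sigma)$ that may vary with $\sigma$, so the ``stage'' of $K$ at which we subdivide must be matched to this $k$ at each step — one must index the stages of $K_0$ accordingly (this is exactly what the hypothesis ``removing a gap of length $g_k < d_k$ in each remaining interval at the $k$-th stage'' is set up to accommodate, so it costs only notational vigilance). Second, one must verify that a connected subset of $Q_n$ disjoint from all the removed central slabs really does sit inside one product child-cube of $K$ — this follows because $Q_n \setminus \bigcup_j S_j$ is a disjoint union of $2^d$ product boxes and connectedness forbids a connected set from spanning two of them. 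Neither difficulty is deep; the proof is essentially the $\mathbb{R}^1$ argument plus one pigeonhole over the $d+1$ branches, which is the reason the branching number $d+1$ (rather than $2$) is forced in the definition of non-degeneracy.
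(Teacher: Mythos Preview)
Your proof is correct and follows essentially the same route as the paper's: induct on the claim $\mathtt{R}_{\sigma_n}\subset Q_n$, and at each step use the pigeonhole that each of the $d$ gap-slabs meets at most one of the $d+1$ pairwise-separated pieces (the paper phrases the same pigeonhole contrapositively---``if every piece hits a slab, two hit the same one''---but the content is identical). Your bookkeeping worry about a varying descendant depth $k(\sigma)$ is unnecessary: by the definition of $\widehat C$ in (\ref{eqhatC}), the immediate children $\mathtt{R}_{\sigma 1},\dots,\mathtt{R}_{\sigma(d+1)}$ are already the non-degenerate components, so the levels of $\widehat C$ and the stages of $K_0$ line up one-for-one.
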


\begin{proof} For the centrally symmetric Cantor set $K_0\subset \R^1$, as in the previous section, we can write it as
$$
K_0 = \bigcap_{n=1}^{\infty} \bigcup_{\sigma\in \Sigma_2^n} I_{\sigma}(K_0)
$$
where $\Sigma_2 = \{0,1\}$ and $I_{\sigma}(K_0)$ has the same length for all $\sigma\in\Sigma_2^n.$  We write $K$ as
\begin{align*}
    K=\overbrace{K_0 \times \cdots\times K_0}^{d\textup{-times}}= \bigcap_{n=1}^\infty \bigcup_{j=1}^{2^{nd}} S_{n,j}.
\end{align*}
where $S_{n,j}$ are cubes of the form $ I_{\sigma_1}(K_0)\times\cdots I_{\sigma_d}(K_0)$ with $(\sigma_1,\cdots,\sigma_d)\in \Sigma_2^n\times\cdots\times\Sigma_2^n$, so there are $2^{nd}$ such cubes.  To complete the proof, it suffices to establish the following claim which allows us to find a sequence of points from $C$ and $K$ converging to the intersections:

\medskip

\noindent \textit{\textbf{Claim.}} For all $n\in\N$, there exists $\sigma\in\Sigma_d^n$ and $S_{n,j}$ such that ${\mathtt R}_{\sigma}\subset S_{n,j}$.

\medskip

\noindent\textit{\textbf{Proof of claim.}} We proceed the proof by induction. The base case holds automatically by \textsf{(K0)} since ${\mathtt R}_{\emptyset}\subset S_{\emptyset} = I\times...\times I$. Suppose that we have already constructed ${\mathtt R}_{\sigma}\subset S_{n,j}$ for some $\sigma\in \Sigma_d^n$ and $n\ge 1$. Let us write ${\mathtt S} = S_{n,j} = \prod_{k=1}^d[a_k,b_k]$ where $b_k-a_k$ are all the same length. By the requirement of centrally symmetric Cantor sets, ${\mathtt S}$ is split up into $2^d$ many cubes  ${\mathtt S}_1,\cdots {\mathtt S}_{2^d}$ on the corner so that 
$$
{\mathtt S}\setminus({\mathtt S}_1\cup\cdots\cup {\mathtt S}_{2^d}) = \bigcup_{k=1}^d  G_k, 
$$
where 
$$
G_k = [a_1,b_1]\times\cdots\times\left[\frac{a_k+b_k}{2}-\frac{g_{n+1}}{2},\frac{a_k+b_k}{2}+\frac{g_{n+1}}{2}\right]\times\cdots \times [a_d,b_d].
$$
Note that  $G_k$ is a slit of thickness $g_{n+1}$ centered at $x_k = \frac{a_k+b_k}{2}$ and is parallel to the $x_k$ coordinate plane. 

\medskip

We  claim that there exists ${\mathtt R}_{\sigma p}$ and ${\mathtt S}_{q}$ such that ${\mathtt R}_{\sigma p}\subset {\mathtt S}_q$. Suppose the claim is false. Then we have for all $i\in \{1,\cdots, d+1\}$, ${\mathtt R}_{\sigma i}\cap  {\mathtt S}\setminus  {\mathtt S}_j\ne\emptyset$ for all $j = 1,\cdots 2^d$. Hence, ${\mathtt R}_{\sigma i}$ intersects ${\mathtt S}\setminus({\mathtt S}_1\cup\cdots\cup {\mathtt S}_{2^d}) = \bigcup_{k=1}^d  G_k$. Therefore, there exists $k = k(i)$ such that ${\mathtt R}_{\sigma i}\cap G_k\ne\emptyset$. By pigeonhole principle, there exists $i\ne i'$ such that ${\mathtt R}_{\sigma i}$ and ${\mathtt R}_{\sigma i'}$ both intersects at the same $G_k$. However, this implies that 
$$
d_{\min} ({\mathtt R}_{\sigma i}, {\mathtt R}_{\sigma i'}) < g_{n+1} <d_{n+1}
$$
by \textsf{(K1)}. This is a contradiction to the definition of $d_{n+1}$ in (\ref{eqd_k}). This justifies the claim and hence completes the proof.


\end{proof}

\begin{remark}
\begin{enumerate}

\item We remark that the Cantor set $K$ that we constructed will have positive Lebesgue measure since we can choose $g_k$ arbitrarily small. However, due to the centrally symmetric Cantor set construction, it still has no interior. 

\item Not all Cantor sets are non-degenerate. For example, if we embed the middle-third Cantor set into an axis parallel line on $\R^2$, then no matter how we generate a nested representation, we would not be able to find a set of three connected components that satisfy (\ref{eq-dmin}), since these components eventually converges to the line.  On the other hand, if we rotate the Cantor set so that it is no longer parallel to the axes, it will be non-degenerate. We will prove in Section \ref{section-deg} that {\it all Cantor sets are non-degenerate after some orthogonal transformations} (See Theorem \ref{lemma-degenerate4}). Indeed, we can show that all degenerate Cantor sets must lie in countably many axe-parallel hyperplanes. 
\end{enumerate}
\end{remark}

\medskip

\section{Uniformly non-degeneracy and topology of $C^1$ functions}\label{section-topology}
We are interested in determining if the intersection of Cantor sets in the containment lemma is stable under small perturbation of $C^1$ functions. Indeed, the containment lemma is stable if $C$ is perturbed only by scaling and translation using the same argument in Corollary \ref{cor-perturbation}.  To make it work in all $C^1$ perturbation, we need some stronger assumption on the non-degeneracy and we need to endow $C^1$ functions with topology of local uniform convergence. We will define it in this section.

\subsection{Uniformly non-degenerate Cantor sets}

\begin{definition}
    We say that a Cantor set $K$ in $\R^d$ is \textbf{uniformly non-degenerate} ({\bf u.n.d.} in short) if  $K$ contains a sub-Cantor set $\widehat{K}$ whose nested representation $\widehat{K}=\bigcap_{n=1}^\infty \bigcup_{\sigma \in \Sigma_d^n} \mathtt{R}_\sigma$  such that there exist constants $C_1,C_2>0$ satisfying the following property: for all  $\sigma \in \Sigma_d^{\ast}$ and $(p,q,j)\in\{1,\cdots, d+1\}\times\{1,\cdots, d+1\}\times \{1,\cdots, d\}$,  
    \begin{align}
    \label{comparable}
        C_1 \|x-x'\| \leq |\pi_j (x-x')| \leq C_2 \|x-x'\| \quad  \forall x\in \mathtt{R}_{\sigma p} 
 \ \mbox{and} \ x'\in \mathtt{R}_{\sigma q}.
    \end{align}
\end{definition}

Notice that (\ref{comparable}) is also equivalent to saying that $|\pi_j(x-x')|$ and $|\pi_k(x-x')|$ are comparable to each other for all $j\ne k$ and for all $\sigma\in\Sigma_d^{\ast}$. Intuitively, a Cantor set cannot be lying up arbitrarily close to some axes-parallel hyperplanes. Therefore, after a suitable orthogonal transformation, the Cantor set  should be uniformly non-degenerate. In the next theorem, we will see that   this is the case.  

\begin{theorem}\label{thm-degenerate4}
    Let $K\subset {\R}^{d}$ be a Cantor set.  Then there exists an orthogonal linear transformation ${\mathsf O}$ on $\R^d$ such that the image ${\mathsf O}(K)$ is {\bf u.n.d.} on $\R^d$. 
\end{theorem}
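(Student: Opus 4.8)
The plan is to reformulate uniform non-degeneracy as a statement purely about \emph{directions}, and then to realize the required sub-Cantor set inside a generic rotation of $K$.

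\emph{Step 1: a reformulation.} Given a sub-Cantor set $\widehat K=\bigcap_n\bigcup_{\sigma\in\Sigma_d^n}\mathtt R_\sigma$ as in the definition of uniform non-degeneracy, condition \eqref{comparable} says exactly that every ``cross-difference'' $x-x'$ with $x\in\mathtt R_{\sigma p}$, $x'\in\mathtt R_{\sigma q}$, $p\ne q$ (nonzero since distinct children are disjoint), has direction $u=(x-x')/\|x-x'\|$ lying in the fixed compact set $\mathcal D(C_1,C_2)=\{u\in S^{d-1}:C_1\le|u_j|\le C_2\text{ for all }j\}$, which is nonempty precisely when $C_1\le 1/\sqrt d\le C_2$. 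Since, in any nested representation, a child $\mathtt R_{\sigma p}$ can be made arbitrarily small relative to the mutual distances of the ``anchor points'' of the $d+1$ children by first descending further, it suffices to produce at each node $d+1$ anchor points of $K$ whose $\binom{d+1}{2}$ pairwise directions lie in $\operatorname{int}\mathcal D$ and then replace them by sufficiently small closed balls centred at them; descending further also forces $\max_\sigma\operatorname{diam}(\mathtt R_\sigma)\to0$ and guarantees the limit set is a Cantor set contained in the rotated $K$. Thus the theorem reduces to: there is a fixed $\mathcal D=\mathcal D(C_1,C_2)$, an orthogonal $\mathsf O$, and an open ball $B_0$ with $\mathsf O(K)\cap B_0\ne\varnothing$, such that every nonempty relatively open sub-piece of $\mathsf O(K)\cap B_0$ contains two distinct points whose direction lies in $\operatorname{int}\mathcal D$.

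\emph{Step 2: choosing $\mathsf O$.} First I would isolate a ``robust'' piece of $K$: among all balls $B$ with $K\cap B\ne\varnothing$, take one for which $k:=\dim\operatorname{aff}(K\cap B)$ is minimal; then every smaller ball $B'$ with $K\cap B'\ne\varnothing$ has $\operatorname{aff}(K\cap B')=\operatorname{aff}(K\cap B)$ (inclusion gives $\subseteq$, minimality forbids a drop in dimension), so after a translation $K\cap B$ and all its sub-pieces span one fixed $k$-dimensional linear subspace $W$. Because $\pi_j$ restricted to $\mathsf O(W)$ has a $(k-1)$-dimensional kernel, one \emph{cannot} make \emph{all} directions of $\mathsf O(W)$ avoid the coordinate hyperplanes when $k\ge2$, so one can only ask that the directions actually \emph{attained} by pairs in $K\cap B$ land in $\operatorname{int}\mathcal D$. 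Here I would run a Baire-category argument in the compact group $O(d)$: enumerate the countably many dyadic cubes $Q$ with $K\cap Q$ infinite, and for each let $\mathcal B_Q\subseteq O(d)$ be the (closed) set of $\mathsf O$ for which the closure of the direction set of $\mathsf O(K\cap Q)$ misses $\operatorname{int}\mathcal D$. If $Q$ is not essentially contained in an affine line, $\mathcal B_Q$ is nowhere dense, since an open set of rotations cannot push a genuinely multi-dimensional compact direction set entirely into the complement $S^{d-1}\setminus\operatorname{int}\mathcal D$; the nearly collinear cubes are not controlled this way, but we never have to descend into them — it is enough that \emph{one} branch survives — and this is exactly where a quantitative degeneracy dichotomy enters: a degenerate Cantor set is confined to countably many axis-parallel hyperplanes (the analogue of Theorem \ref{lemma-degenerate4}), so inside the robust piece $K\cap B$ the non-degenerate behaviour is dense, and for $\mathsf O$ outside a meagre set one extracts the ball $B_0$ promised in Step 1.

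\emph{Step 3: building $\widehat K$.} Given such $\mathsf O$ and $B_0$, set $C=\mathsf O(K)\cap B_0$ and build the tree node by node, starting from a closed ball $\mathtt R_\varnothing\subseteq B_0$ whose interior meets $\mathsf O(K)$. At a node $\mathtt R_\sigma$, with $C_\sigma=\mathsf O(K)\cap\mathtt R_\sigma$ a sub-piece of $C$, select $d+1$ anchors inductively: start with a pair $y_1,y_2\in C_\sigma$ of direction in $\operatorname{int}\mathcal D$ (available by the property of $B_0$ applied to the interior of $\mathtt R_\sigma$); having placed $y_1,\dots,y_m$, apply the property of $B_0$ to a tiny neighbourhood of $y_1$ to get a pair $a,b$ there with direction in $\operatorname{int}\mathcal D$, replace $y_1$ by $a$ and set $y_{m+1}:=b$ — choosing the neighbourhood small enough that every previously secured pairwise direction (each $y_i-y_j$, $i<j\le m$) remains in the open set $\operatorname{int}\mathcal D$ after the perturbation. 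Then let $\mathtt R_{\sigma1},\dots,\mathtt R_{\sigma(d+1)}$ be disjoint closed balls centred at $y_1,\dots,y_{d+1}$, small enough that every cross-direction still lies in $\mathcal D$ and $\operatorname{diam}<1/|\sigma|$; each again meets $\mathsf O(K)$ in a sub-piece of $C$ in its interior, so the construction continues. This produces a nested representation of a Cantor set $\widehat K\subseteq\mathsf O(K)$ witnessing uniform non-degeneracy with the constants $C_1,C_2$.

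\emph{Main obstacle.} The crux is Step 2: manufacturing a \emph{single} orthogonal map that simultaneously defeats the coordinate-hyperplane degeneracies at \emph{every} scale with \emph{uniform} constants $C_1,C_2$. One scale is elementary transversality, but the scales are infinite in number, so one must either package things via Baire category — which forces a careful proof that the $\mathcal B_Q$ are nowhere dense, the delicate case being the nearly collinear cubes — or invoke the structural fact that degenerate Cantor sets live in countably many axis-parallel hyperplanes, so a rotation in general position clears all of them at once. This is the substance of Section 7, and it is where essentially all the difficulty lies.
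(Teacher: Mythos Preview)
Your reformulation in Step~1 is correct and useful, and the iterative anchor-point construction in Step~3 is a clean way to upgrade ``every sub-piece contains one good pair'' to ``every node carries $d+1$ pairwise-good anchors''. The gap is entirely in Step~2, and it is a genuine one, not just a missing detail.

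The Baire category argument does not go through as written. The sets $\mathcal B_Q$ can have nonempty interior in $O(d)$ even when $K\cap Q$ is not contained in an affine line: any piece whose direction set happens to be a short arc (or small cap) of $S^{d-1}$ --- which is perfectly compatible with $K\cap Q$ affinely spanning $\R^d$ --- can be rotated by an open set of $\mathsf O$'s entirely into $S^{d-1}\setminus\operatorname{int}\mathcal D$. Your workaround (``we never have to descend into the nearly collinear cubes'') does not help, because the choice of which branch to follow in Step~3 depends on $\mathsf O$, while the Baire argument requires fixing the countable family of cubes \emph{before} choosing $\mathsf O$; and once $\mathsf O$ is fixed you must succeed in \emph{every} child $\mathtt R_{\sigma i}$ you have already committed to, not merely in one of them. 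Invoking the Section~6 dichotomy (degenerate Cantor sets live in finitely many axis-parallel hyperplanes) does not supply the needed \emph{uniform} constants either: that result is qualitative and concerns non-degeneracy, not $\kappa$-uniform non-degeneracy.

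The paper's proof in Section~7 avoids genericity entirely and is quite different from what you sketch. It fixes in advance $\binom{d+1}{2}+1$ orthogonal maps whose associated degeneracy cones in $S^{d-1}$ are pairwise disjoint, and then runs a finite pigeonhole: if $K$ is not u.n.d., Lemma~\ref{lemma-und2} produces a sub-Cantor set $\widehat K$ in which, at every node, some pair of the $d+1$ children is $\kappa$-degenerate for the standard basis; if $\mathsf O_1(\widehat K)$ is also not u.n.d., one refines to $\widehat K_1$ with a degenerate pair for $\mathsf O_1$ as well, and the cone disjointness forces this to be a \emph{different} pair at each node. Iterating through the list exhausts all $\binom{d+1}{2}$ pairs, so the last map has no pair left to be degenerate on and must render the resulting sub-Cantor set u.n.d. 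This combinatorial exhaustion is exactly what replaces your missing Step~2.
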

 The proof however requires us some careful writing, we will postpone it in Section \ref{section-non-und} to ensure the continuity of proving our main theorems. 

\medskip

\subsection{Topology of $C^1(\R^d)$.} We let $\textsf{GL}_d (\R)$ be the set of all $d\times d$ invertible matrices $A$. If $T\in \textsf{GL}_d (\R)$, we write $(T_{ij})_{i,j=1}^d$ to be the matrix representation of $T$ in the standard basis of $\R^d$. the   Recall that the {\bf operator norm} and the {\bf Frobenius norm} are defined by 
$$
\|T\| = \|T\|_{op} = \sup_{x\ne 0} \frac{\|Tx\|}{\|x\|}, \ \|T\|_{F} = \left(\sum_{i,j = 1}^d|T_{ij}|^2\right)^{1/2}.
$$
Using the singular value decomposition, $\|T\|$ is the maximum of the singular value of $T$, while $\|T\|_{F}$ is the  square root of the sum of the square of all singular values (see e.g. \cite{HJ-book}). Hence, we know that 
$$
\frac{1}{\sqrt{d}}\|T\|_F \le \|T\|\le \|T\|_F.
$$
We will write 
\begin{align*}
    B_\delta (A)=\{T\in \textsf{GL}_d (\R) : \|T-A\|_{F} <\delta\}.
\end{align*}

The following lemma ensures the invertibility in a neighborhood of Frobenius norm. 
\begin{lemma}\label{lemma-inver}
    Let $T_0\in {\mathsf GL}_d(\R)$ and $T:\R^d\to \R^d$ be a linear map. Then there exists $\delta>0$ such that $\|T-T_0\|_{F}<\delta$ implies $T\in{\mathsf GL}_d(\R)$.
 \end{lemma}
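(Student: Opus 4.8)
The plan is to use the fact that $\mathsf{GL}_d(\R)$ is an open subset of the space $M_d(\R)$ of all $d\times d$ matrices, where we identify $M_d(\R)$ with $\R^{d^2}$ endowed with the Frobenius norm. The key point is that invertibility is detected by the determinant function $\det: M_d(\R)\to\R$, which is a polynomial in the $d^2$ matrix entries, hence continuous with respect to the Frobenius norm. Since $T_0\in\mathsf{GL}_d(\R)$, we have $\det(T_0)\ne 0$; by continuity of $\det$, the preimage of the open set $\R\setminus\{0\}$ is open, so there is a Frobenius-norm ball around $T_0$ on which $\det$ does not vanish, i.e. which is contained in $\mathsf{GL}_d(\R)$.

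Concretely, first I would fix $T_0$ and set $c = |\det(T_0)| > 0$. Then I would invoke continuity of $\det$ at $T_0$: there exists $\delta>0$ such that $\|T-T_0\|_F<\delta$ implies $|\det(T)-\det(T_0)|<c$, and therefore $|\det(T)|>0$, so $T$ is invertible. This $\delta$ is exactly what the lemma asks for.

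Alternatively, one can give a quantitative argument using the Neumann series: if $\|T-T_0\|_F<\delta$ then $\|T_0^{-1}(T-T_0)\|\le \|T_0^{-1}\|\,\|T-T_0\|_F < \|T_0^{-1}\|\,\delta$, so choosing $\delta = 1/(2\|T_0^{-1}\|)$ makes $\|T_0^{-1}(T-T_0)\|<1/2<1$; then $T = T_0(I + T_0^{-1}(T-T_0))$ is a product of two invertible maps (the second being invertible since $I$ plus an operator of norm less than $1$ is invertible via its Neumann series), hence $T\in\mathsf{GL}_d(\R)$. This version has the advantage of exhibiting an explicit $\delta$ and giving a bound on $\|T^{-1}\|$ as well, which may be convenient later.

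There is no real obstacle here; the statement is essentially the openness of $\mathsf{GL}_d(\R)$, a standard fact. The only mild point of care is the interplay between the operator norm (natural for the Neumann-series argument) and the Frobenius norm (in which the lemma is phrased), which is handled by the already-recorded inequality $\tfrac{1}{\sqrt d}\|T\|_F\le\|T\|\le\|T\|_F$. I would present the continuity-of-determinant argument as the primary proof for brevity, perhaps remarking on the Neumann-series alternative if an explicit $\delta$ is wanted downstream.
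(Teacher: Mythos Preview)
Your proposal is correct; both the determinant-continuity argument and the Neumann-series argument are valid proofs of the openness of $\mathsf{GL}_d(\R)$. The paper, however, takes a slightly different and more elementary route: it uses that $T_0\in\mathsf{GL}_d(\R)$ guarantees a constant $c>0$ with $\|T_0x\|\ge c\|x\|$ for all $x$, and then the triangle inequality gives $\|Tx\|\ge\|T_0x\|-\|(T-T_0)x\|\ge (c-\delta)\|x\|$, so $\delta<c$ forces $T$ to be injective (hence invertible in finite dimensions). This avoids appealing to the determinant or to Neumann series altogether. Your determinant argument is the cleanest conceptually but gives no explicit $\delta$; your Neumann-series alternative is close in spirit to the paper's proof (since $c=\|T_0^{-1}\|^{-1}$ is essentially the same threshold) and has the bonus of bounding $\|T^{-1}\|$, which could be handy later, though the paper does not actually use such a bound. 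Any of the three would serve perfectly well here.
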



\begin{proof}
    We know that  $T_0\in {\mathsf GL}_d(\R)$ implies that we can always find $c>0$ such that $\|T_0 x\|\ge c\|x\|$ for all $x\in\R^d$. Note that 
    $$
    \|Tx\| \ge \|T_0x\|-\|(T-T_0)x\|\ge c\|x\|- \|T-T_0\|_F\|x\| \ge (c-\delta)\|x\|.
    $$
    Hence, if we choose $\delta<c$, then $T$ is invertible. 
\end{proof}

Let $g: \R^d\to \R^d$ and let $J_g$ be the Jacobian matrix of $g$. In other words,   if we write $g(x) = (g_1(x),\cdots, g_d(x))$, where $g_j$ are its component functions, we have
$$
J_g(x) = \begin{bmatrix}
    \frac{\partial g_1}{\partial x_1} & \cdots & \frac{\partial g_1}{\partial x_d}\\ & \cdots & \\ \frac{\partial g_d}{\partial x_1} & \cdots & \frac{\partial g_d}{\partial x_d}\\
\end{bmatrix}.
$$
We aim at proving the following theorem for uniformly non-degenerate Cantor sets. With a slight abuse of notation, we define 
$$
C^1(\R^d) = \left\{g: \R^d\to \R^d: \frac{\partial g_i}{\partial x_j} \in C(\R^d) \ \forall i,j\in\{1,2\cdots, d\}\right\}.
$$
 We define also 
$$
C^1_{\mathsf{inv}}(\R^d)= \{g\in C^1(\R^d): J_g(x)\in \mathsf {GL}_d(\R) \ \forall x\in \R^d\}.
$$ 
$C^1_{\mathsf{inv}}(\R^d)$ are usually referred as local diffeomorphisms.  Clearly, if $g(x) = Tx+t$ for some $T\in\mathsf{GL}_d(\R)$ is an affine transformation, then $J_g(x) = T$ and it belongs to  $C^1_{\mathsf{inv}}(\R^d)$. Before we begin the proof, we need to endow $C^1(\R^d)$ with the metric of local uniform convergence. To do this, we write $\R^d = \bigcup_{n=1}^{\infty}{R_n}$ where $R_n$ is an increasing union of compact subsets. Define 
\begin{equation}\label{eq_gninfty}
\begin{array}{lll}
\|g\|_{n,\infty} &=& \max_{x\in R_n}|g(x)|+ \left(\sum_{i,j=1}^d \left(\max_{x\in R_n}\left|\frac{\partial g_j}{\partial x_i}\right|\right)^2\right)^{1/2}  \\
&=& \max_{x\in R_n}|g(x)|+ \left\|\left(\max_{x\in R_N}\left|\frac{\partial g_j}{\partial x_i}\right|\right)_{1\le i,j\le d}\right\|_F  
\end{array}
\end{equation}
and for each $f,g\in C^1(\R^d)$, define
$$
{\mathsf d}(f,g) = \sum_{n=1}^{\infty} 2^{-n} \frac{\|f-g\|_{n,\infty}}{1+\|f-g\|_{n,\infty}}.
$$
It is a routine check that $(C^1(\R^d),{\mathsf d})$ is a metric space. The ball under this metric will be denoted by $\mathsf{B}^{{\mathsf d}}_f(\delta): = \{g\in C^1(\R^d): {\mathsf d}(f,g)<\delta\}$. Apart from the balls in the topology, we need the following sets: For each $f\in C^1(\R^d)$, $x_0\in\R^d$ and $\delta\in(0,1)$, we define
$$
{\mathcal G}_{f,x_0,\delta}:= \{g\in C^1(\R^d): \|J_g(x_0)-J_f(x_0)\|_F<\delta \ \mbox{and}  \ |g(x_0)-f(x_0)|<\delta\}
$$
and for each $\eta>0$,
$$
{\mathcal G}_{f,x_0,\delta,\eta} = {\mathcal G}_{f,x_0,\delta}\cap \{g\in C^1(\R^d): |x-x_0|<\eta \ \Longrightarrow \ \|J_g(x)-J_g(x_0)\|_F<\delta\}.
$$
We also let 
$$
\Delta (f,x_0,\delta) = \sup\{\eta>0:|x-x_0|<\eta \ \Longrightarrow \ \|J_f(x)-J_f(x_0)\|_F<\delta \}.
$$
Note that since the map $x \rightarrow \|J_f(x)-J_f(x_0)\|_F$ is continuous, $\Delta (f,x_0,\delta)>0$.


\begin{lemma}\label{lemma_ball-inclusion}
Let $K\subset \R^d$ be a compact set and let $x_0\in K$. Then there exists $N_K\in\N$ such that for all $f\in C^1(\R^d)$ and $0<\delta<2^{-N_K-1}$,
$$
\mathsf{B}^{\mathsf{d}}_{\delta} (f) \subset  {\mathcal G}_{f, x_0, c_K\cdot \delta},
$$
where $c_K = 2^{N_K+1}$. Moreover, for all $\eta\le \Delta (f,x_0,\delta)$, 
$$
\mathsf{B}^{\mathsf{d}}_{\delta} (f) \subset  {\mathcal G}_{f, x_0, (2c_K+1)\cdot \delta, \eta}
$$
\end{lemma}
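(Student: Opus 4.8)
The statement is essentially an unwinding of the definition of the metric $\mathsf{d}$, so the proof is a matter of bookkeeping with the series defining $\mathsf{d}$ and with the definition of $\|\cdot\|_{n,\infty}$. First I would fix the compact set $K$ and the point $x_0\in K$. Since $\R^d=\bigcup_{n}R_n$ with $R_n$ an increasing union of compact sets, compactness of $K$ gives an index $N_K\in\N$ with $K\subset R_{N_K}$, hence $x_0\in R_{N_K}$. This is the only place compactness of $K$ is used.

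\textbf{From a small $\mathsf{d}$-ball to control of the $N_K$-th seminorm.} Suppose ${\mathsf d}(f,g)<\delta$ with $0<\delta<2^{-N_K-1}$. From the definition
$$
{\mathsf d}(f,g) = \sum_{n=1}^{\infty} 2^{-n} \frac{\|f-g\|_{n,\infty}}{1+\|f-g\|_{n,\infty}}
$$
and positivity of all terms, the single term with $n=N_K$ satisfies $2^{-N_K}\frac{\|f-g\|_{N_K,\infty}}{1+\|f-g\|_{N_K,\infty}}<\delta$, i.e. $\frac{\|f-g\|_{N_K,\infty}}{1+\|f-g\|_{N_K,\infty}}<2^{N_K}\delta<\tfrac12$. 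The real function $t\mapsto t/(1+t)$ is increasing, and $t/(1+t)<1/2$ forces $t<1$, on which range $t/(1+t)\ge t/2$; hence $\|f-g\|_{N_K,\infty}<2\cdot 2^{N_K}\delta = c_K\delta$ with $c_K=2^{N_K+1}$. Now by the definition of $\|\cdot\|_{N_K,\infty}$ in (\ref{eq_gninfty}), both summands are bounded by $\|f-g\|_{N_K,\infty}$ separately (they are both nonnegative): this gives $\max_{x\in R_{N_K}}|f(x)-g(x)|<c_K\delta$ and $\big\|(\max_{x\in R_{N_K}}|\partial(f_j-g_j)/\partial x_i|)_{i,j}\big\|_F<c_K\delta$. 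Evaluating at $x_0\in R_{N_K}$ and using that $|(J_g-J_f)(x_0)|_{ij}\le \max_{x\in R_{N_K}}|\partial(f_j-g_j)/\partial x_i|$ entrywise, so $\|J_g(x_0)-J_f(x_0)\|_F$ is dominated by the Frobenius norm of the max-matrix, we conclude $\|J_g(x_0)-J_f(x_0)\|_F<c_K\delta$ and $|g(x_0)-f(x_0)|<c_K\delta$. This is exactly $g\in{\mathcal G}_{f,x_0,c_K\delta}$, proving the first inclusion.

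\textbf{The equicontinuity refinement.} For the second inclusion, take $g\in\mathsf{B}^{\mathsf d}_\delta(f)$ and $\eta\le\Delta(f,x_0,\delta)$. We already have $g\in{\mathcal G}_{f,x_0,c_K\delta}\subset{\mathcal G}_{f,x_0,(2c_K+1)\delta}$. It remains to check the extra condition in ${\mathcal G}_{f,x_0,(2c_K+1)\delta,\eta}$: for $|x-x_0|<\eta$, estimate $\|J_g(x)-J_g(x_0)\|_F$ by the triangle inequality through $J_f(x)$ and $J_f(x_0)$,
$$
\|J_g(x)-J_g(x_0)\|_F \le \|J_g(x)-J_f(x)\|_F + \|J_f(x)-J_f(x_0)\|_F + \|J_f(x_0)-J_g(x_0)\|_F.
$$
The middle term is $<\delta$ by the definition of $\Delta(f,x_0,\delta)$ since $|x-x_0|<\eta\le\Delta(f,x_0,\delta)$. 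The first and third terms require controlling $\|J_g-J_f\|_F$ at the points $x$ and $x_0$; here the subtlety is that $x$ need not lie in $R_{N_K}$. I would handle this by enlarging the index: since $x_0\in K$ and $|x-x_0|<\eta$, the closed $\eta$-neighborhood $K'$ of $K$ is compact, so there is $N'\ge N_K$ with $K'\subset R_{N'}$; running the same single-term argument with $N'$ in place of $N_K$ would give a bound $c_{K'}\delta$ — but $c_{K'}$ depends on $\eta$, which is circular. The clean fix, and the step I expect to be the only genuine friction point, is to instead note that we only need the crude bound $\|J_g(x)-J_f(x)\|_F<\delta$ for $x$ in a fixed compact neighborhood; but $\Delta(f,x_0,\delta)$ can be shrunk at will. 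So I would first fix $\eta_0\le 1$ with $K'=\{x:d(x,K)\le\eta_0\}\subset R_{M}$ for some $M\ge N_K$ (independent of everything), replace $c_K$ by $c_K':=2^{M+1}$, and restrict to $\eta\le\min(\eta_0,\Delta(f,x_0,\delta))$; then the $N'=M$ version of the single-term argument gives $\|J_g(y)-J_f(y)\|_F<c_K'\delta$ for all $y\in R_M\supset K'$, in particular at $y=x$ and $y=x_0$. Summing: $\|J_g(x)-J_g(x_0)\|_F< c_K'\delta+\delta+c_K'\delta=(2c_K'+1)\delta$. If the paper intends $c_K$ to already be chosen as $2^{M+1}$ with $M$ absorbing this neighborhood, the stated constant $(2c_K+1)$ is exactly what comes out; I would align the definition of $N_K$ at the start of the proof to be this slightly larger index (it costs nothing since $N_K$ only needs $K\subset R_{N_K}$, and enlarging it only weakens the hypothesis $\delta<2^{-N_K-1}$, which is fine because the lemma is applied with whatever $\delta$ is small enough). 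With that choice, both inclusions follow, completing the proof.
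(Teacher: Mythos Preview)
Your argument for the first inclusion is exactly the paper's: pick $N_K$ with $K\subset R_{N_K}$, extract the single $n=N_K$ term from the series for $\mathsf d$, and invert $t\mapsto t/(1+t)$ under the hypothesis $2^{N_K}\delta<\tfrac12$ to get $\|f-g\|_{N_K,\infty}<c_K\delta$, which bounds both $|f(x_0)-g(x_0)|$ and $\|J_f(x_0)-J_g(x_0)\|_F$.

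For the second inclusion the paper does precisely the triangle inequality you wrote, bounding the middle term by $\delta$ via the definition of $\Delta(f,x_0,\delta)$ and the two outer terms by $c_K\delta$ each. The paper, however, simply asserts $\|J_g(x)-J_f(x)\|_F<c_K\delta$ at the point $x$ without checking that $x\in R_{N_K}$; you are right that this is a genuine loose end, since $|x-x_0|<\eta$ does not by itself place $x$ in $R_{N_K}$. Your fix---choosing $N_K$ from the outset so that the closed $1$-neighborhood of $K$ lies in $R_{N_K}$, at the harmless cost of reading the conclusion for $\eta\le\min(1,\Delta(f,x_0,\delta))$---is the correct patch, and it is consistent with how the lemma is actually applied later (only for small $\eta$). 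So your approach coincides with the paper's, with the added merit that you make explicit a step the paper glosses over.
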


\begin{proof}
    Let $N_K$ be such that $K\subset R_{N_K}$ and let $c_K = 2^{N_K+1}$. If $g\in {\mathsf{B}}^{\mathsf{d}}_{\delta}(f)$, then $d(f,g)<\delta$, which implies that 
    $$
    \|f-g\|_{N_K,\infty} < 2^{N_K} \delta \cdot (1+\|f-g\|_{N_K,\infty}).
    $$
    Hence,  $\|f-g\|_{N_K,\infty}<\frac{2^{N_K}\delta}{1-2^{N_K}\delta}$. As $\delta< 2^{-N_K-1}$, this implies that 
    $$
    \|f-g\|_{N_K,\infty}< 2^{N_{K}+1}\delta.
$$
But since $K\subset R_{N_K}$, from  (\ref{eq_gninfty}), it follows immediately that  $|f(x)-g(x)|<2^{N_{K}+1}\delta$ and $\|J_g(x)-J_f(x)\|_F<2^{N_{K}+1}\delta$ for all $x\in K$, completing the proof of the first part.

For the second part,  we notice that if $\eta<\Delta(f,x_0,\delta)$, then $|x-x_0|<\eta$ implies $\|J_f(x)-J_f(x_0)\|_F<\delta$. Since we already proved that $\|J_g(x)-J_f(x)\|_F<2^{N_{K}+1}\delta$ for all $x\in K$, by a triangle inequality, if $|x-x_0|<\eta$, then
 $$
 \begin{aligned}
 \|J_g(x)-J_g(x_0)\|_F\le & \|J_g(x)-J_f(x)\|_F+  \|J_f(x)-J_f(x_0)\|_F+ \|J_f(x_0)-J_g(x_0)\|_F \\
 \le &(2c_K+1)\delta.
  \end{aligned}
 $$
 This shows the last inclusion. 
\end{proof}



\begin{lemma}\label{lemma_separable}
$(C^1(\R^d),\mathsf{d})$ is a separable metric space and is a Lindel\"{o}f space. i.e. every open cover has a countable sub-cover. 
\end{lemma}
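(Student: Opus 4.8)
The plan is to exhibit an explicit countable dense subset of $(C^1(\R^d),\mathsf{d})$; once $(C^1(\R^d),\mathsf{d})$ is known to be a separable metric space, second countability and hence the Lindel\"of property come for free. The candidate dense set is $\mathcal{P}$, the collection of all polynomial maps $p\colon\R^d\to\R^d$ whose coefficients are rational; this is countable. To prove density I would first reduce it to an approximation statement on a single compact set. Fix $g\in C^1(\R^d)$ and $\varepsilon>0$. Since the $R_n$ are increasing, $\|h\|_{n,\infty}\le\|h\|_{N,\infty}$ for all $n\le N$ and all $h\in C^1(\R^d)$; using in addition $\frac{t}{1+t}\le\min\{t,1\}$ for $t\ge 0$ and $\sum_{n\ge 1}2^{-n}=1$, one gets for every $p$
\[
\mathsf{d}(g,p)=\sum_{n=1}^{N}2^{-n}\frac{\|g-p\|_{n,\infty}}{1+\|g-p\|_{n,\infty}}+\sum_{n>N}2^{-n}\frac{\|g-p\|_{n,\infty}}{1+\|g-p\|_{n,\infty}}\;\le\;\|g-p\|_{N,\infty}+2^{-N}.
\]
Choosing $N$ with $2^{-N}<\varepsilon/2$, it suffices to produce $p\in\mathcal{P}$ with $\|g-p\|_{N,\infty}<\varepsilon/2$, i.e.\ with $\sup_{R_N}|g-p|$ and all $\sup_{R_N}|\partial_j g_i-\partial_j p_i|$ as small as desired.

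The substantive step is then simultaneous $C^1$ polynomial approximation on a compact set. I would enclose $R_N$ in a closed cube $Q$ and invoke the fact that a $C^1$ map on $Q$ can be approximated, together with its first-order partials and uniformly on $Q$, by polynomial maps with real coefficients --- e.g.\ via the multivariate Bernstein operators $B_m(g)$ (for which $B_m(g)\to g$ and $\partial_j B_m(g)\to\partial_j g$ uniformly on $Q$ when $g\in C^1$), or, in a self-contained way, by multiplying $g$ by a $C^1$ cutoff that equals $1$ near $Q$ and is supported in a larger cube, convolving with a narrow Gaussian to obtain an \emph{entire} map that is $C^1$-close to $g$ on $Q$, and truncating its power series (which converges locally uniformly together with all derivatives). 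Having such a polynomial $q$, one replaces each of its finitely many coefficients by a sufficiently close rational; since the value and first derivatives of a polynomial on the fixed compact set $R_N$ depend continuously on its coefficients, this yields $p\in\mathcal{P}$ with $\|g-p\|_{N,\infty}<\varepsilon/2$, hence $\mathsf{d}(g,p)<\varepsilon$. Thus $\mathcal{P}$ is dense and $(C^1(\R^d),\mathsf{d})$ is separable. (Equivalently, one can observe that $g\mapsto\big((g_i)_i,(\partial_j g_i)_{i,j}\big)$ is, by the very definition of $\mathsf{d}$, an isometric embedding of $(C^1(\R^d),\mathsf{d})$ into $C(\R^d,\R^{d+d^2})$ with the metric of local uniform convergence, which is separable by Stone--Weierstrass applied componentwise; a subspace of a separable metric space is separable.)

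Finally, to get the Lindel\"of property, I would use the standard implications for metric spaces: separable $\Rightarrow$ second countable, since the $\mathsf{d}$-balls $\{g:\mathsf{d}(g,p)<r\}$ with $p\in\mathcal{P}$ and $r\in\Q$, $r>0$, form a countable base; and second countable $\Rightarrow$ Lindel\"of, since given any open cover, for each point one selects a basic open set containing it and contained in some member of the cover, obtaining countably many distinct basic sets, and then one member of the cover per basic set. Hence $(C^1(\R^d),\mathsf{d})$ is Lindel\"of. The only non-formal ingredient in the whole argument is the simultaneous $C^1$ polynomial approximation on a cube; this is classical and I would either cite it or establish it through the entire-function route sketched above, with everything else being bookkeeping with the defining series of $\mathsf{d}$ and elementary point-set topology.
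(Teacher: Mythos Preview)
Your proof is correct and is exactly what the paper leaves implicit: the paper's own proof of this lemma simply states that separability is ``a routine exercise'' and then cites Willard's textbook for the fact that separable metric spaces are Lindel\"of. One minor quibble: the map $g\mapsto\big((g_i)_i,(\partial_j g_i)_{i,j}\big)$ into $C(\R^d,\R^{d+d^2})$ is a topological embedding rather than a literal isometry (the seminorms $\|\cdot\|_{n,\infty}$ and the pulled-back sup-norms are equivalent, not equal), but this does not affect the separability conclusion.
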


\begin{proof}
    The separability is a routine exercise and it is known that a separable metric space must be a  Lindel\"{o}f space \cite[Theorem 16.9]{willard2012general}.
\end{proof}



\medskip

\section{Proof of the main theorems}

We will prove Theorem \ref{main-theorem} and Theorem \ref{robust-linear-general-intro} in this section. Let us begin with the robustness of intersection around the identity map (Theorem \ref{robust-linear}) and then around the general maps (Theorem \ref{robust-linear-general}). 

\begin{theorem}[]
\label{robust-linear}
    Let $K$ be a {\bf u.n.d.} Cantor set in $\R^d$ with the sub-Cantor set $\widehat{K}=\bigcap_{n=1}^\infty \bigcup_{\sigma \in \Sigma_d^n} \mathtt{R}_\sigma$ satisfying (\ref{comparable}) with constant $C_1,C_2>0$ and let $x_0\in \widehat{K}$. Then, there exist $\delta >0$ such that  for all $\eta>0$, there exists a Cantor set $\widetilde{K} = \widetilde{K}(\delta,\eta) \subset \R^d$ such that for all $g\in {\mathcal G}_{I,  x_0,  \delta,\eta}$ ($I$ denotes the identity mapping),
    \begin{align*}
        g(K)\cap \widetilde{K} \neq \varnothing.
    \end{align*}
\end{theorem}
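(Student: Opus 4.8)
The plan is to reduce the statement to the $\R^d$ containment lemma (Theorem~\ref{containmentlemma-Rd}) applied to the set $g(K)$, using the fact that $g$ is close to the identity on a controlled scale so that it distorts the separation data $d_k$ of the nested representation only by a bounded factor. First I would fix $x_0\in\widehat K$ and a dyadic scale on which all the sets ${\mathtt R}_\sigma$ of the distinguished sub-Cantor set $\widehat K=\bigcap_n\bigcup_{\sigma\in\Sigma_d^n}{\mathtt R}_\sigma$ live; since $\widehat K$ is compact, everything happens inside some compact $R_{N}$, and I may choose $\delta$ small and then, for each $\eta$, work with maps $g\in{\mathcal G}_{I,x_0,\delta,\eta}$. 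The point of the class ${\mathcal G}_{I,x_0,\delta,\eta}$ is that it controls $J_g$ near $x_0$: $\|J_g(x)-I\|_F<C\delta$ for $|x-x_0|<\eta$. So on any ball of radius $<\eta$ around $x_0$, $g$ is a bi-Lipschitz perturbation of the identity with constants arbitrarily close to $1$ (depending on $\delta$), hence $g$ maps the part of $\widehat K$ inside that ball to a set that is still non-degenerate and whose pairwise projection-distances $d_{\min}({\mathtt R}_{\sigma p},{\mathtt R}_{\sigma q})$ shrink by at most a factor depending only on $\delta$ and the $\R^d$-geometry.

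The key steps, in order, would be: (i) Use compactness and the definition of ${\mathcal G}_{I,x_0,\delta,\eta}$ to show that for $|x-x_0|<\eta$, $g$ satisfies $(1-c\delta)|x-y|\le|g(x)-g(y)|\le(1+c\delta)|x-y|$ and, more importantly, $|\pi_j(g(x)-g(y))|\ge(1-c'\delta)|\pi_j(x-y)|$ for every coordinate $j$ — this is where the uniform non-degeneracy constants $C_1,C_2$ of $\widehat K$ enter: because $|\pi_j(x-y)|\ge C_1\|x-y\|$ for $x,y$ in distinct children, the perturbation coming from $J_g-I$ (which has Frobenius norm $<c\delta$) cannot kill the $j$-th coordinate separation provided $\delta$ is small relative to $C_1$. (ii) Pass to a sub-Cantor set of $\widehat K$ contained in $B(x_0,\eta)\cap\widehat K$ (shrinking to a tail of the nested representation — legitimate since $x_0\in\widehat K$ and diameters go to $0$), so that $g$ restricted there is the bi-Lipschitz-in-every-coordinate map from step (i); conclude that $g$ applied to this sub-Cantor set is again non-degenerate, with explicit separation numbers $d_k' \ge \theta d_k$ for some $\theta=\theta(\delta,d)>0$ that does not depend on $\eta$ or on $g$. (iii) Apply Theorem~\ref{containmentlemma-Rd} to the non-degenerate Cantor set $g(\widehat K\cap B(x_0,\eta))$: build $\widetilde K = K_0\times\cdots\times K_0$, a centrally symmetric product Cantor set whose convex hull contains the convex hull of this image set (its convex hull is contained in $g(B(x_0,\eta))$, which is itself contained in a fixed cube) and whose $k$-th stage gap lengths $g_k$ are chosen $<d_k'\le\theta d_k$; then $g(\widehat K\cap B(x_0,\eta))\cap\widetilde K\ne\varnothing$, hence $g(K)\cap\widetilde K\ne\varnothing$. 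Since $\theta$ is independent of $g\in{\mathcal G}_{I,x_0,\delta,\eta}$ and the construction of $\widetilde K$ only uses the numbers $d_k$ of $\widehat K$ (and $\eta$, through the size of the ambient cube), the \emph{same} $\widetilde K$ works for all such $g$.

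I expect the main obstacle to be step (ii)–(iii) bookkeeping: one must be careful that the non-degenerate sub-collection $\{{\mathtt R}_{\sigma p}\}$ used to invoke the containment lemma for $g(K)$ is built from the \emph{image} of a genuine nested representation and that its separation numbers are bounded below \emph{uniformly in $g$}, rather than depending on the individual map. The subtlety is that $g$ need not be globally injective or a diffeomorphism — it is only a local one, and the hypothesis ${\mathcal G}_{I,x_0,\delta,\eta}$ only constrains $J_g$ on the $\eta$-ball around $x_0$; so I must confine the whole argument to that ball, which forces the passage to a tail of the nested representation of $\widehat K$ near $x_0$. Establishing the coordinatewise bi-Lipschitz bound $|\pi_j(g(x)-g(y))|\asymp|\pi_j(x-y)|$ (not just $\|g(x)-g(y)\|\asymp\|x-y\|$) from a bound on $\|J_g-I\|_F$ together with the comparability~(\ref{comparable}) is the crux; once that is in hand, the reduction to Theorem~\ref{containmentlemma-Rd} is mechanical.
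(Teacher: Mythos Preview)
Your proposal is correct and follows essentially the same approach as the paper: pass to a tail of the nested representation of $\widehat K$ inside $B_\eta(x_0)$, use the mean value theorem together with the entrywise bounds on $J_g$ and the comparability~(\ref{comparable}) to obtain the coordinatewise lower bound $|\pi_j(g(x)-g(x'))|\ge\lambda|\pi_j(x-x')|$ for $x,x'$ in distinct children (this is exactly the paper's computation \eqref{eq:ineq2}--\eqref{eq:ineq3}), deduce a uniform lower bound $b_n(g)\ge\lambda a_n$ on the separation data of $g(\widehat K)$, bound the convex hull of $g(\widehat K)$ uniformly in $g$, and then invoke Theorem~\ref{containmentlemma-Rd} with gap lengths chosen below $\lambda a_n$. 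The obstacles you anticipate are precisely the ones the paper addresses, and your identification of the coordinatewise bi-Lipschitz estimate as the crux is exactly right.
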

\begin{proof}

By Lemma \ref{lemma-inver}, there exists $\delta_0>0$ such that for all linear maps $T:\R^d\to \R^d$, $\|T-I\|_F < \delta_0$ implies $T\in \textsf{GL}_d(\R)$.
Then, choose $\delta=\delta(C_1, C_2, d)$ such that 
\begin{align}\label{delta}
     0 < \delta < \min \left\{ \frac{1}{2(1+C_1C_2^{-1}(d-1))}, \delta_0 \right\}
\end{align}
and choose $\lambda=\lambda(C_1,C_2,d,\delta)$ such that
\begin{align}\label{lambda}
    0<\lambda< 1-2(1+C_1C_2^{-1}(d-1))\delta.
\end{align}
Let  $g\in{\mathcal G}_{I,x_0,\delta,\eta}$. Then $\|J_g(x_0)-I\|_F<\delta$, $|g(x_0)-x_0|<\delta$ and for all $|x-x_0|<\eta$, we have $\|J_g(x)-J_g(x_0)\|_F<\delta$. Therefore,
\begin{align*}
   \|J_g(x)-I\|_F \leq \| J_g (x) - J_g (x_0) \|_F + \|J_g (x_0) - I \|_F <  2\delta.
\end{align*}
Let $T_{ij}(x) = \frac{\partial g_i(x)}{\partial x_j}$. From the definition of Frobenius norm,  for each $j$ and $i\neq j$ in $\{1, 2, \cdots, d\}$,
\begin{align}\label{Tbound}
    T_{jj}(x) \in (1-2\delta, 1+2\delta) \text{ and } T_{ji}(x) \in (-2\delta, 2\delta)
\end{align}
whenever $|x-x_0|<\eta$. 


\medskip

We now focus on $x_0\in \widehat{K}$. We can find a $\sigma_1\sigma_2\cdots\in \Sigma_d^{\infty}$ such that $x_0\in \bigcap_{n=1}^{\infty}{\mathtt R}_{\sigma_1\cdots\sigma_n}$. Hence, there exists ${\mathtt R}_{\sigma}$ such that ${\mathtt R}_{\sigma}\subset B_{\eta}(x_0)$. The sub-Cantor set $\mathtt{R}_{\sigma}\cap \widehat{K}$ is also uniformly non-degenerate satisfying (\ref{comparable}) with constants $C_1,C_2$.  For simplicity of notation, we may just assume $\widehat{K}\subset B_{\eta}(x_0)$. If not, we will replace $\mathtt{R}_{\sigma}\cap \widehat{K}$ by $\widehat{K}$.

\medskip

Note that for each $g\in \mathcal{G}_{I, x_0, \delta}$, $g(\widehat{K})=\bigcap_{n=1}^\infty \bigcup_{\sigma \in \Sigma_d^n} g\left( \mathtt{R}_\sigma \right)$ is a nested representation of the Cantor set $g(\widehat{K})$. Note that $g(\widehat{K})$ is a Cantor set since $g\in \textsf{GL}_d(\R)$. For each $n\in \N$, define
\begin{align*}
a_n &= \min_{\sigma\in \Sigma_d^n} \left\{ d_{\min} \left({\mathtt{R}}_{\sigma p}, {\mathtt{R}}_{\sigma q}\right): 1\leq p<q\leq d+1 \right\},\\
b_n (g) &= \min_{\sigma\in \Sigma_d^n} \left\{ d_{\min} \left({g(\mathtt{R}_{\sigma p})}, {g(\mathtt{R}_{\sigma q})}\right): 1\leq p<q\leq d+1 \right\}.
\end{align*}

Let $n\in \N$, $\sigma \in \Sigma_d^n$, and $(p, q, j)\in \{1,\cdots, d+1\}\times \{1,\cdots d+1\}\times\{1,\cdots, d\}$ Then, for all $x = (x_1,\cdots x_d)\in \mathtt{R}_{\sigma p}$, $x' = (x_1',\cdots, x_d') \in \mathtt{R}_{\sigma q}$, using mean value theorem, we can find $\zeta_k$ such that 
\begin{flalign}
    && | g_j(x)-g_j(x')|  &= \left| \sum_{k=1}^d T_{jk}(\zeta_k) \cdot (x_k-x_k
    ') \right| \nonumber \\
    && & \label{eq:ineq2} \geq  |T_{jj} (\zeta_j)||x_j-x_j'|-\sum_{k\neq j} |T_{jk}(\zeta_k)| |x_k-x_k' |  \\
    && & \label{eq:ineq3} \geq (1-2\delta)|x_j-x_j'| - 2(d-1)\delta C_1C_2^{-1} |x_j-x_j'|  \\
    && &= \left(1-2(1+C_1C_2^{-1}(d-1))\delta \right)|x_j-x_j'| \nonumber \\
    && &> \lambda |x_j-x_j'| \quad && (\text{by } \eqref{lambda}), \nonumber
\end{flalign}
where in \eqref{eq:ineq2}, we used the reverse triangle inequality, and in \eqref{eq:ineq3}, we used \eqref{Tbound} and the fact that for all $k\ne j$
\begin{align*}
    |\pi_k (x-x')|\ge C_1 \|x-x'\|\ge \frac{C_1}{C_2} |\pi_j (x-x')|,
\end{align*}
which follows from our assumption \eqref{comparable}. Taking infimum over all $x\in \mathtt{R}_{\sigma p}, x'\in \mathtt{R}_{\sigma q}$, minimum over $j\in \{1, 2, \cdots, d\}$, minimum over integers $p, q$ such that $1\leq p<q \leq d+1$, and minimum over $\sigma \in \Sigma_d^n$ in order, we have
\begin{align*}
b_n (g) \geq \lambda a_n, \quad \forall n\in \N \quad \forall g\in \mathcal{G}_{I, x_0, \delta}.
\end{align*}

Next, we  claim that there exists $C = C(d)$ such that for all $g\in \mathcal{G}_{I, x_0, \delta,\eta}$,
\begin{equation}\label{eq_bound-hull}
|g(x)-y|\le C, \ \forall x,y\in \widehat{K}.    
\end{equation}
Indeed, using the mean value theorem, there exists $\zeta_{ij}\in B_{\eta}(x_0)$ such that 
$$
 g(x)-g(x_0) = A (x-x_0)
$$
where $A$ is the $d\times d$ matrix $\left(T_{ij}(\zeta_{ij})\right)_{1\le i,j\le d}$. Hence, 
$$
|g(x)-g(x_0)| \le \|A\|_F\cdot \eta.
$$
Using (\ref{Tbound}), $\|A\|_F \le (d(1+\delta)^2+d(d-1)\delta)^{1/2}< \sqrt{4d+d^2}: = C'$. Therefore, recalling that $\widehat{K}\subset B_{\eta}(x_0)$, we have
$$
|g(x)-y| \le |g(x)-g(x_0)|+ |x_0-y| \le (C'+1)\eta< C'+1
$$
for all $x,y\in \widehat{K}$.  The proof of (\ref{eq_bound-hull}) is complete. Consequently, for all $g\in \mathcal{G}_{I, x_0, \delta,\eta}$, $g(\widehat{K})\subset \{x: d(x,\widehat{K})<C\}$,  the $C-$neighborhood of $\widehat{K}$. 

\medskip

To finish the proof, we construct $\widetilde{K}$ to be a centrally symmetric Cantor set in $\R^d$ whose convex hull strictly contains the $C$-neighborhood of $\widehat{K}$. Then this convex hull contains all $g(\widehat{K})$ with  $J_g(x_0)\in B_{\delta}(I)$ and $g(x_0)\in B_{\delta}(x_0)$. Then we require the $n^{th}$ stage gaps of $\widetilde{K}$ to be given by $\frac{\lambda a_n}{2}$. Then, the condition of the containment lemma in $\R^d$ (Theorem \ref{containmentlemma-Rd}) is satisfied by the pairs $(g(\widehat{K}), \widetilde{K})$ for all $g$ under our condition, implying that
    \begin{align*}
        g(K)\cap \widetilde{K} \neq \varnothing. 
    \end{align*}
\end{proof}

We notice that there is no need to be in a neighborhood of the identity transformation, we can actually begin with any $g_0\in C^1(\R^d).$

\begin{theorem}
\label{robust-linear-general}
    Let $g_0\in C^1(\R^d)$ and  $K$ be a Cantor set in $\R^d$ such that $g_0(K)$ is a Cantor set that is {\bf u.n.d.} with the sub-Cantor set $g_0(\widehat{K})$ and let $x_0\in \widehat{K}$ such that $J_{g_0}(x_0)\in \mathsf{GL}_d(\R)$. Then, there exist $\delta >0$ such that for all sufficiently small $\eta>0$, one can find  a Cantor set $\widetilde{K} \subset \R^d$ such that for all $g\in{\mathcal G}_{g_0,x_0,\delta,\eta}$, 
    \begin{align*}
        g(K)\cap \widetilde{K} \neq \varnothing.
    \end{align*}
\end{theorem}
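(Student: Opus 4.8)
The plan is to reduce Theorem~\ref{robust-linear-general} to Theorem~\ref{robust-linear} by composing $g$ with a local inverse of $g_0$. Since $J_{g_0}(x_0)\in\mathsf{GL}_d(\R)$, the inverse function theorem furnishes open neighborhoods $U\ni x_0$ and $V\ni y_0:=g_0(x_0)$ such that $g_0|_U\colon U\to V$ is a $C^1$-diffeomorphism; let $\varphi:=(g_0|_U)^{-1}$, a $C^1$ map on $V$ with $J_\varphi(y_0)=J_{g_0}(x_0)^{-1}$. For $g$ sufficiently $C^1$-close to $g_0$ near $x_0$, put $h:=g\circ\varphi$ on $V$. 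Then $g=h\circ g_0$ on $U$, so near $x_0$ the set $g(K)$ is the image under $h$ of a piece of the u.n.d.\ Cantor set $g_0(K)$, and the idea is to apply Theorem~\ref{robust-linear} to $g_0(K)$ --- with its u.n.d.\ sub-Cantor set $g_0(\widehat K)$ and constants $C_1,C_2$ --- at the base point $y_0\in g_0(\widehat K)$, taking $h$ as the perturbation of the identity.

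First I would check that $h$ lies in the required ${\mathcal G}$-neighborhood of $I$. One has $h(y_0)=g(x_0)$, hence $|h(y_0)-y_0|=|g(x_0)-g_0(x_0)|$; and by the chain rule $J_h(y_0)=J_g(x_0)\,J_{g_0}(x_0)^{-1}$, so that
\[
\|J_h(y_0)-I\|_F=\bigl\|\bigl(J_g(x_0)-J_{g_0}(x_0)\bigr)\,J_{g_0}(x_0)^{-1}\bigr\|_F\le\|J_g(x_0)-J_{g_0}(x_0)\|_F\,\|J_{g_0}(x_0)^{-1}\|_F .
\]
For the Jacobian oscillation, $J_h(y)=J_g(\varphi(y))\,J_\varphi(y)$; since $\varphi$ is Lipschitz near $y_0$ it carries a small ball about $y_0$ into a ball about $x_0$ on which $g\in{\mathcal G}_{g_0,x_0,\delta,\eta}$ gives $\|J_g(\varphi(y))-J_g(x_0)\|_F<\delta$ and $\|J_g(\varphi(y))\|_F\le\|J_{g_0}(x_0)\|_F+2\delta$, and combining this with the continuity of $J_\varphi$ and submultiplicativity of the Frobenius norm makes $\|J_h(y)-J_h(y_0)\|_F$ as small as wanted. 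Consequently, if $\delta'$ is the constant produced by Theorem~\ref{robust-linear} for the Cantor set $g_0(K)$ at $y_0$, then for a suitable choice of $\delta$ and all sufficiently small $\eta>0$ there is $\eta'>0$ with $h\in{\mathcal G}_{I,\,y_0,\,\delta',\,\eta'}$ for \emph{every} $g\in{\mathcal G}_{g_0,x_0,\delta,\eta}$.

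Theorem~\ref{robust-linear} now supplies a single Cantor set $\widetilde K=\widetilde K(\delta',\eta')$ with $h(g_0(K))\cap\widetilde K\neq\varnothing$ for all such $h$. Although $h$ is defined only on $V$, the proof of Theorem~\ref{robust-linear} uses the perturbation only on a small ball about the base point --- it passes to the clopen piece $\mathtt{R}_\sigma\cap g_0(\widehat K)$ lying in that ball and carries out all estimates there --- so it applies verbatim to the locally defined $h$, and the intersection point it produces lies in $h(\mathtt{R}_\sigma\cap g_0(\widehat K))=g\bigl(\varphi(\mathtt{R}_\sigma\cap g_0(\widehat K))\bigr)$. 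Shrinking $U$ at the outset (equivalently, replacing $\widehat K$ by the clopen piece $\widehat K\cap U$, whose $g_0$-image is still u.n.d.\ with the same constants, and taking the nested cell $\mathtt{R}_\sigma\subset V$ small enough), one may assume $\varphi(\mathtt{R}_\sigma\cap g_0(\widehat K))\subset\widehat K\subset K$, whence $g(K)\cap\widetilde K\neq\varnothing$.

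The step I expect to be most delicate is this last identification: $g_0$ need not be globally injective on $K$, so one must be careful that the intersection point --- which naturally lives in the $g_0(K)$-picture --- pulls back to a genuine point of $K$ and not to a spurious $\varphi$-preimage; this is what forces the localization to a diffeomorphism neighborhood of $x_0$ together with the shrinking of the nested cell. The remaining points --- keeping the constants relating the $C^1$-closeness of $g$ to $g_0$ and of $h$ to $I$ uniform over the relevant ball, and verifying that the proof of Theorem~\ref{robust-linear} is indeed local in the perturbation --- are routine applications of the chain rule, the inverse function theorem, and the already-established Theorem~\ref{robust-linear}.
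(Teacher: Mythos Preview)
Your proposal is correct and follows essentially the same route as the paper: both invert $g_0$ locally via the inverse function theorem, set $h=g\circ g_0^{-1}$, verify that $h\in\mathcal G_{I,\,g_0(x_0),\,\delta_0,\,\eta_0}$ by the chain rule together with submultiplicativity of the Frobenius norm, and then invoke Theorem~\ref{robust-linear} applied to the u.n.d.\ Cantor set $g_0(\widehat K)$. Your discussion of the localization issue (that $h$ is only defined on $V$ and that the proof of Theorem~\ref{robust-linear} really only uses the perturbation on a small nested cell $\mathtt R_\sigma$) is more explicit than the paper's, but the substance is identical.
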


\begin{proof}
   Before we begin our proof, we first notice a linear algebra fact. Let $\mathsf{A,B}\in \mathsf{GL}_d(\R)$. Note that 
    $$
     \frac{1}{\sqrt{d}}\|\mathsf{AB^{-1}}-I\|_F\le \|\mathsf{AB}^{-1}-I\|\le \|\mathsf{A}-\mathsf{B}\|\|\mathsf{B}^{-1}\|\le \|\mathsf{A}-\mathsf{B}\|_F\|\mathsf{B}^{-1}\|_F.
    $$
    Consequently, 
    \begin{equation}\label{eq-translate-ball}
    \mathsf{A}\in B_{\delta}(\mathsf{B}) \ \Longrightarrow  \ \mathsf{AB}^{-1}\in B_{\sqrt{d}\|\mathsf{B}^{-1}\|_F\delta}(I).\end{equation}
\medskip
We now begin the proof. By the inverse function theorem, there exists an open set $W$ containing $x_0$ and an open set $V$ containing $g_0(x_0)$ such that $g_0: W\to V$ is invertible.
We now apply Theorem \ref{robust-linear} to $g_0(\widehat{K})$. There exists $\delta_0>0$ such that for all $\eta_0>0$, there exists a Cantor set  $\widetilde{K}$ such that for all $g\in{\mathcal G}_{I,g_0(x_0),\delta_0,\eta_0}$, we have 
\begin{equation}\label{eq_gg_o}
g(g_0(K))\cap \widetilde{K}\ne\varnothing.    
\end{equation}
 We take $\delta = \min\left\{\frac{\delta_0}{3\sqrt{d}(\|J_{g_0}(x_0)^{-1}\|_F+1)},\delta_0\right\}$ and sufficiently small $\eta>0$ so that 
\begin{enumerate}
\item[(i)] $B_{\eta}(x_0)\subset W$,
\item[(ii)] if $|x-x_0|<\eta$, then $\|J_{g_0}(x)^{-1}\|_F\le \|J_{g_0}(x_0)^{-1}\|_F+1$, and
\item [(iii)]    $\|J_{g_0}(x)-J_{g_0}(x_0)\|_F<\delta$.
\end{enumerate}
Such $\eta$ exists since $\|J_{g_0}(x)^{-1}\|_F$ and $\|J_{g_0}(x)-J_{g_0}(x_0)\|_F$ are continuous functions and $W$ is open. Finally, as $g_0(B_{\eta}(x_0))$ is open,  we can take $\eta'<\eta_0$ so that 
$$
|y-g_0(x_0)|<\eta' \Longrightarrow y = g_0(x) \ \mbox{for some $x$ where} \ |x-x_0|<\eta.
$$
 For all $g\in{\mathcal G}_{g_0,x_0,\delta,\eta}$, we have 
 \begin{enumerate}
\item $J_g(x_0)\in  B_\delta (J_{g_0}(x_0))$,
\item $|g(x_0)-g_0(x_0)|<\delta$,
\item $|x-x_0|<\eta$  $\Longrightarrow$ $\|J_g(x)-J_g(x_0)\|_F<\delta$. 
 \end{enumerate}

We claim that $g\circ g_0^{-1}\in {\mathcal G}_{I,g_0(x_0),\delta_0,\eta'}$. As ${\mathcal G}_{I,g_0(x_0),\delta_0,\eta'}\subset {\mathcal G}_{I,g_0(x_0),\delta_0,\eta_0}$. This combined with (\ref{eq_gg_o}) implies that $g(K)\cap \widetilde{K}\ne\varnothing$ for all $g \in G_{g_0, x_0, \delta, \eta}$, completing the proof.
 
 \medskip
 
 It remains to justify the claim, which requires us to show that 
  \begin{enumerate}
\item[(a)] $J_{g\circ g_0^{-1}}(g_0(x_0))\in  B_{\delta_0} (I))$,
\item[(b)] $|g\circ g_0^{-1}(g_0(x_0))-g_0(x_0)|<\delta_0$,
\item[(c)] $|y-g_0(x_0)|<\eta'$  $\Longrightarrow$ $\|J_{g\circ g_0^{-1}}(y)-I\|_F<\delta_0$. 
 \end{enumerate}
 {\it Proof of (a)}. (1) and (\ref{eq-translate-ball}) implies that  we have $J_g(x_0)J_{g_0}(x_0)^{-1}\in B_{\delta_0}(I)$. Thus, $J_{gg_0^{-1}}(g_0(x_0)) =  J_g(x_0)J_{g_0}(x_0)^{-1}\in B_{\delta_0}(I)$, where we used the chain rule and the inverse function theorem for Jacobians. 
 
 \medskip
 
\noindent {\it Proof of (b)}. It follows from  $|gg_0^{-1}(g_0(x_0))-g_0(x_0)| = |g(x_0)-g_0(x_0)|<\delta<\delta_0$.
 
  \medskip
  
 \noindent {\it Proof of (c)}. For all $|y-g_0(x_0)|<\eta'$, $y = g_0(x)$ for some $x$ such that $|x-x_0|<\eta$. 
 Now,
 $$
 \begin{aligned}
   &\|J_{g\circ g_0^{-1}}(y)-I\|_F  = \|J_g(x)J_{g_0}(x)^{-1}-I\|_F \le \sqrt{d}\|J_{g_0}(x)^{-1}\|_F\|J_g(x)-J_{g_0}(x)\|_F\\
   \le & \sqrt{d}\|J_{g_0}(x)^{-1}\|_F\left(\|J_g(x)-J_{g}(x_0)\|_F+\|J_g(x_0)-J_{g_0}(x_0)\|_F+\|J_{g_0}(x_0)-J_{g_0}(x)\|_F\right)\\
   &\le 3\sqrt{d} (\|J_{g_0}(x_0)^{-1}\|_F+1)\delta\le \delta_0.
 \end{aligned}
 $$
Note that $\|J_g(x)-J_{g}(x_0)\|_F<\delta$ follows from (3), $\|J_g(x_0)-J_{g_0}(x_0)\|_F<\delta$ follows from (1) and $\|J_{g_0}(x_0)-J_{g_0}(x_0)\|_F<\delta$ follows from (iii) and finally $\|J_{g_0}(x)^{-1}\|_F\le \|J_{g_0}(x_0)^{-1}\|_F+1$ follows from (ii).
\end{proof}

\begin{theorem} (={\bf Theorem \ref{robust-linear-general-intro}})
\label{robust-linear-general-1}
    Let  $K$ be a Cantor set in $\R^d$ and let $g_0\in C^1_{\mathsf{inv}}(\R^d)$. Then, there exists $x_{0}\in K$ and $\delta >0$ such that for all sufficiently small $\eta>0$, we can find  a Cantor set $\widetilde{K} \subset \R^d$ such that for all $g\in {\mathcal G}_{g_0,x_{0},\delta,\eta}$,
    \begin{equation}\label{eq4.6-1}
        g(K)\cap \widetilde{K} \neq \varnothing.
    \end{equation}
Moreover, there exists $\varepsilon>0$ and a Cantor set $\widehat{K}$ such that 
 \begin{equation}\label{eq4.6-2}
 \left(\bigcap_{g\in{\mathsf B}^{\mathsf{d}}_{\varepsilon}(g_0)}\left(g(K)+\widehat{K}\right)\right)^{\circ}\ne \emptyset.
\end{equation} 
\end{theorem}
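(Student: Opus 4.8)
The plan is to deduce Theorem~\ref{robust-linear-general-1} from Theorem~\ref{robust-linear-general}, which already treats the case where $g_0(K)$ is uniformly non-degenerate, by conjugating $g_0$ with an orthogonal transformation furnished by Theorem~\ref{thm-degenerate4}. First I would localize. Since $g_0\in C^1_{\mathsf{inv}}(\R^d)$, the inverse function theorem gives, for a suitable $x_0\in K$, an open neighborhood $W\ni x_0$ on which $g_0$ is a diffeomorphism. As $K$ is totally disconnected, it admits a nonempty clopen subset $K'\subset K\cap W$ containing $x_0$; such a $K'$ is again a Cantor set, and $g_0|_{K'}$ is injective, so $g_0(K')$ is a Cantor set. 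Because $g(K')\subset g(K)$, it suffices to prove \eqref{eq4.6-1} and \eqref{eq4.6-2} with $K$ replaced by $K'$, so from now on I assume $K=K'$ and $g_0(K)$ is a Cantor set.

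Next I apply Theorem~\ref{thm-degenerate4} to the Cantor set $g_0(K)$: there is an orthogonal transformation $\mathsf{O}$ on $\R^d$ such that $\mathsf{O}(g_0(K))$ is u.n.d., with u.n.d.\ sub-Cantor set $\widehat{L}$. Put $\tilde g_0:=\mathsf{O}\circ g_0\in C^1_{\mathsf{inv}}(\R^d)$, so $\tilde g_0(K)=\mathsf{O}(g_0(K))$. Since $\tilde g_0|_K$ is a homeomorphism onto its image, $K_\ast:=(\tilde g_0|_K)^{-1}(\widehat{L})$ is a sub-Cantor set of $K$ with $\tilde g_0(K_\ast)=\widehat{L}$; I relabel $x_0$ to be a point of $K_\ast$ and note $J_{\tilde g_0}(x_0)=\mathsf{O}\,J_{g_0}(x_0)\in\mathsf{GL}_d(\R)$. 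Theorem~\ref{robust-linear-general}, applied to $\tilde g_0$, $K$, $K_\ast$, $x_0$, yields $\delta>0$ such that for every sufficiently small $\eta>0$ there is a Cantor set $\widetilde K_0$ with $h(K)\cap\widetilde K_0\ne\varnothing$ for all $h\in\mathcal{G}_{\tilde g_0,x_0,\delta,\eta}$. To transfer this back, I use that the Frobenius norm is invariant under left multiplication by the orthogonal $\mathsf{O}$ and that $\mathsf{O}$ preserves the Euclidean norm: since $J_{\mathsf{O}\circ g}=\mathsf{O}\,J_g$ and $(\mathsf{O}\circ g)(x_0)=\mathsf{O}\,g(x_0)$, all three defining inequalities of $\mathcal{G}_{\tilde g_0,x_0,\delta,\eta}$ hold for $\mathsf{O}\circ g$ whenever $g\in\mathcal{G}_{g_0,x_0,\delta,\eta}$. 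Hence $\mathsf{O}(g(K))\cap\widetilde K_0\ne\varnothing$, i.e.\ $g(K)\cap\widetilde K\ne\varnothing$ with $\widetilde K:=\mathsf{O}^{-1}(\widetilde K_0)$, again a Cantor set; this proves \eqref{eq4.6-1}.

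For \eqref{eq4.6-2} I would pass from the pointwise statement over $\mathcal{G}_{g_0,x_0,\delta,\eta}$ to the metric ball $\mathsf B^{\mathsf d}_\varepsilon(g_0)$ via Lemma~\ref{lemma_ball-inclusion}, then use a translation trick. By Lemma~\ref{lemma_ball-inclusion} (with compact set $K$ and point $x_0$) there are a constant $c>0$ and $\varepsilon_0>0$ with $\mathsf B^{\mathsf d}_\varepsilon(g_0)\subset\mathcal{G}_{g_0,x_0,\,c\varepsilon,\,\eta}$ whenever $0<\varepsilon<\varepsilon_0$ and $\eta\le\Delta(g_0,x_0,\varepsilon)$. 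Since $\Delta(g_0,x_0,\varepsilon)\to0$ as $\varepsilon\to0$, I fix $\varepsilon>0$ so small that $c\varepsilon<\delta/2$ and that $\eta:=\Delta(g_0,x_0,\varepsilon)$ is one of the admissible small values in \eqref{eq4.6-1}; let $\widetilde K$ be the Cantor set from \eqref{eq4.6-1} for this $\eta$, and set $\widehat K:=-\widetilde K$, a Cantor set. For any $g\in\mathsf B^{\mathsf d}_\varepsilon(g_0)$ and any $t\in\R^d$ with $|t|<\rho:=\delta/2$, the translate $x\mapsto g(x)+t$ has the same Jacobian as $g$ and value $g(x_0)+t$ within $c\varepsilon+\rho<\delta$ of $g_0(x_0)$, hence lies in $\mathcal{G}_{g_0,x_0,\delta,\eta}$; by \eqref{eq4.6-1} there are $k\in K$, $\tilde k\in\widetilde K$ with $g(k)+t=\tilde k$, so $t\in\widetilde K-g(K)=-\bigl(g(K)+\widehat K\bigr)$. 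Thus $B_\rho(0)\subset-(g(K)+\widehat K)$ for every $g\in\mathsf B^{\mathsf d}_\varepsilon(g_0)$, and since $-B_\rho(0)=B_\rho(0)$ we get $B_\rho(0)\subset\bigcap_{g\in\mathsf B^{\mathsf d}_\varepsilon(g_0)}\bigl(g(K)+\widehat K\bigr)$, which has nonempty interior.

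The main obstacle is bookkeeping rather than any single hard estimate: one has to thread the quantifiers correctly through localization, orthogonal conjugation, the invocation of Theorem~\ref{robust-linear-general}, and the transfer back, and then in the last step couple $\varepsilon$, $\eta=\Delta(g_0,x_0,\varepsilon)$ and $\rho$ so that the translated maps still land in $\mathcal{G}_{g_0,x_0,\delta,\eta}$ with the \emph{same} $\eta$ used in \eqref{eq4.6-1}; this is exactly why the fact $\Delta(g_0,x_0,\varepsilon)\to0$ as $\varepsilon\to0$ is needed.
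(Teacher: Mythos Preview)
Your argument follows the same route as the paper's: conjugate by the orthogonal transformation from Theorem~\ref{thm-degenerate4}, invoke Theorem~\ref{robust-linear-general}, pull back via $\mathsf{O}^{-1}$, and then obtain the interior by the translation trick combined with Lemma~\ref{lemma_ball-inclusion}. Your explicit localization to a clopen $K'\subset K$ on which $g_0$ is injective is in fact more careful than the paper (which tacitly treats $g_0(K)$ as a Cantor set), and your use of the orthogonal invariance $\|\mathsf{O}A\|_F=\|A\|_F$ to transfer $\mathcal{G}_{g_0,x_0,\delta,\eta}$ to $\mathcal{G}_{\tilde g_0,x_0,\delta,\eta}$ with the \emph{same} $\delta$ is cleaner than the paper's rescaling by $\|\mathsf{O}\|_F$.

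There is, however, one genuine slip in the second half. The assertion ``$\Delta(g_0,x_0,\varepsilon)\to 0$ as $\varepsilon\to 0$'' is false in general: if $J_{g_0}$ is constant on a ball $B_r(x_0)$ (for instance when $g_0$ is affine, a case the theorem is certainly meant to cover), then $\Delta(g_0,x_0,\varepsilon)\ge r$ for every $\varepsilon>0$. Consequently you cannot force $\eta:=\Delta(g_0,x_0,\varepsilon)$ to be small by shrinking $\varepsilon$. The repair is immediate and is exactly what the paper does: do not \emph{set} $\eta=\Delta(g_0,x_0,\varepsilon)$, but rather first fix $\varepsilon$ with $c\varepsilon<\delta/2$, and then simply \emph{choose} any $\eta$ with $0<\eta\le\Delta(g_0,x_0,\varepsilon)$ that is also below the threshold required by the first part. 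Since $\Delta(g_0,x_0,\varepsilon)>0$ always, such an $\eta$ exists, and Lemma~\ref{lemma_ball-inclusion} still gives $\mathsf{B}^{\mathsf d}_\varepsilon(g_0)\subset\mathcal{G}_{g_0,x_0,c\varepsilon,\eta}$. With this correction the remainder of your translation argument goes through unchanged.
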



\begin{proof}
   By Theorem \ref{thm-degenerate4}, we can find an orthogonal transformation ${\mathsf O}$ such that ${\mathsf O}(g_0(K))$ is a Cantor set that is {\bf u.n.d.}. Let $\widehat{K}$ be the sub-Cantor set of $K$ such that ${\mathsf O}(g_0(\widehat{K}))$ is uniformly non-degenerate satisfying (\ref{comparable}). We now choose $x_{0}\in \widehat{K}$.  By Theorem \ref{robust-linear-general}, we can find $\delta>0$ such that for all $\eta$ sufficiently small, there exists a Cantor set $\widetilde{K}_0\subset \R^d$ such that for all $h\in {\mathcal G}_{{\mathsf{O}}\circ g_0,x_{0},\delta,\eta}$, 
   $$
   h(K)\cap \widetilde{K}_0\ne\emptyset.
$$
To finish the proof, we take $g\in {\mathcal G}_{g_0, x_{0},\delta/\|{\mathsf O}\|_F,\eta}$,  then a routine calculation shows that  $h = {\mathsf O}\circ g$ will satisfy $J_h(x_{0})\in B_{\delta}(J_{{\mathsf O}\circ g_0}(x_0))$, $|h(x_{0})-(\mathsf{O}\circ g_0)(x_{0})|<\delta$ and $|x-x_0|<\eta$ $\Rightarrow$ $|h(x)-h(x_0)|<\delta$.  Thus, $h\in {\mathcal G}_{{\mathsf{O}}, x_0,\delta,\eta}$ and hence $({\mathsf O}\circ g)(K)\cap \widetilde{K}_0\ne \emptyset$. The proof is complete by taking $\widetilde{K}={\mathsf O}^{-1}(\widetilde{K}_0)$.  

\medskip

We now prove the last statement. Let $g\in {\mathcal G}_{g_0,x_{0},\delta/2,\eta}$. Then, we notice that if we let $|t|<\delta/2$ and consider the map $\widetilde{g}(x) = g(x)+t$, then $J_{\widetilde{g}}(x_{0}) = J_g(x_{0})$, so $\|J_{\widetilde{g}}(x_0)-J_g(x_0)\|<\delta$. Also, 
$$
|\widetilde{g}(x_{0})- g_0(x_{0})|\le |g(x_{0})-g_0(x_{0})|+|t|<\delta.
$$
Finally, if $|x-x_0|<\eta$, then $\|J_{\widetilde{g}}(x)-J_{\widetilde{g}}(x_0)\| = \|J_{g}(x)-J_{{g}}(x_0)\|<\delta/2$ by the definition of $g\in {\mathcal G}_{g_0,x_{0},\delta/2,\eta}$. 
Hence, by (\ref{eq4.6-1}), $(g(K)+t)\cap \widetilde{K}\ne\emptyset$. Therefore, $t\in g(K)-\widetilde{K}$. Defining $\widehat{K} = -\widetilde{K}$, we find that the Euclidean ball $\{t: |t|<\delta/2\}$ is a subset of $\left(\bigcap\limits_{g\in{\mathcal G}_{g_0,x_{0}, \delta/2,\eta}}\left(g(K)+\widehat{K}\right)\right)^{\circ}$. By Lemma \ref{lemma_ball-inclusion}, the ball ${\mathsf B}^{\mathsf d}_{\delta/2(2c_K+1)}(g_0)\subset {\mathcal G}_{g_0,x_{0}, \delta/2,\eta}$.  This shows (\ref{eq4.6-2}) holds with $\varepsilon =\delta/2(2c_K+1)$ provided we chose $\delta$ and $\eta$ small enough so that Lemma \ref{lemma_ball-inclusion} can be applied. 
\end{proof}


\subsection{Interior of general $C^1$ functions}
In this subsection, we aim at proving an analogous theorem for Theorem \ref{theorem-interiorH} on $\R^d$ under the uniformly non-degenerate assumption of Cantor sets. We denote by ${\mathtt Q}_{\delta}(a)$ the closed cube in $\R^d$ with side length $\delta$ and centered at $a$. Let $\Lambda$ be a compact set on $\R^N$ (the parameter set) with nonempty interior and let $\alpha_0$ be an interior point of $\Lambda$, and $Q_1$ and $Q_2$ be two sets in $\R^d$ with nonempty interior. Also, let $H\in C^1(\Lambda\times Q_1\times Q_2) \to \R^d$ and let the $d \times d$ matrices of Jacobians $J_{H,1}$ of $H$ by $x$ and $J_{H,2}$ of $H$ by $y$ be given by
\begin{equation}\label{eq_Jac}
    J_{H,1} = \left( \frac{\partial H_i}{\partial x_j}\right)_{i,j=1}^d \quad \text{and} \quad J_{H,2} = \left( \frac{\partial H_i}{\partial y_j}\right)_{i,j=1}^d.
\end{equation}
We will assume that $J_{H,2}$ is  invertible on $Q_2$.

\medskip

Fix $(u_1,u_2)\in (Q_1\times Q_2)^{\circ}$ and let $c_0 = H(\alpha_0,u_1,u_2)$. Choose $\delta_0>0$ such that 
\begin{equation}\label{eq-interior-delta_0-Rd}
{\mathtt Q}_{\delta_0}(\alpha_0)\times {\mathtt Q}_{\delta_0}(u_1)\times {\mathtt Q}_{\delta_0}(u_2)\subset (\Lambda\times Q_1\times Q_2)^{\circ}.
\end{equation}
Let
\begin{align*}
    S= {\mathtt Q}_{\delta_0}(c_0)\times {\mathtt Q}_{\delta_0}(\alpha_0)\times {\mathtt Q}_{\delta_0}(u_1)\times {\mathtt Q}_{\delta_0}(u_2)
\end{align*}
and define $F:S\to \R^d$ by
\begin{align*}
    F(c,\alpha,x,y) = H(\alpha,x,y)-c.
\end{align*}
By the implicit function theorem, there exist $\delta_1 \in (0,\delta_0)$ and a function $G:Z\to \mathtt{Q}_{\delta_0} (u_2)$, where $Z$ is the closed set given by
\begin{align*}
    Z=\mathtt{Q}_{\delta_1}(c_0) \times \mathtt{Q}_{\delta_1}(\alpha_0) \times \mathtt{Q}_{\delta_1}(u_1),
\end{align*}
such that $G(c_0, \alpha_0, u_1)=u_2$ and
\begin{align}\label{eq_HG_d}
    H(\alpha, x, G(c,\alpha,x))=c.
\end{align}
Define
\begin{align}\label{eq_HG_d1}
    g_{c,\alpha} (x) = G(c,\alpha, x), \quad (c,\alpha)\in \mathtt{Q}_{\delta_1} (c_0) \times \mathtt{Q}_{\delta_1}(\alpha_0).
\end{align}
Notice that the Jacobian  of $g_{c,\alpha}$,
$$
J_{g_{c,\alpha}} (x) = J_{H,2}^{-1}(c,\alpha, x)\cdot J_{H,1}(c,\alpha, x),
$$
is invertible by our assumption. With this setup for $\R^d$, the following lemma can be proved using the same line of proof as in Lemma \ref{lemma_2.4}. We will omit the details of the proof. 

\begin{lemma}\label{lemma_3.10}
    Let $K_1 \subset Q_1$ and $K_2 \subset Q_2$ be Cantor sets and $\delta_2\le \delta_1$. If
\begin{align*}
    g_{c,\alpha}(K_1)\cap K_2\ne\emptyset \ \mbox{for all} \  (c,\alpha)\in {\mathtt Q}_{\delta_2}(c_0)\times {\mathtt Q}_{\delta_2}(\alpha_0),
\end{align*}
then
\begin{align*}
    {\mathtt Q}_{\delta_2} (c_0)\subset \bigcap_{\alpha\in {\mathtt Q}_{\delta_2}(\alpha_0)} H(\alpha, K_1,K_2).
\end{align*}
\end{lemma}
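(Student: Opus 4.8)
The plan is to mirror, essentially verbatim, the one-line proof of Lemma \ref{lemma_2.4}, replacing the scalar implicit relation by its $\R^d$ version \eqref{eq_HG_d}. The point is that $d>1$ changes nothing: $H$ and $g_{c,\alpha}$ are $\R^d$-valued and \eqref{eq_HG_d} is an identity in $\R^d$, so the same substitution works.

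First I would fix an arbitrary $c\in\mathtt{Q}_{\delta_2}(c_0)$ and an arbitrary $\alpha\in\mathtt{Q}_{\delta_2}(\alpha_0)$. Since $\delta_2\le\delta_1$, we have $(c,\alpha)\in\mathtt{Q}_{\delta_1}(c_0)\times\mathtt{Q}_{\delta_1}(\alpha_0)$, so the map $g_{c,\alpha}$ defined in \eqref{eq_HG_d1} is available on $\mathtt{Q}_{\delta_1}(u_1)$ and the hypothesis $g_{c,\alpha}(K_1)\cap K_2\ne\emptyset$ applies: there exist $k_1\in K_1$ and $k_2\in K_2$ with $g_{c,\alpha}(k_1)=k_2$. (The hypothesis already presupposes that $g_{c,\alpha}$ is evaluated at points of $K_1$; in the intended application one first shrinks $K_1$ to a sub-Cantor set inside $\mathtt{Q}_{\delta_1}(u_1)$, exactly as in the $\R^1$ argument of Theorem \ref{theorem-interiorH}.)

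Next I would substitute $x=k_1$ into the defining identity \eqref{eq_HG_d}, obtaining $H(\alpha,k_1,g_{c,\alpha}(k_1))=c$, i.e. $H(\alpha,k_1,k_2)=c$ after replacing $g_{c,\alpha}(k_1)$ by $k_2$. Hence $c\in H(\alpha,K_1,K_2)$. Since $\alpha$ was an arbitrary element of $\mathtt{Q}_{\delta_2}(\alpha_0)$, this gives $c\in\bigcap_{\alpha\in\mathtt{Q}_{\delta_2}(\alpha_0)}H(\alpha,K_1,K_2)$; and since $c$ was an arbitrary element of $\mathtt{Q}_{\delta_2}(c_0)$, the claimed inclusion $\mathtt{Q}_{\delta_2}(c_0)\subset\bigcap_{\alpha\in\mathtt{Q}_{\delta_2}(\alpha_0)}H(\alpha,K_1,K_2)$ follows.

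I do not expect any genuine obstacle in this lemma; it is purely a bookkeeping consequence of the implicit function theorem normalization \eqref{eq_HG_d}. The only subtlety worth a remark is ensuring the domains match — that $\delta_2\le\delta_1$ so that $g_{c,\alpha}$ is defined for every relevant $(c,\alpha)$, and that $K_1$ is (or has been arranged to lie) inside the domain cube of $g_{c,\alpha}$ — but both are built into the statement and its intended use, so the proof is short. The genuinely hard work of this section lies not here but in producing a Cantor set $K_2$ for which the hypothesis $g_{c,\alpha}(K_1)\cap K_2\ne\emptyset$ holds uniformly in $(c,\alpha)$, which is where the containment lemma of Section 3 together with the derivative estimates (Lemma \ref{lemma-derivative} and its $\R^d$ analogue) will enter.
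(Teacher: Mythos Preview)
Your proof is correct and follows exactly the approach the paper intends: the paper states that Lemma~\ref{lemma_3.10} ``can be proved using the same line of proof as in Lemma~\ref{lemma_2.4}'' and omits the details, and you have supplied precisely that argument. Your added remarks about the domain condition $\delta_2\le\delta_1$ and the placement of $K_1$ inside $\mathtt{Q}_{\delta_1}(u_1)$ are accurate and match how the lemma is used in the proof of Theorem~\ref{main-theorem}.
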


\medskip

We now conclude our main theorem. 

\begin{theorem} [{\bf = Theorem \ref{main-theorem}}]
   Let $N\ge 1$ and $\Lambda\subset \R^N$ be a set with interior   and let   $\alpha_0\in \Lambda^{\circ}$. Let $Q_1 \subset Q_2\subset \R^d$ and $Q_1^{\circ}\ne \varnothing$.  Let $H$ be a $C^1$ function on $\Lambda\times Q_1\times Q_2$ such that $J_{H,2}$ in 
 (\ref{eq_Jac}) is  invertible on $Q_2$. 
 
 \medskip
 
 Then for all Cantor sets $K_1\subset Q_1$, there exists a Cantor set $K_2\subset Q_2$ and a cube ${\mathtt Q}_{{\epsilon}}(\alpha_0)$ (${\varepsilon}>0$) such that 
    $$
    \left(\bigcap_{\alpha\in {\mathtt Q}_{{\varepsilon}}(\alpha_0)} H(\alpha, K_1,K_2)\right)^{\circ}\ne \emptyset.
    $$
\end{theorem}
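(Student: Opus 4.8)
The plan is to follow exactly the blueprint established in the proof of Theorem~\ref{theorem-interiorH} on $\R^1$, but substituting the $\R^d$ containment lemma (Theorem~\ref{containmentlemma-Rd}) together with the uniform-non-degeneracy machinery for the interval-gap argument. First I would localize: pick $u_1$ to be a point of the Cantor set $K_1$ such that a small cube $\mathtt{Q}_{\delta_0}(u_1)$ meets $K_1$ in a sub-Cantor set; using Theorem~\ref{thm-degenerate4}, after an orthogonal transformation I may assume this sub-Cantor set $\widetilde{K_1}$ is \textbf{u.n.d.} (the orthogonal change of coordinates can be absorbed into $H$, since composing $H$ with an orthogonal map on the $x$-variable does not affect the invertibility of $J_{H,2}$). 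Set $u_2 = u_1$, $c_0 = H(\alpha_0,u_1,u_2)$, and run the implicit function theorem exactly as in the displayed setup preceding Lemma~\ref{lemma_3.10}, obtaining $\delta_1$, the closed box $Z = \mathtt{Q}_{\delta_1}(c_0)\times\mathtt{Q}_{\delta_1}(\alpha_0)\times\mathtt{Q}_{\delta_1}(u_1)$, and the implicitly defined family $g_{c,\alpha}(x) = G(c,\alpha,x)$ with invertible Jacobian $J_{g_{c,\alpha}} = J_{H,2}^{-1}J_{H,1}$.

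The heart of the matter is to produce a single Cantor set $K_2\subset Q_2$ whose gaps are small enough that $g_{c,\alpha}(\widetilde{K_1})\cap K_2\neq\varnothing$ \emph{simultaneously} for all $(c,\alpha)\in\mathtt{Q}_{\delta_1}(c_0)\times\mathtt{Q}_{\delta_1}(\alpha_0)$, and then invoke Lemma~\ref{lemma_3.10}. For this I need two uniform estimates over the compact parameter box $Z$. (i) A \emph{uniform separation} estimate: writing $\widehat{\widetilde{K_1}} = \bigcap_n\bigcup_{\sigma\in\Sigma_d^n}\mathtt{R}_\sigma$ for the u.n.d. sub-representation with constants $C_1,C_2$, and $a_n$ for its stagewise $d_{\min}$-separation as in \eqref{eqd_k}, I must show there is $\lambda>0$, independent of $(c,\alpha)$, so that the image components $g_{c,\alpha}(\mathtt{R}_{\sigma p})$, $g_{c,\alpha}(\mathtt{R}_{\sigma q})$ still have $d_{\min}\geq \lambda a_n$. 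This is obtained precisely as in the proof of Theorem~\ref{robust-linear}: apply the mean value theorem component-wise to $g_{c,\alpha}$, bound the off-diagonal Jacobian entries using continuity of $J_{H,1},J_{H,2}^{-1}$ on the compact $Z$ (shrinking $\delta_1$ and the localization cube so $J_{g_{c,\alpha}}$ stays within a small Frobenius ball of a fixed invertible matrix, which after the orthogonal reduction we may take close to a scalar multiple of the identity on the relevant block — or more simply, directly use that a matrix close to an invertible one cannot collapse the $\pi_j$-comparability \eqref{comparable}), and use \eqref{comparable} to trade $|\pi_k(x-x')|$ for $|\pi_j(x-x')|$. (ii) A \emph{uniform containment} estimate: $g_{c,\alpha}(\widetilde{K_1})$ lies in a fixed bounded region (the continuous image of the compact set $Z\times\mathtt{Q}_{\delta_1}(u_1)$), so a single large cube contains all of them.

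With (i) and (ii) in hand I would construct $K_2$ to be the centrally symmetric product Cantor set of Theorem~\ref{containmentlemma-Rd} whose convex hull is that fixed large cube and whose $n$-th stage gap length $g_n$ is chosen below $\lambda a_n$ (and below the constraint \eqref{eq_ell_n} making it a genuine Cantor set). Then for every $(c,\alpha)\in\mathtt{Q}_{\delta_1}(c_0)\times\mathtt{Q}_{\delta_1}(\alpha_0)$, hypotheses \textsf{(K0)}, \textsf{(K1)} of Theorem~\ref{containmentlemma-Rd} hold for the pair $(g_{c,\alpha}(\widehat{\widetilde{K_1}}), K_2)$ (the $d_k$ there being $\lambda a_k$), so $g_{c,\alpha}(\widetilde{K_1})\cap K_2\supset g_{c,\alpha}(\widehat{\widetilde{K_1}})\cap K_2\neq\varnothing$; Lemma~\ref{lemma_3.10} then gives $\mathtt{Q}_{\delta_1}(c_0)\subset\bigcap_{\alpha\in\mathtt{Q}_{\delta_1}(\alpha_0)}H(\alpha,K_1,K_2)$, which is the claimed nonempty interior with $\varepsilon=\delta_1$. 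Finally I undo the orthogonal transformation (replace $K_2$ by its preimage and note this does not change the image set). The main obstacle I anticipate is bookkeeping in step~(i): making sure the derivative bounds on $g_{c,\alpha}$ are genuinely uniform in the parameters and compatible with the u.n.d. constants $C_1,C_2$ inherited only on a \emph{sub}-representation $\widehat{\widetilde{K_1}}$ that itself sits inside the small localization cube $\mathtt{Q}_{\delta_1}(u_1)$ — one must choose the localization cube, then $\delta_0$, then (via the implicit function theorem) $\delta_1$, then possibly shrink the cube again, in the right order so that all the constants line up. This is the same delicate ordering as in the proof of Theorem~\ref{robust-linear-general}, and otherwise the argument is routine.
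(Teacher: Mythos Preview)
Your overall architecture---localize, run the implicit function theorem, verify the hypotheses of the $\R^d$ containment lemma uniformly in $(c,\alpha)$, and finish with Lemma~\ref{lemma_3.10}---is exactly the paper's. But there is a real gap in step~(i), and it is precisely the issue you flag as ``bookkeeping'': it is not bookkeeping, it is the whole point.

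You apply the orthogonal transformation on the \emph{$x$-side}, i.e.\ you make $\widetilde{K_1}$ itself u.n.d. and absorb $\mathsf{O}$ into the first slot of $H$. After that, $J_{g_{c,\alpha}}=J_{H,2}^{-1}J_{H,1}\mathsf{O}^{-1}$ is still an arbitrary invertible matrix $A$ at the base point, and an invertible linear map can certainly destroy the coordinate-comparability \eqref{comparable}: in $\R^2$, the vector $v=(1,1)$ satisfies $|\pi_1 v|=|\pi_2 v|=\tfrac{1}{\sqrt2}\|v\|$, yet a $45^\circ$ rotation sends it to $(0,\sqrt2)$, collapsing $\pi_1$ completely. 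So your fallback ``a matrix close to an invertible one cannot collapse the $\pi_j$-comparability'' is false, and the other suggestion (that the orthogonal reduction puts $J_{g_{c,\alpha}}$ near a scalar multiple of the identity) is also wrong, since $\mathsf{O}$ was chosen to straighten $K_1$, not to normalize the Jacobian. Consequently the image components $g_{c,\alpha}(\mathtt{R}_{\sigma p})$ need not satisfy any $d_{\min}$ lower bound, and you cannot feed them into Theorem~\ref{containmentlemma-Rd}.

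The paper resolves this by applying the orthogonal transformation on the \emph{image} side: it takes $\mathsf{O}$ so that $\mathsf{O}\bigl(g_{c_0,\alpha_0}(K_1)\bigr)$ is u.n.d., sets $g_0=\mathsf{O}\circ g_{c_0,\alpha_0}$, and then invokes Theorem~\ref{robust-linear-general} (which packages the mean-value estimate of Theorem~\ref{robust-linear} around the identity) to get a single $\widetilde{K}$ meeting every $\mathsf{O}\circ g_{c,\alpha}(K_1)$ for $(c,\alpha)$ near $(c_0,\alpha_0)$; finally $K_2=\mathsf{O}^{-1}(\widetilde{K})$. The crucial difference is that now the perturbed maps $\mathsf{O}\circ g_{c,\alpha}$ are $C^1$-close to the fixed $g_0$, so after composing with $g_0^{-1}$ they are close to the identity, which is exactly the regime where the diagonal-dominance estimate \eqref{Tbound} and hence the $d_{\min}$ lower bound go through. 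Your argument becomes correct if you move the orthogonal transformation from the domain of $g_{c,\alpha}$ to its codomain (equivalently, absorb it into the $y$-slot of $H$); the rest of your outline then coincides with the paper's proof.
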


\begin{proof}
    Let $u_1\in K_1 \subset Q_1 \subset Q_2$ and let $c_0 = H(\alpha_0,u_1,u_1)$. Then the maps $g_{c,\alpha}$ are defined according to (\ref{eq_HG_d}) and (\ref{eq_HG_d1}). By restricting to a sub-Cantor set, we may assume $K_1\subset {\mathtt Q}_{\delta_1}(u_1)$. Using Theorem  \ref{thm-degenerate4}, there exists an orthogonal transformation ${\mathsf O}$ such that ${\mathsf O}(g_{c_0,\alpha_0}(K_1))$ is uniformly non-degenerate. Let us write $g_0 = {\mathsf O} \circ g_{c_o,\alpha_0}$ and let $g_0(\widehat{K}_1)$ be the sub-Cantor set inside $g_0(K_1)$ that realizes the uniform non-degeneracy of $g_0(K_1)$. If $x_0\in \widehat{K}_1\subset {\mathtt Q}_{\delta_1}(u_1)$, then 
    $$
    J_{g_0}(x_0) = J_{{\mathsf O}\circ g_{c_0,\alpha_0}} (x_0)  = \left({\mathsf O} \circ J_{g_{c_0,\alpha_0}}\right)(x_0)\in{\mathsf{GL}}_d(\R).
    $$
 By Theorem \ref{robust-linear-general},  there exists $\delta>0$ such that for all sufficiently small $\eta>0$, one can find a Cantor set $\widetilde{K}$ such that for all $g\in {\mathcal G}_{g_0,x_0,\delta,\eta}$, $g(K_1)\cap \widetilde{K}\ne\emptyset$.  $\widetilde{K}$ is chosen from the containment lemma by requiring the convex hull to strictly contain the convex hull of $g(K_1)$ for all $g\in {\mathcal G}_{g_0,x_0,\delta,\eta}$. In particular, we are only interested in the case that $g = {\mathsf O}\circ g_{c,\alpha}$ and that the convex hulls of ${\mathsf O}\left(g_{c,\alpha}(K_1)\right)$ are contained in ${\mathsf O}(Q_2)$. Note that we may choose $\widetilde{K}$ contained in ${\mathsf O}(Q_2)$. We claim that there exists $\gamma>0$ such that if $|(c,\alpha)-(c_0,\alpha_0)|<\gamma$, then ${\mathsf O}\circ g_{c,\alpha}\in {\mathcal G}_{g_0,x_0,\delta,\gamma}$. Clearly, we can make $\gamma$ sufficiently small so that we can apply Theorem \ref{robust-linear-general}. Then, we have $|(c,\alpha)-(c_0,\alpha_0)|<\gamma$ implies
 $$
{\mathsf O}\left( g_{c,\alpha} (K_1)\right)\cap \widetilde{K}\ne \emptyset. 
$$
Taking the inverse of ${\mathsf O}$, the proof is complete by defining $K_2 = {\mathsf O}^{-1}(\widetilde{K})$ and applying Lemma \ref{lemma_3.10}. 

    \medskip

To justify the claim, we need to show that there exists  $\gamma>0$ such that if $|(c,\alpha)-(c_0,\alpha_0)|<\gamma$, then 
\begin{enumerate}
    \item $\|J_{O\circ g_{c,\alpha}}(x_0)- J_{O\circ g_{c_0,\alpha_0}}(x_0)\|_F<\delta$.
    \item $|O\left(g_{c,\alpha}(x_0)\right)-O\left(g_{c_0,\alpha_0}(x_0)\right)|<\delta$.
    \item $|x-x_0|<\gamma$ $\Longrightarrow$ $\|J_{O\circ g_{c,\alpha}}(x)- J_{O\circ g_{c,\alpha}}(x_0)\|_F<\delta$.
\end{enumerate}
We now consider the following three functions, 
$$
F_1(c,\alpha,x) =\|J_{g_{c,\alpha}}(x)-J_{g_{c_0,\alpha_0}}(x_0)\|_F =  \sqrt{\sum_{i,j=1}^d \left|\frac{\partial g_{j,c,\alpha}}{\partial x_i} (x)-\frac{\partial g_{j,c_0,\alpha_0}}{\partial x_i}(x_0)\right|^2},
$$
$$
F_2(c,\alpha, x) = |g_{c,\alpha}(x_0)-g_{c_0,\alpha_0}(x_0)|,
$$
$$
F_3(c,\alpha,x) =\|J_{g_{c,\alpha}}(x)-J_{g_{c,\alpha}}(x_0)\|_F,
$$
 where $g_{j,c,\alpha}$ are the component functions of $g_{c,\alpha}$ and they all have continuous derivatives in $(c,\alpha, x)\in Z$.
 Note that all $F_i$ are uniformly continuous on a compact subset $Z'$ of $Z$ containing $(c_0,\alpha_0,x_0)$. Hence, there exists $\gamma>0$, such that if $|(c,\alpha,x)-(c',\alpha',x')|<\gamma$, then $|F_i(c,\alpha, x)-F_i(c',\alpha',x')|<\delta/ \|\mathsf{O}\|_F$ for all $i = 1,2,3$.

\medskip

For (1), if $|(c,\alpha)-(c_0,\alpha_0)|<\gamma$, then $|(c,\alpha,x_0)-(c_0,\alpha_0,x_0)|<\gamma$. Hence, notice that $F_1(c_0,\alpha_0,x_0) = 0$ and that
$$
\|J_{O\circ g_{c,\alpha}}(x_0)- J_{O\circ g_{c_0,\alpha_0}}(x_0)\|_F\le \|\mathsf{O}\|_F\cdot F_1(c,\alpha,x_0)<\delta.
$$


(2) can be done using the same argument. For (3), we take $\eta = \gamma$, then if $|x-x_0|<\eta$, then $|(c,\alpha,x)-(c,\alpha,x_0)|<\gamma$ and hence 
$$
\|J_{O\circ g_{c,\alpha}}(x)- J_{O\circ g_{c,\alpha}}(x_0)\|_F\le \|\mathsf{O}\|_F \cdot F_3(c,\alpha, x)<\delta,
 $$
where we also used that $F_3(c,\alpha,x_0) = 0$. We have thus verified that there exists $\gamma>0$ such that ${\mathsf O}\circ g_{c,\alpha}\in {\mathcal G}_{g_0,x_0,\delta,\gamma}$ for all $|(c,\alpha)-(c_0,\alpha_0)|<\gamma$. The claim follows and the proof is complete. 
\end{proof}

\section{When is a Cantor set non-degenerate?} \label{section-deg}
\medskip

We will now show that being embedded into axis parallel hyperplanes is the only possibility for the Cantor sets failing to be non-degenerate. Let us begin the setup of the proof. We let 
$$
K = \bigcap_{n=1}^{\infty} \bigcup_{\sigma\in \Sigma^n} R_{\sigma}
$$
be a nested representation of $K$ as defined in Definition \ref{nested} and (\ref{eq_C_nested}).

\begin{lemma}\label{lem-6.1}
    Suppose that $R_{\sigma}$ is degenerate. Then $R_{\sigma\sigma'}$ is degenerate for all $\sigma'\in \Sigma^k(\sigma)$ and $k\ge 1$.
\end{lemma}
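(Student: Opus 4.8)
The plan is to prove the contrapositive: I will show that if \emph{some} descendant $R_{\sigma\sigma'}$ (with $\sigma'\in\Sigma^k(\sigma)$, $k\ge 1$) is non-degenerate, then $R_{\sigma}$ itself must be non-degenerate, contradicting the hypothesis. The one structural fact I will use is that the indexing of descendants in the tree $\Sigma^{\ast}$ is transitive: a level-$m$ descendant of $R_{\sigma\sigma'}$ is precisely a set of the form $R_{(\sigma\sigma')\tau}$ with $\tau\in\Sigma^m(\sigma\sigma')$, and the concatenated multi-index $\sigma'\tau$ lies in $\Sigma^{k+m}(\sigma)$. Crucially, $d_{\min}(\cdot,\cdot)$ is a function of the two compact sets alone and does not depend on which ancestor one reaches them through.

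Concretely, first I would unwind the definition of non-degeneracy for $R_{\sigma\sigma'}$: there is an integer $m\ge 1$ and a subcollection $\{A_1,\dots,A_{d+1}\}\subset\{R_{(\sigma\sigma')\tau}:\tau\in\Sigma^m(\sigma\sigma')\}$ with $d_{\min}(A_i,A_j)>0$ for all $i\ne j$. Then, using the transitivity of the indexing, I note
$$
\{R_{(\sigma\sigma')\tau}:\tau\in\Sigma^m(\sigma\sigma')\}\subseteq\{R_{\sigma\mu}:\mu\in\Sigma^{k+m}(\sigma)\},
$$
so the very same collection $\{A_1,\dots,A_{d+1}\}$ is a family of $d+1$ level-$(k+m)$ descendants of $R_{\sigma}$ that are pairwise $d_{\min}$-separated. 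Taking the witnessing level in the definition of non-degeneracy to be $k+m\ge 1$, this shows $R_{\sigma}$ is non-degenerate, the desired contradiction. Hence $R_{\sigma\sigma'}$ is degenerate for every $\sigma'\in\Sigma^k(\sigma)$ and every $k\ge 1$.

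I do not expect any genuine obstacle here; the entire content is in carefully reading off the quantifiers in the definition of (non-)degeneracy and keeping the multi-index bookkeeping for $\Sigma^k(\sigma)$ straight. The only point deserving a line of care is the inclusion displayed above — i.e., that the witnessing sets $A_i$ for the non-degeneracy of $R_{\sigma\sigma'}$ really do appear among the descendants of $R_{\sigma}$ at the combined depth $k+m$ — which is immediate from the recursive definition of the $R_\tau$ and of $\Sigma^{\ast}$.
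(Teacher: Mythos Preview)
Your proof is correct and follows essentially the same approach as the paper: both argue the contrapositive, observing that the $d+1$ pairwise $d_{\min}$-separated components witnessing non-degeneracy of $R_{\sigma\sigma'}$ are themselves descendants of $R_{\sigma}$, so $R_{\sigma}$ is non-degenerate. Your version is more explicit about the multi-index bookkeeping (the inclusion $\{R_{(\sigma\sigma')\tau}:\tau\in\Sigma^m(\sigma\sigma')\}\subseteq\{R_{\sigma\mu}:\mu\in\Sigma^{k+m}(\sigma)\}$), but the idea is identical.
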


\begin{proof}
    If there exists $R_{\sigma\sigma'}$  such that it is non-degenerate, then one can find $d+1$ non-degenerate connected components inside $R_{\sigma\sigma'}$. But then, these connected components are also contained in $R_{\sigma}$, meaning that $R_{\sigma}$ is non-degenerate, a contradiction. So the lemma holds.
\end{proof}


\begin{lemma}\label{lemma-degenerate1}
    Let $K\subset \R^d$ be a Cantor set with a nested representation 
    $$
    K = \bigcap_{n=1}^{\infty} \bigcup_{\sigma\in \Sigma^n} R_{\sigma}.
    $$
    Suppose that all $R_{\sigma}$ are degenerate. Then, $K$ is contained in at most $d^2$ axis parallel hyperplane.
\end{lemma}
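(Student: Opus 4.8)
The plan is to reduce the geometric conclusion to a short combinatorial fact about the points of $K$. Call a finite subset $P\subseteq\R^{d}$ \emph{coordinate-generic} if any two distinct points of $P$ differ in \emph{every} coordinate, i.e.\ $\pi_{j}(p)\neq\pi_{j}(p')$ whenever $p\neq p'$ in $P$ and $j\in\{1,\dots,d\}$. The hypothesis gives in particular that $R_{\emptyset}$ is degenerate (by Lemma~\ref{lem-6.1} this is in fact equivalent to the full hypothesis), and the crux is to show that if $R_{\emptyset}$ is degenerate, then $K$ contains no coordinate-generic set of size $d+1$. To prove this, suppose $p_{1},\dots,p_{d+1}\in K$ were coordinate-generic and set $\varepsilon=\min_{a\neq b}\min_{1\le j\le d}|\pi_{j}(p_{a})-\pi_{j}(p_{b})|>0$. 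By item~(3) of Definition~\ref{nested}, choose $k$ so large that $\mbox{diam}(R_{\sigma})<\varepsilon/3$ for all $\sigma\in\Sigma^{k}$, and let $A_{a}$ be the unique level-$k$ set $R_{\sigma}$ containing $p_{a}$; the $A_{a}$ are well defined and pairwise distinct, since distinct level-$k$ sets are disjoint and each has diameter below $\varepsilon$. A triangle inequality then yields $d(\pi_{j}(A_{a}),\pi_{j}(A_{b}))\ge\varepsilon/3>0$ for every $j$ and every $a\neq b$, hence $d_{\min}(A_{a},A_{b})>0$; so $\{A_{1},\dots,A_{d+1}\}$ exhibits $R_{\emptyset}$ as non-degenerate, a contradiction.

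With this implication in hand, the proof concludes quickly. Pick a coordinate-generic set $P=\{p^{(1)},\dots,p^{(r)}\}\subseteq K$ of maximal cardinality; then $1\le r\le d$ (the lower bound because $K\neq\varnothing$ and a single point is vacuously coordinate-generic, the upper bound by the implication above). For any $q\in K\setminus P$, the set $P\cup\{q\}$ is not coordinate-generic; since $P$ itself is, the obstruction must involve $q$, so there are $i\in\{1,\dots,r\}$ and $j\in\{1,\dots,d\}$ with $\pi_{j}(q)=\pi_{j}(p^{(i)})$. As each $p^{(i)}$ also lies on the hyperplane $\{x:\pi_{j}(x)=\pi_{j}(p^{(i)})\}$, we conclude
\[
K\ \subseteq\ \bigcup_{i=1}^{r}\bigcup_{j=1}^{d}\bigl\{x\in\R^{d}:\pi_{j}(x)=\pi_{j}(p^{(i)})\bigr\},
\]
a union of at most $rd\le d^{2}$ axis-parallel hyperplanes, which is the claim.

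The argument is elementary; the one place to be careful is the first step, where $d+1$ coordinate-generic \emph{points} of $K$ must be promoted to $d+1$ pairwise $d_{\min}$-separated \emph{cells of one and the same level $k$} — this is precisely why $k$ is taken uniformly large (so that every level-$k$ cell has diameter $<\varepsilon/3$) and why the disjointness of the nested family is invoked (so each $p_{a}$ determines a unique level-$k$ cell). As a sanity check, in $\R^{1}$ the statement is vacuous: disjoint compact connected subsets of $\R$ are automatically at positive $d_{\min}$-distance, so no node of any nested representation is degenerate.
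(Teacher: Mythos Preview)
Your proof is correct and follows the same two-step architecture as the paper's: first establish that $K$ cannot contain $d+1$ coordinate-generic points (equivalently, the paper's Claim that among any $d+1$ points some two share a coordinate), then take a maximal coordinate-generic subset $P$ of size $r\le d$ and cover $K$ by the $rd\le d^{2}$ axis-parallel hyperplanes through the points of $P$. Your argument for the first step is actually a bit cleaner than the paper's: you work by contrapositive and pass to a single sufficiently deep level $k$ to exhibit non-degeneracy directly, whereas the paper invokes degeneracy at every level $n$, obtains a sequence of triples $(i_n,j_n,k_n)$, and then uses pigeonhole plus a limiting argument to conclude $\pi_{k_\ast}(x_{i_\ast})=\pi_{k_\ast}(x_{j_\ast})$.
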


\begin{proof}
    We first prove the following claim.

\medskip

\noindent\textit{Claim}. Suppose that $R_{\emptyset}$ is degenerate. Then, for each set of $d+1$ points in $C$, there exist $2$ distinct points that lie in an axis parallel hyperplane.

\medskip

        \noindent\textit{Proof of Claim}.
Let $x_1, x_2, \cdots, x_{d+1} \in K$. Then, for each $i \in \{ 1, \cdots, d+1\}$, there exist $\sigma_{i,1}\sigma_{i,2}\sigma_{i,3}\cdots\in \Sigma^{\infty}$ such that 
\begin{equation}\label{eq_x_i}
    \{x_i\} = \bigcap_{n=1}^\infty R_{\sigma_{i,1}\cdots\sigma_{i,n}}.
\end{equation}
Since $R_{\emptyset}$ is degenerate by assumption, for all $n\in\N$, the compact connected sets $\{R_{\sigma_{1,1}\cdots \sigma_{1,n}},\cdots, R_{\sigma_{d+1,1}\cdots \sigma_{d+1,n}}\}$ satisfies 
$$
d_{\min} \left( R_{\sigma_{i,1}\cdots \sigma_{i,n}}, R_{\sigma_{j,1}\cdots \sigma_{j,n}} \right) =0
$$
for all $1\le i<j\le d+1$. Hence, for each $n\in\N$, there exists $(i_n,j_n)\in\{1,\cdots d+1\}\times\{1,\cdots d+1\} $  and $k_n\in \{1,\cdots ,d\}$ such that 
$$
d(\pi_{k_n}( R_{\sigma_{i_n,1}\cdots \sigma_{i_n,n}}),  \pi_{k_n} (R_{\sigma_{j_n,1}\cdots \sigma_{j_n,n}}) ) = 0.
$$
Since for each $n\in \N$, $(i_n,j_n,k_n)$ are taken from a finite set, there exists $(i_{\ast},j_{\ast},k_{\ast})$ such that 
$$
d\left(\pi_{k_{\ast}}( R_{\sigma_{i_{\ast},1}\cdots \sigma_{i_{\ast},n}}),  \pi_{k_{\ast}} (R_{\sigma_{j_{\ast},1}\cdots \sigma_{j_{\ast},n}}) \right) = 0
$$
holds for infinitely many $n$, which, for simplicity of notation, we may assume it holds for all $n$. In particular, this means that $\pi_{k_{\ast}}( R_{\sigma_{i_{\ast},1}\cdots \sigma_{i_{\ast},n}})$ and $\pi_{k_{\ast}}( R_{\sigma_{j_{\ast},1}\cdots \sigma_{j_{\ast},n}})$ overlaps, and from (\ref{eq_x_i}), we conclude that 
 $$
 |\pi_{k^{\ast}}(x_{i^{\ast}})-\pi_{k^{\ast}}(x_{j^{\ast}})| \le |\pi_{k_{\ast}}( R_{\sigma_{i_{\ast},1}\cdots \sigma_{i_{\ast},n}})|+|\pi_{k_{\ast}}( R_{\sigma_{j_{\ast},1}\cdots \sigma_{j_{\ast},n}})|\to 0
 $$
as $n\to \infty$ since the diameter of $R_{\sigma}$ tends to zero as $\sigma$ gets longer.  Hence, $x_{i^{\ast}}$ and $x_{j^{\ast}}$ lies in the same axis parallel plane. 

\medskip

With this claim, we let
\begin{align*}
    k_{\min} &= \min\{k\ge 2: \mbox{$\forall$ $k$ distinct points in $C$, $\exists$ $2$ }\\
    &\mbox{distinct points that lie in an axis parallel hyperplane.} \}.
\end{align*}


The claim implies that $k_{\min}$ exists and is at most $d+1$. We now take $x_1,\cdots x_{k_{\min}-1}$ distinct points in $K$ such that none of the two points lies in the same plane, where we denote $x_i = (x_{i,1}, \cdots x_{i,d})$. Then for all $x \in K - \{x_1,\cdots x_{k_{\min}-1}\}$, $x$ must be in lying in one of the planes $x = x_{i,j}$ for some $i\in\{1,\cdots k_{\min}-1\}$ and $j\in\{1,\cdots d\}$. Hence, $K$ must be inside $(k_{\min}-1)\cdot d\le d^2$ many axis parallel hyperplane. This completes the proof.
 \end{proof}

\begin{lemma}\label{lemma-degenerate2}
    Let $K\subset \R^d$ be a degenerate Cantor set. Then, there exists $\sigma\in\Sigma^{\ast}$ such that $R_{\sigma}$ is degenerate. 
\end{lemma}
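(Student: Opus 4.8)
The plan is to prove the contrapositive: if \emph{every} set $R_\sigma$ of the given nested representation $\mathcal{R}=\{R_\sigma\}$ of $K$ is non-degenerate, then $K$ is a non-degenerate Cantor set, contradicting the hypothesis. So I would assume throughout that $R_\sigma$ is non-degenerate for all $\sigma$, and construct by recursion on the levels of the auxiliary tree $\Sigma_d^{\ast}=\bigcup_{n\ge 1}\{1,\dots,d+1\}^n$ a sub-collection $\{\mathtt{R}_\sigma\}\subset\mathcal{R}$ realizing the non-degeneracy of $K$.

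The recursion runs as follows. Put $\mathtt{R}_\emptyset=R_\emptyset$. Suppose $\mathtt{R}_\sigma$ is defined for some $\sigma$ with $|\sigma|=n$, and that (inductively) $\mathtt{R}_\sigma=R_\tau$ for some finite word $\tau$ with $|\tau|\ge n$. Since $R_\tau$ is non-degenerate, the definition of non-degeneracy of $R_\tau$ yields an integer $k=k(\tau)\ge 1$ and sets $A_1,\dots,A_{d+1}$ among $\{R_{\tau\tau'}:\tau'\in\Sigma^k(\tau)\}$ with $d_{\min}(A_p,A_q)>0$ for $p\ne q$; set $\mathtt{R}_{\sigma 1}:=A_1,\dots,\mathtt{R}_{\sigma(d+1)}:=A_{d+1}$. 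Each $\mathtt{R}_{\sigma j}=R_{\tau\tau'}$ satisfies $|\tau\tau'|=|\tau|+k\ge n+1$, so the inductive hypothesis is preserved. This defines $\mathtt{R}_\sigma$ for all $\sigma\in\Sigma_d^{\ast}$.

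I would then check that $\{\mathtt{R}_\sigma:\sigma\in\Sigma_d^{\ast}\}$ is a nested representation (Definition~\ref{nested}) of $\widehat{C}:=\bigcap_{n\ge 1}\bigcup_{\sigma\in\Sigma_d^n}\mathtt{R}_\sigma$ and that $\widehat{C}\subseteq K$. Each $\mathtt{R}_\sigma$ is a nonempty compact connected set; for $\sigma\in\Sigma_d^n$ the children $\mathtt{R}_{\sigma 1},\dots,\mathtt{R}_{\sigma(d+1)}$ are contained in $\mathtt{R}_\sigma$ and pairwise disjoint, since $d_{\min}(\mathtt{R}_{\sigma p},\mathtt{R}_{\sigma q})>0$ forces disjointness; and $\max_{\sigma\in\Sigma_d^n}\operatorname{diam}(\mathtt{R}_\sigma)\to 0$, because $\mathtt{R}_\sigma=R_\tau$ with $|\tau|\ge n$ gives $\mathtt{R}_\sigma\subseteq R_{\tau|_n}$ and hence $\operatorname{diam}(\mathtt{R}_\sigma)\le\max_{|\rho|=n}\operatorname{diam}(R_\rho)\to 0$ by axiom~(3) for $\mathcal{R}$. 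Therefore $\widehat{C}$ is a Cantor set. Finally, any $x\in\widehat{C}$ equals $\bigcap_n R_{\tau_n}$ for an increasing chain $\tau_1\prec\tau_2\prec\cdots$ of words, each $R_{\tau_n}$ meets $K$, and $\operatorname{diam}(R_{\tau_n})\to 0$, so $x$ is a limit of points of $K$ and thus $x\in K$; hence $\widehat{C}\subseteq K$. By construction $\{\mathtt{R}_{\sigma 1},\dots,\mathtt{R}_{\sigma(d+1)}\}$ are precisely the non-degenerate components of $\mathtt{R}_\sigma$ for every $\sigma$, so $K$ is non-degenerate --- the required contradiction.

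I do not expect a serious obstacle: the lemma is essentially the bookkeeping observation that non-degeneracy of every $R_\sigma$ lets one iterate the ``$d+1$ well-separated descendants'' step indefinitely. The only points needing a little care are the two-tier indexing (a single level of $\Sigma_d$ corresponds to $k(\tau)\ge 1$ levels of the original $\Sigma$-tree) and the resulting verification that $\operatorname{diam}(\mathtt{R}_\sigma)\to 0$, together with the routine check that the structural identity $\mathtt{R}_\sigma\cap\widehat{C}=\bigcup_{j=1}^{d+1}(\mathtt{R}_{\sigma j}\cap\widehat{C})$ of Definition~\ref{nested} holds --- which it does because the $\mathtt{R}$'s are disjoint at each level and nested between levels.
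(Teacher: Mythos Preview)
Your proposal is correct and follows essentially the same approach as the paper: prove the contrapositive by recursively applying the non-degeneracy of each $R_\sigma$ to produce $d+1$ well-separated descendants, thereby building the sub-Cantor set $\widehat{C}$ witnessing non-degeneracy of $K$. The paper's proof is a two-sentence sketch of this same recursion; you have carefully filled in the details (the two-tier indexing, the diameter bound via $|\tau|\ge n$, the inclusion $\widehat{C}\subseteq K$, and the verification that $\{\mathtt{R}_\sigma\}$ is a genuine nested representation), none of which the paper spells out.
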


\begin{proof}
    Suppose that for all $\sigma\in\Sigma^{\ast}$, $R_{\sigma}$ is non-degenerate. Then, starting from $R_{\emptyset}$, we can find $d+1$ many ${\mathtt R}_i$ at certain level $k$ such that they have a positive $d_{\min}$. From our assumption, all ${\mathtt R}_i$ are non-degenerate, so each $\mathtt{R}$ has $d+1$ descendants that are non-degenerate. Continuing inductively, we are able to construct a sub-Cantor set that satisfies (\ref{eqhatC}), meaning that $K$ is non-degenerate. 
\end{proof}

\begin{lemma}\label{lemma-degenerate3}
    Let $K\subset {\R}^{d-1}$ be a non-degenerate Cantor set.  Then there exists an orthogonal linear transformation ${\mathsf T}$ on $\R^d$ such that the image ${\mathsf T}(K\times \{0\}+{\bf x})$ is non-degenerate on $\R^d$ for all ${\bf x}\in\R^d$. 
\end{lemma}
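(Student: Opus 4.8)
The plan is to produce a single, completely explicit orthogonal transformation — one that does not even depend on $K$ — and to build by hand a sub-Cantor set of the image that realises non-degeneracy. I would take $\mathsf T$ to be the identity on the coordinates $x_1,\dots,x_{d-2}$ and a rotation by $\pi/4$ in the $(x_{d-1},x_d)$-plane, so that $\mathsf T(z_1,\dots,z_{d-1},0)=(z_1,\dots,z_{d-2},\tfrac{1}{\sqrt2}z_{d-1},-\tfrac{1}{\sqrt2}z_{d-1})$. Since $\mathsf T$ is linear and $d_{\min}$ is translation invariant, $\mathsf T(K\times\{0\}+\mathbf x)$ is a translate of $\mathsf T(K\times\{0\})$ for every $\mathbf x$, and non-degeneracy is translation invariant; so it suffices to show $\mathsf T(K\times\{0\})$ is non-degenerate.

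Next I would unpack the hypothesis. Since $K\subset\R^{d-1}$ is non-degenerate, it contains a sub-Cantor set $\widehat K=\bigcap_n\bigcup_{\sigma\in\Sigma_{d-1}^n}\mathtt S_\sigma$ whose nested representation has the property that each $\mathtt S_\sigma$ has exactly $d$ children $\mathtt S_{\sigma 1},\dots,\mathtt S_{\sigma d}$ (its non-degenerate components, recalling that $\Sigma_{d-1}^n=\{1,\dots,d\}^n$), pairwise $d_{\min}$-separated in $\R^{d-1}$. Non-degeneracy of $\mathsf T(K\times\{0\})$ in $\R^d$ instead requires $d+1$ separated children at each node, so the first real step is a "grandchildren trick": I would build a sub-Cantor set $K''\subseteq\widehat K$ with a $\Sigma_d^{\ast}$-indexed nested representation $\{\mathtt R''_\sigma\}$ by assigning, to a node whose box is $\mathtt S_\tau$, the $d+1$ children $\mathtt R''_{\sigma j}=\mathtt S_{\tau 1 j}$ for $j=1,\dots,d$ and $\mathtt R''_{\sigma(d+1)}=\mathtt S_{\tau 2}$. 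These are compact connected, pairwise disjoint, nested inside $\mathtt S_\tau$, and their diameters tend to $0$ along every branch (the $\widehat K$-word length grows at each step); moreover they are pairwise $d_{\min}$-separated in $\R^{d-1}$, since $\mathtt S_{\tau 11},\dots,\mathtt S_{\tau 1d}$ are the non-degenerate components of $\mathtt S_{\tau 1}$ and, by monotonicity of $d_{\min}$ under set inclusion, $d_{\min}(\mathtt S_{\tau 1 j},\mathtt S_{\tau 2})\ge d_{\min}(\mathtt S_{\tau 1},\mathtt S_{\tau 2})>0$. Hence $K''$ is a Cantor set with $K''\subseteq K$.

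Then I would apply $\mathsf T$. At each node the $d+1$ boxes $\mathtt R''_{\sigma j}\times\{0\}\subset\R^d$ have pairwise $d_{\min}$-separated projections onto the coordinates $1,\dots,d-1$ and collapse to $\{0\}$ in coordinate $d$. Under $\mathsf T$ the new coordinate-$j$ projections ($j\le d-2$) are unchanged, hence still separated with positive distance, while the new coordinate-$(d-1)$ and coordinate-$d$ projections are $\pm\tfrac{1}{\sqrt2}$ times the old coordinate-$(d-1)$ projections — nonzero scalings of $d_{\min}$-separated compact sets, hence again separated with positive distance. Therefore the $d+1$ sets $\mathsf T(\mathtt R''_{\sigma j}\times\{0\})$ are pairwise $d_{\min}$-separated in $\R^d$, so $\{\mathsf T(\mathtt R''_\sigma\times\{0\})\}$ is a nested representation of the Cantor set $\mathsf T(K''\times\{0\})$ whose children at every node are exactly its non-degenerate components. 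This exhibits $\mathsf T(K''\times\{0\})\subseteq\mathsf T(K\times\{0\})$ as a witness of non-degeneracy, and translating by $\mathsf T\mathbf x$ settles the general $\mathbf x$.

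The only substantive points are the two highlighted above: compensating for the "$d$ versus $d+1$" mismatch via grandchildren, and observing that the chosen rotation turns "$d_{\min}$-separated in $\R^{d-1}$" into "$d_{\min}$-separated in $\R^d$" for the embedded sets — which works precisely because $d_{\min}$-separation in $\R^{d-1}$ already controls the single coordinate $x_{d-1}$, and the rotation feeds that coordinate into both $x_{d-1}$ and $x_d$ with nonzero weights, while leaving the other coordinates untouched. Everything else is routine bookkeeping: that $K''$ is a Cantor set, that the diameter condition (3) of Definition \ref{nested} holds, and (if one insists on the letter of the definition) that the sub-collection $\{\mathsf T(\mathtt R''_\sigma\times\{0\})\}$ can be completed to a nested representation of the ambient Cantor set.
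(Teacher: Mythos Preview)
Your proof is correct and follows essentially the same strategy as the paper: pick an orthogonal map that feeds one $\R^{d-1}$-coordinate into two $\R^d$-coordinates with nonzero weights (you rotate in the $(x_{d-1},x_d)$-plane, the paper instead sends $\mathbf e_1\mapsto(-\tfrac1{\sqrt2},\tfrac1{\sqrt2},0,\dots,0)$ and shifts the remaining basis vectors), then use a grandchildren argument to pass from $d$ to $d+1$ separated components. The only cosmetic differences are that you perform the grandchildren step \emph{before} applying $\mathsf T$ and use the specific selection $\{\mathtt S_{\tau 11},\dots,\mathtt S_{\tau 1d},\mathtt S_{\tau 2}\}$, whereas the paper applies $\mathsf T$ first and then picks any $d+1$ boxes from the full grandchild set $\{\mathtt R_{\sigma st}:s,t\in\{1,\dots,d\}\}$; both choices work for the same monotonicity reason you state.
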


\begin{proof}
Note that it suffices to find an orthogonal linear transformation ${\mathsf T}$ such that ${\mathsf T}(K\times \{0\})$ is non-degenerate, since ${\mathsf T}(K\times \{0\}+{\bf x})$ just differs from ${\mathsf T}(K\times \{0\})$ by a translation. 
    Let ${\bf e}_1,\cdots {\bf e}_d$ be the standard basis of $\R^d$. Consider the subspace $W = \{(x_1,\cdots x_d)\in\R^d: x_1+x_2 = 0\}$. Then, using elementary linear algebra, $\{{\bf v}_1 = (-\frac{1}{\sqrt{2}},\frac{1}{\sqrt{2}},0\cdots, 0), {\bf e_3},\cdots {\bf e}_d\}$ is an orthonormal basis for $W$. We define the linear map $\mathsf{T}$ on $\R^d$ by sending 
    $$
    {\mathsf T}({\bf e}_1) = {\bf v}_1,   \   {\mathsf T}({\bf e}_i) = {\bf e}_{i+1}, i= 2,\cdots d-1, \  {\mathsf T}({\bf e}_d) = {\bf f}_d
    $$
    where $\mathbf{f}_d$ spans the orthogonal complement $W^{\perp}$ of $W$, i.e., $W^{\perp} = \mbox{span}({\bf f}_d)$. The linear map is clearly an orthogonal transformation and is therefore invertible and it maps the $x_d$-plane onto the subspace $W$. 

    \medskip

    We now let $\widehat{K}$ be the non-degenerate Cantor set inside $K\subset \R^{d-1}$. Let us represent it as in (\ref{eq_C_nested}) with ${\mathtt R}_{\sigma}\subset \R^{d-1}$. Now, we can write 
    $$
    \widehat{K}\times \{0\} = \bigcap_{n=1}^{\infty} \bigcup_{\sigma\in\Sigma_{d-1}^n} {\mathtt R}_{\sigma}\times\{0\},  \ \ {\mathsf T}(\widehat{K}\times \{0\}) = \bigcap_{n=1}^{\infty} \bigcup_{\sigma\in\Sigma_{d-1}^n} {\mathsf T}({\mathtt R}_{\sigma}\times\{0\})
    $$ 
 We now claim that for all $\sigma\in\Sigma_{d-1}^n$, 
 \begin{equation}\label{eq_d_min_claim}
 d_{\min} ({\mathsf T}({\mathtt R}_{\sigma p}\times\{0\}), {\mathsf T}({\mathtt R}_{\sigma q}\times\{0\}))>0
 \end{equation} 
 for all $1\le p<q\le d$. Take ${\bf x} = (x_1,\cdots, x_{d-1}, 0)\in {\mathtt R}_{\sigma p}\times\{0\} $, ${\bf y}  = (y_1,\cdots, y_{d-1}, 0)\in {\mathtt R}_{\sigma q}\times\{0\}.$  From the definition of $d_{\min}$, $$|x_i-y_i|\ge d_{\min}(\mathtt{R}_{\sigma p}, {\mathtt R}_{\sigma q}): = k_0.$$ By the definition of ${\mathsf T}$, 
 $$
 {\mathsf T}({\bf x}) = (-\frac{1}{\sqrt{2}}x_1,\frac{1}{\sqrt{2}}x_1,x_2,\cdots, x_{d-1}), \  {\mathsf T}({\bf y}) = (-\frac{1}{\sqrt{2}}y_1,\frac{1}{\sqrt{2}}y_1,y_2,\cdots, y_{d-1}).
 $$
 Hence, all coordinates of of ${\mathsf T}({\bf x})$ and ${\mathsf T}({\bf y})$ are distinct by a distance of $\frac{1}{\sqrt{2}}k_0$. This shows that (\ref{eq_d_min_claim}) holds. From this claim, we can construct a non-degenerate sub-Cantor set from $T(\widehat{K}\times \{0\})$ by inductively taking $d+1$ non-degenerate components of  ${\mathtt R}_{\sigma}$ from $\{{\mathtt R}_{\sigma st}: s,t \in \{1,\cdots d\}\}$. 
\end{proof}


 \medskip

We are now ready to conclude our main theorem in this section.

\begin{theorem}\label{lemma-degenerate4}
    Let $K\subset {\R}^{d}$ be a Cantor set.  Then there exists an invertible linear transformation ${\mathsf T}$ on $\R^d$ such that the image ${\mathsf T}(K)$ is non-degenerate on $\R^d$. 
\end{theorem}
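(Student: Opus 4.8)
The plan is to induct on the dimension $d$, using the structural results already established in this section (Lemmas \ref{lem-6.1}, \ref{lemma-degenerate2}, \ref{lemma-degenerate1}, and \ref{lemma-degenerate3}) together with a Baire category argument to descend into an axis-parallel hyperplane. For the base case $d=1$, every Cantor set $K\subset\R$ is already non-degenerate: at each stage a sub-Cantor set of $\R$ contains a bounded gap, hence splits into two pieces with disjoint images under $\pi_1$ (namely the two pieces themselves, since $d=1$), and iterating produces the sub-Cantor set indexed by $\Sigma_1^{\ast}=\{1,2\}^{\ast}$ required by the definition. So here one takes ${\mathsf T}$ to be the identity.

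For the inductive step, assume the statement in dimension $d-1$ and fix a nested representation of $K\subset\R^d$ (e.g.\ the dyadic-connected-components one). If $K$ is non-degenerate, take ${\mathsf T}=I_d$. Otherwise $K$ is degenerate, so by Lemma \ref{lemma-degenerate2} there is $\sigma$ with $R_\sigma$ degenerate; by Lemma \ref{lem-6.1} every set in the subtree rooted at $\sigma$ is degenerate, so Lemma \ref{lemma-degenerate1}, applied to the sub-Cantor set $\widetilde K:=K\cap R_\sigma$ with the inherited representation, shows $\widetilde K$ lies in a union of at most $d^2$ axis-parallel hyperplanes $H_1,\dots,H_m$. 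Since $\widetilde K=\bigcup_{i=1}^m(\widetilde K\cap H_i)$ is a finite union of closed sets and $\widetilde K$ is a Cantor set, the Baire category theorem yields an index $i$ for which $\widetilde K\cap H_i$ has nonempty interior relative to $\widetilde K$; choose inside it a sub-Cantor set $K''\subset H_i$.

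It then suffices to produce an invertible linear map of $\R^d$ making $K''$ non-degenerate, because $K''\subset K$ is a sub-Cantor set, so its image witnesses non-degeneracy of ${\mathsf T}(K)$. Write $H_i=\{x:x_{j_0}=c\}$, let $\rho$ be the orthogonal coordinate swap $j_0\leftrightarrow d$ so that $\rho(K'')\subset\{x_d=c\}$, and let $L_0\subset\R^{d-1}$ be $\rho(K'')$ with its constant last coordinate dropped; this is a Cantor set in $\R^{d-1}$. By the inductive hypothesis there is an invertible linear ${\mathsf S}:\R^{d-1}\to\R^{d-1}$ with ${\mathsf S}(L_0)$ non-degenerate; setting ${\mathsf S}^{\sharp}(y,t):=({\mathsf S}y,t)$ on $\R^d$ gives ${\mathsf S}^{\sharp}(\rho(K''))=({\mathsf S}(L_0)\times\{0\})+c\,{\bf e}_d$. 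Now apply Lemma \ref{lemma-degenerate3} to the non-degenerate Cantor set ${\mathsf S}(L_0)\subset\R^{d-1}$: it provides an orthogonal ${\mathsf O}$ on $\R^d$ for which ${\mathsf O}\big(({\mathsf S}(L_0)\times\{0\})+{\bf x}\big)$ is non-degenerate for every ${\bf x}\in\R^d$, in particular for ${\bf x}=c\,{\bf e}_d$. Then ${\mathsf T}:={\mathsf O}\circ{\mathsf S}^{\sharp}\circ\rho$ is invertible and linear, and ${\mathsf T}(K'')$ is non-degenerate, closing the induction.

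The step that I expect to require the most care is the descent from ``$\widetilde K$ is contained in finitely many axis-parallel hyperplanes'' to ``a genuine sub-Cantor set lies inside a single one'' — this Baire extraction is what lets the dimension induction start — together with the attendant bookkeeping: checking that sub-Cantor sets carry nested representations for which the notion of (non-)degeneracy is meaningful, and that obtaining non-degeneracy merely for a sub-Cantor set $K''\subset K$ is enough, since non-degeneracy of a Cantor set is by definition exactly the existence of such a sub-Cantor set. The composition with Lemma \ref{lemma-degenerate3}'s orthogonal map is essential and not cosmetic: after ${\mathsf S}^{\sharp}\circ\rho$ the set still lies in the axis-parallel hyperplane $\{x_d=c\}$ and is therefore degenerate in $\R^d$, so it must be rotated off that hyperplane without spoiling the $(d-1)$-dimensional non-degeneracy just produced — and the ``for all ${\bf x}$'' clause of that lemma is what makes this possible while keeping ${\mathsf T}$ linear rather than merely affine.
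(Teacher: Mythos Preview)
Your proof is correct and follows essentially the same inductive route as the paper's own argument, invoking Lemmas \ref{lem-6.1}, \ref{lemma-degenerate1}, \ref{lemma-degenerate2}, and \ref{lemma-degenerate3} in the same places. You are in fact more explicit than the paper on two points it glosses over: the Baire-category extraction of a genuine sub-Cantor set inside a single hyperplane, and the handling of the affine shift $\{x_{j_0}=c\}$ with $c\neq 0$ via the ``for all ${\bf x}$'' clause of Lemma \ref{lemma-degenerate3} (the paper simply declares ``without loss of generality $\{x_d=0\}$'').
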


\begin{proof}
    We prove by induction on the dimension $d$ that all Cantor set $K$ on $\R^d$ satisfy the property in the statement.  When $d = 2$ and if $K$ is non-degenerate, we are done. Suppose that $K$ is degenerate. By Lemma \ref{lemma-degenerate2}, there exists $R_{\sigma}$ in a nest representation of $C$ such that $R_{\sigma}$ is degenerate. By Lemma \ref{lem-6.1} and \ref{lemma-degenerate1}, $K\cap R_{\sigma}$ must be contained in a finite union of axis-parallel planes. Hence, we can take a sub-Cantor set $\widehat{K}\subset K$ lying in an  axis parallel line. Note that $\widehat{K}$, as a Cantor set on $\R^1$, is always non-degenerate. By Lemma \ref{lemma-degenerate3}, there exists an invertible linear transformation ${\mathsf T}$ such that ${\mathsf T}(\widehat{K})$ is non-degenerate.

    \medskip

    Suppose that the lemma is true for $d-1$. If $K\subset\R^d$ is non-degenerate, then we are done by taking simply the identity transformation. If $K\subset\R^d$ is degenerate, we can apply Lemma \ref{lem-6.1}, \ref{lemma-degenerate1} and \ref{lemma-degenerate2} to obtain a sub-Cantor set  lying in an axis parallel plane.  Without loss of generality, we can assume that the sub-Cantor set lies on the subspace is $\{x_d = 0\}$, so that it can be represented as $\widehat{K}\times \{0\}$.  From the induction hypothesis, there exists a linear map ${\mathsf T}_0:\R^{d-1}\to\R^{d-1}$ such that ${\mathsf T}_0(\widehat{K})$ is non-degenerate on $\R^{d-1}$. By lemma \ref{lemma-degenerate3}, there exists a linear map ${\mathsf T}_1$ such that ${\mathsf T}_1 ({\mathsf T}_0(\widehat{K})\times \{0\})$ is non-degenerate on $\R^d$.
    
    Hence, the linear map ${\mathsf T}_2 : \R^d\to\R^d$ defined by ${\mathsf T}_2 (({\bf x},0)+ {\bf e}_d) = {\mathsf T}_0({\bf x}, 0)+{\bf e}_d$ is an invertible linear map and ${\mathsf T}_2(\widehat{K}\times \{0\}) = {\mathsf T}_0(\widehat{K})\times \{0\}$. Therefore, ${\mathsf T}_1\circ {\mathsf T}_2(\widehat{K}\times \{0\})$ is non-degenerate. As $\widehat{K}\times\{0\}\subset K$, $\left({\mathsf T}_1\circ {\mathsf T}_2\right)(K)$ is therefore, non-degenerate. This completes the proof.
\end{proof}

\section{When is a Cantor set uniformly non-degenerate?}\label{section-non-und}

\begin{definition}
    Let $K = \bigcap_{n=1}^{\infty} \bigcup_{\sigma\in \Sigma^n} R_{\sigma}$ be a nested representation of $K$. Let $\kappa>0$ be a constant. For each $k\in\N$, let $\{A_1,\cdots, A_{d+1}\}\subset\{R_{\sigma\sigma'}: \sigma'\in \Sigma^k(\sigma)\}$.  We  define the following ratios:
     $$
     {\mathcal A}^k(R_{\sigma}, A_1,\cdots, A_{d+1}) =   \min_{p\ne q}\min_{ i\ne j}\left\{\frac{|\pi_{i}(x)-\pi_i(x')|}{|\pi_j(x)-\pi_j(x')|}: x\in A_{p}, x'\in A_{q}, \right\};
     $$
         $$
     {\mathcal B}^k(R_{\sigma}, A_1,\cdots, A_{d+1})=   \max_{p\ne q}\max_{ i\ne j}\left\{\frac{|\pi_{i}(x)-\pi_i(x')|}{|\pi_j(x)-\pi_j(x')|}: x\in A_p, x'\in A_q\right\}.
     $$
     Also, we define ${\mathcal A}^k(R_{\sigma}, A_1,\cdots, A_{d+1}) = 0$ and ${\mathcal B}^k(R_{\sigma}, A_1,\cdots, A_{d+1}) = \infty$ if $d_{\min} (A_p,A_q) = 0$ for some $p\ne q$. 
      We say that $R_{\sigma}$ is {\bf $\kappa$-non-degenerate} if there exists $k\ge 1$ and there exists  $\{A_1,\cdots, A_{d+1}\}\subset\{R_{\sigma\sigma'}: \sigma'\in \Sigma^k(\sigma)\}$ such that 
    \begin{equation}\label{eq-Kappa-control-0}
    \kappa^{-1} \le {\mathcal A}^k(R_{\sigma}, A_1,\cdots, A_{d+1})\le {\mathcal B}^k(R_{\sigma}, A_1,\cdots, A_{d+1})\le  \kappa.
    \end{equation}
    Otherwise, we say that $R_{\sigma}$ is {\bf $\kappa$-degenerate}.
    We say that $K$ is {\bf $\kappa$-uniformly non-degenerate} if there exists a sub-Cantor set $\widehat{K}= \bigcap_{n=1}^{\infty}\bigcup_{\sigma\in \Sigma_d^n} {\mathtt R}_{\sigma}$ and \begin{equation}\label{eq-Kappa-control}\kappa^{-1}\le {\mathcal A}^1 ({\mathtt R}_{\sigma}, {\mathtt R}_{\sigma1},\cdots,{\mathtt R}_{\sigma(d+1)} )\le {\mathcal B}^1 ({\mathtt R}_{\sigma}, {\mathtt R}_{\sigma1},\cdots,{\mathtt R}_{\sigma(d+1)} )\le \kappa.\end{equation} for all $\sigma\in \Sigma_d^{\ast}$. 
    
\end{definition}

It is clear that $K$ is uniformly non-degenerate if and only if there exists $\kappa>0$ such that $K$ is $\kappa$-uniformly non-degenerate.   We will assume throughout this section that $K\subset \R^d$ is a Cantor set with a nested representation 
    $$
    K = \bigcap_{n=1}^{\infty} \bigcup_{\sigma\in \Sigma^n} R_{\sigma}.
    $$

\begin{lemma}\label{lemma-und1}
Suppose that $K$ is not {\bf u.n.d.}. Then for all $\kappa>0$, there exists $R_{\sigma}$   such that $R_{\sigma}$ is  $\kappa-$degenerate. Moreover, for all $\{x_1,\cdots, x_{d+1}\}\subset R_{\sigma}\cap K$,  there exists $x_p\ne x_q$ and there exists $i\ne j\in \{1,\cdots, d+1\}$ such that  
\begin{equation}\label{eq-lemma7.2-pi}
    |\pi_i(x_p)-\pi_i(x_q)|\le \kappa^{-1}|\pi_j(x_p)-\pi_j(x_q)|.
\end{equation}
\end{lemma}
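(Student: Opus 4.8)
The plan is to prove this in two steps, following the template of Lemmas \ref{lemma-degenerate2} and \ref{lemma-degenerate1} but carrying the metric comparability along. First I would dispose of the trivial case $d=1$: when $d=1$, condition (\ref{comparable}) holds automatically with $C_1=C_2=1$, so every Cantor set on $\R^1$ is \textbf{u.n.d.} and the hypothesis is vacuous. Hence from now on $d\ge 2$.

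\emph{Step 1: producing a $\kappa$-degenerate $R_\sigma$.} Fix $\kappa>0$ and suppose for contradiction that every $R_\sigma$ in the nested representation of $K$ is $\kappa$-non-degenerate. Then I would iterate exactly as in the proof of Lemma \ref{lemma-degenerate2}: starting from $R_{\emptyset}$, $\kappa$-non-degeneracy hands us $d+1$ descendants $\{A_1,\dots,A_{d+1}\}\subset\{R_{\emptyset\sigma'}:\sigma'\in\Sigma^k(\emptyset)\}$ at some relative level $k$ for which (\ref{eq-Kappa-control-0}) holds; relabel these as ${\mathtt R}_{1},\dots,{\mathtt R}_{d+1}$. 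Since $\kappa<\infty$, the convention $\mathcal{A}^k=0$, $\mathcal{B}^k=\infty$ in the degenerate case together with (\ref{eq-Kappa-control-0}) forces $d_{\min}({\mathtt R}_{p},{\mathtt R}_{q})>0$ for all $p\ne q$, so these pieces are pairwise disjoint. Applying $\kappa$-non-degeneracy again to each ${\mathtt R}_{i}$ and continuing inductively builds a family $\{{\mathtt R}_\tau:\tau\in\Sigma_d^{\ast}\}$ of compact connected sets that are pairwise disjoint at each generation, with $\max_{\tau\in\Sigma_d^n}\mbox{diam}({\mathtt R}_\tau)\to 0$ (by Definition \ref{nested}(3), as each iteration strictly increases the depth inside the original representation). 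The set $\widehat K=\bigcap_{n}\bigcup_{\tau\in\Sigma_d^n}{\mathtt R}_\tau$ is then a sub-Cantor set of $K$ satisfying (\ref{eq-Kappa-control}), i.e.\ $K$ is $\kappa$-uniformly non-degenerate, hence \textbf{u.n.d.}, contradicting the hypothesis. Therefore some $R_\sigma$ is $\kappa$-degenerate.

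\emph{Step 2: the coordinate inequality.} Fix such a $\kappa$-degenerate $R_\sigma$ and take distinct points $x_1,\dots,x_{d+1}\in R_\sigma\cap K$. For each $l$ let $A_{l,n}$ be the relative level-$n$ cylinder $R_{\sigma\, i_1^{(l)}\cdots i_n^{(l)}}$ of the nested representation through $x_l$, so $\{x_l\}=\bigcap_n A_{l,n}$ and $\mbox{diam}(A_{l,n})\to 0$; since the $x_l$ are distinct, there is $N_0$ with $A_{1,n},\dots,A_{d+1,n}$ pairwise distinct for all $n\ge N_0$. Fix $n\ge N_0$. Because $R_\sigma$ is $\kappa$-degenerate, the collection $\{A_{1,n},\dots,A_{d+1,n}\}\subset\{R_{\sigma\sigma'}:\sigma'\in\Sigma^n(\sigma)\}$ violates (\ref{eq-Kappa-control-0}). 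I would inspect the three ways this can fail: (i) $\mathcal{A}^n(R_\sigma,A_{1,n},\dots,A_{d+1,n})<\kappa^{-1}$, giving directly some $p\ne q$, $i\ne j$ and $y\in A_{p,n}$, $y'\in A_{q,n}$ with $|\pi_i(y)-\pi_i(y')|<\kappa^{-1}|\pi_j(y)-\pi_j(y')|$; (ii) $\mathcal{B}^n(R_\sigma,A_{1,n},\dots,A_{d+1,n})>\kappa$, which after swapping the two coordinate indices gives the same kind of inequality; (iii) $d_{\min}(A_{p,n},A_{q,n})=0$ for some $p\ne q$, which, since projections of compact connected sets are compact intervals, yields a coordinate $l$ and points $y\in A_{p,n}$, $y'\in A_{q,n}$ with $\pi_l(y)=\pi_l(y')$, so $|\pi_l(y)-\pi_l(y')|=0\le\kappa^{-1}|\pi_m(y)-\pi_m(y')|$ for any $m\ne l$ (here $d\ge 2$ guarantees such an $m$). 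In every case I obtain indices $p_n\ne q_n$ in $\{1,\dots,d+1\}$, $i_n\ne j_n$ in $\{1,\dots,d\}$, and $y_n\in A_{p_n,n}$, $y_n'\in A_{q_n,n}$ with $|\pi_{i_n}(y_n)-\pi_{i_n}(y_n')|\le\kappa^{-1}|\pi_{j_n}(y_n)-\pi_{j_n}(y_n')|$. The tuples $(p_n,q_n,i_n,j_n)$ lie in a finite set, so along a subsequence they are constant, say $(p,q,i,j)$; along it $y_n\to x_p$ and $y_n'\to x_q$ since $y_n,x_p\in A_{p,n}$, $y_n',x_q\in A_{q,n}$ and the diameters shrink. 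Passing to the limit and using continuity of the projections gives $|\pi_i(x_p)-\pi_i(x_q)|\le\kappa^{-1}|\pi_j(x_p)-\pi_j(x_q)|$ with $x_p\ne x_q$, which is (\ref{eq-lemma7.2-pi}).

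I expect the only genuine subtlety to be the uniform case analysis of how (\ref{eq-Kappa-control-0}) fails — in particular folding the degenerate branch $d_{\min}=0$ into the same conclusion as the two ratio branches — together with the bookkeeping in Step 1 that certifies the iterated family is an honest sub-Cantor set sitting inside $K$; both points are routine once one records that $\kappa<\infty$ forces disjointness and that the depth strictly increases at each iteration, forcing the diameters to $0$.
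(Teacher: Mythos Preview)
Your argument is correct. Step 1 matches the paper's proof verbatim: assume every $R_\sigma$ is $\kappa$-non-degenerate, iterate to build a $(d+1)$-ary sub-Cantor set $\widehat K$ witnessing (\ref{eq-Kappa-control}), and reach a contradiction.

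In Step 2 you take a slightly different route from the paper. The paper argues by contrapositive: it assumes there exist $x_1,\dots,x_{d+1}\in R_\sigma\cap K$ with the \emph{strict} reverse of (\ref{eq-lemma7.2-pi}) holding for \emph{every} pair and every choice of coordinates, then uses continuity of the projections to propagate these strict inequalities to small cylinders $R_{\sigma\sigma_{p,1}\cdots\sigma_{p,n}}$ around the $x_p$, which exhibits a $(d+1)$-tuple of descendants satisfying (\ref{eq-Kappa-control-0}) and hence shows $R_\sigma$ is $\kappa$-non-degenerate. You instead work directly: at each depth $n$ you feed the cylinders $A_{1,n},\dots,A_{d+1,n}$ through the failure of (\ref{eq-Kappa-control-0}), do the three-branch case analysis (including the $d_{\min}=0$ branch, which you correctly handle using $d\ge2$), pigeonhole on the finite index set, and pass to the limit. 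Both arguments are short and equivalent in strength; the paper's contrapositive avoids the case split but needs a moment's thought to see that the strict reverse inequalities are stable under small perturbations, while your direct argument trades that for an explicit (and clean) handling of the degenerate convention $\mathcal{A}^k=0$, $\mathcal{B}^k=\infty$.
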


\begin{proof}
    For the first statement, we prove the contrapositive. Suppose that there exists $\kappa>0$ such that for all $\sigma\in\Sigma^{\ast}$, $R_{\sigma}$ is $\kappa$-non-degenerate. Then starting from $R_{\emptyset}$, we can find $d+1$ many ${\mathtt R}_i$ at certain level $k$ such that (\ref{eq-Kappa-control}) holds. From our assumption, each ${\mathtt R}_i$ is $\kappa$-non-degenerate, so there exist $d+1$  $\kappa$-non-degenerate descendants of $\mathtt{R}_i$. Continuing inductively, we can construct a sub-Cantor set $\widehat{K}$ of $K$ such that all $\sigma\in\Sigma_d^{\ast}$ satisfies (\ref{eq-Kappa-control}). Therefore, $K$ is $\kappa$-non-degenerate, which contradicts our assumption that $K$ is not \textbf{u.n.d.}.

    \medskip

    For the second statement, we suppose for the sake of contradiction that there exists $\{x_1,\cdots, x_{d+1}\}\subset R_{\sigma}\cap K $ such that for all distinct points $x_p\ne x_q$, for all $i\ne j$
\begin{equation}\label{eq_pi_und1}
    |\pi_i(x_p)-\pi_i(x_p')|> \kappa^{-1}|\pi_j(x_q)-\pi_j(x_q')|.
\end{equation}
For each $p=1,\cdots, d+1$, let us write $\{x_p\} = \bigcap_{n=1}^{\infty} R_{\sigma \sigma_{p,1}\cdots \sigma_{p,n}}$. Since $K$ is totally disconnected, by taking sufficiently large $n$, $R_{\sigma \sigma_{i,1}\cdots \sigma_{i,n}}$ has a very small diameter. Since $\pi_i$ are continuous functions, (\ref{eq_pi_und1}) continues to hold for all $x\in R_{\sigma \sigma_{p,1}\cdots \sigma_{p,n}}$ and $x'\in R_{\sigma \sigma_{q,1}\cdots \sigma_{q,n}}$. This implies that
\begin{align*}
    \kappa^{-1} < \frac{|\pi_i (x) - \pi_i(x')|}{|x_j (x) - \pi_j (x')|} < \kappa
\end{align*}
for all $x \in R_{\sigma \sigma_{p,1}\cdots\sigma_{p,n}}$ and $x' \in R_{\sigma \sigma_{q,1}\cdots \sigma_{q,n}}$ and for all $i\neq j \in \{1,\cdots, d+1\}$. This implies that $R_{\sigma}$ is $\kappa$-non-degenerate, which is a contradiction. 
\end{proof}


\begin{lemma}\label{lemma-und2}
    Suppose that $K$ is not \textbf{u.n.d.}. Then, for all $\kappa>0$, there exists a sub-Cantor set $\widehat{K}$ of $K$ such that
    \begin{align*}
        \widehat{K}=\bigcap_{n=1}^\infty \bigcup_{\sigma \in \Sigma_d^n} \texttt{R}_\sigma
    \end{align*}
and the following condition holds:
\begin{equation}\label{eq_dege}
    \forall \sigma \in \Sigma_d^n, \exists p\neq q \in \{1, \cdots, d\} \text{ and } \exists i\neq j \in \{1, \cdots, d+1\}
\end{equation}
$$  
    \mbox{such that for all $x\in{\mathtt R}_{\sigma p} $ and  $x'\in{\mathtt R}_{\sigma q}$,} \ 
|\pi_i(x)-\pi_i(x')|\le \kappa^{-1} |\pi_j(x)-\pi_j(x')|.
$$
\end{lemma}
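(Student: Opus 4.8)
The plan is to run, with a quantitatively worse constant, the reverse of the construction used for producing a uniformly non-degenerate sub-Cantor set: instead of collecting well-separated, comparably-shaped pieces, I would collect pieces along which one coordinate displacement is dominated by another. Fix $\kappa>0$ and fix once and for all an auxiliary constant $\kappa'>\kappa$. Working, as we may, with the canonical dyadic nested representation from the Remark following Definition~\ref{nested} (for which every block $R_\sigma\cap K$ is relatively clopen in $K$, hence an infinite perfect set), the first move is to apply Lemma~\ref{lemma-und1} with the constant $\kappa'$: since $K$ is not \textbf{u.n.d.} it is in particular not $\kappa'$-u.n.d., so there is $\sigma_\ast\in\Sigma^{\ast}$ with $R_{\sigma_\ast}$ being $\kappa'$-degenerate, and moreover every set of $d+1$ distinct points of $R_{\sigma_\ast}\cap K$ contains two points $x_p\ne x_q$ and admits coordinate indices $i\ne j$ with $|\pi_i(x_p)-\pi_i(x_q)|\le(\kappa')^{-1}|\pi_j(x_p)-\pi_j(x_q)|$. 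The one structural fact I would lean on is that $R_{\sigma_\ast\tau}\cap K\subseteq R_{\sigma_\ast}\cap K$ for every $\tau$, so this ``degeneracy of points'' property is inherited verbatim by every descendant block of $R_{\sigma_\ast}$; this is what lets the recursion proceed without ever revisiting Lemma~\ref{lemma-und1}.

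The heart of the argument is a single-generation step: given any descendant block $R=R_{\sigma_\ast\tau}$ and any $\varepsilon>0$, I would produce $d+1$ pairwise disjoint sub-blocks $\mathtt R_1,\dots,\mathtt R_{d+1}$ of $R$, each of diameter $<\varepsilon$, together with indices $p\ne q$ and coordinates $i\ne j$ such that $|\pi_i(x)-\pi_i(x')|\le\kappa^{-1}|\pi_j(x)-\pi_j(x')|$ for all $x\in\mathtt R_p$, $x'\in\mathtt R_q$ --- which is exactly the content of \eqref{eq_dege}. To do this, choose $d+1$ distinct points $y_1,\dots,y_{d+1}$ of $R\cap K$ (possible, as the block is infinite), invoke the inherited property to get --- after relabelling so the distinguished pair is $(y_1,y_2)$ --- coordinates $a\ne b$ with $|\pi_a(y_1)-\pi_a(y_2)|\le(\kappa')^{-1}|\pi_b(y_1)-\pi_b(y_2)|$, and then split into two cases. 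If $|\pi_b(y_1)-\pi_b(y_2)|>0$, then in fact $|\pi_a(y_1)-\pi_a(y_2)|<\kappa^{-1}|\pi_b(y_1)-\pi_b(y_2)|$ strictly because $\kappa'>\kappa$, and since each $\pi_k$ is $1$-Lipschitz, picking blocks $\mathtt R_m\ni y_m$ of sufficiently small diameter (also $<\varepsilon$, and less than half the least pairwise distance of the $y_m$, so that they are disjoint) preserves this strict inequality in the non-strict form on the blocks, with $(i,j)=(a,b)$. If instead $|\pi_b(y_1)-\pi_b(y_2)|=0$, then also $|\pi_a(y_1)-\pi_a(y_2)|=0$; since $y_1\ne y_2$ there is a coordinate $c\ne a$ with $|\pi_c(y_1)-\pi_c(y_2)|>0$, and the $1$-Lipschitz estimates $|\pi_a(x)-\pi_a(x')|\le2\delta$ and $|\pi_c(x)-\pi_c(x')|\ge|\pi_c(y_1)-\pi_c(y_2)|-2\delta$ on blocks of diameter $\delta$ give the desired inequality with $(i,j)=(a,c)$ once $\delta$ is small enough.

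It then remains to iterate. Set $\mathtt R_\varnothing:=R_{\sigma_\ast}$; given $\mathtt R_\sigma$ for $\sigma\in\Sigma_d^{n}$, apply the single-generation step with $\varepsilon=2^{-n}$ to obtain its children $\mathtt R_{\sigma1},\dots,\mathtt R_{\sigma(d+1)}$ and the accompanying witnesses for \eqref{eq_dege}; each child is again a descendant block of $R_{\sigma_\ast}$, so the step may be reapplied. Because the diameters decrease to $0$ and every node splits into $d+1\ge3$ nonempty pieces, $\widehat K:=\bigcap_{n\ge1}\bigcup_{\sigma\in\Sigma_d^{n}}\mathtt R_\sigma$ is a Cantor set with the stated nested representation, and it satisfies \eqref{eq_dege} at every node by construction. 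The main obstacle is concentrated entirely in the degenerate case $|\pi_b(y_1)-\pi_b(y_2)|=0$ of the single-generation step: one cannot just shrink around $y_1,y_2$ keeping the same two coordinates, but must switch to a coordinate that genuinely separates the two points. Apart from that --- and the bookkeeping that each block is infinite so the recursion never stalls, which is precisely why one works with the dyadic representation --- everything reduces to $1$-Lipschitzness of coordinate projections together with the slack gained in passing from $\kappa'$ down to $\kappa$.
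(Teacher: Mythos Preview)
Your proof is correct and follows essentially the same strategy as the paper: invoke Lemma~\ref{lemma-und1} with a worsened constant (your $\kappa'>\kappa$, the paper's $2\kappa$), pass from the resulting point-level degeneracy to block-level degeneracy using the slack, and iterate. The only structural difference is that the paper re-invokes Lemma~\ref{lemma-und1} inside every newly built $\mathtt R_i$ (relying on the observation that $K\cap\mathtt R_i$ is again not \textbf{u.n.d.}), whereas you apply it once and note that the ``for every $d+1$ points'' conclusion is inherited by all descendants of $R_{\sigma_\ast}$; this is a mild economy, not a different argument. You are also more careful than the paper in one place: you explicitly treat the case $|\pi_b(y_1)-\pi_b(y_2)|=0$ by switching to a coordinate $c$ that actually separates $y_1$ and $y_2$, a case the paper's proof passes over silently.
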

\begin{proof}
Using  Lemma \ref{lemma-und1}, we can find $2\kappa$-degenerate $R_\sigma$ and $d+1$ points $x_1,\cdots x_{d+1} \in R_{\sigma}\cap K$ such that (\ref{eq-lemma7.2-pi}) holds for some $x_p\ne x_q \in \{x_1, \cdots, x_{d+1}\}$ and some $i\ne j \in \{1, \cdots, d+1\}$ with constant $(2\kappa)^{-1}$. Similar to the proof in Lemma \ref{lemma-und1}, by considering $R_{\sigma}$ around each $x_p$, $p = 1,\cdots d+1$, we can find ${\mathtt R}_1,\cdots {\mathtt R}_{d+1}$ with $x_i\in {\mathtt R}_i$ such that (\ref{eq-lemma7.2-pi}) holds for all points in ${\mathtt R}_p$ and ${\mathtt R}_q$ with constant $\kappa^{-1}$, which means (\ref{eq_dege}) holds. 

\medskip

We now notice a simple fact that if $K$ is not {\bf u.n.d.}, then $K\cap {\mathtt R}_{i}$ is also not {\bf u.n.d.} for all $i = 1,\cdots d+1$.   Hence, we can apply Lemma \ref{lemma-und1} and the argument we just did on $K\cap {\mathtt R}_{i}$. Then inductively to obtain $\widehat{K}$ that fulfills this lemma.  
\end{proof}



Before we proceed to the main theorem, we make the following simple but important observation. With slight abuse of notation, we say that $x,x'$ are $\kappa$-degenerate  if there exists $i\ne j$ such that $|\pi_i(x)-\pi_i(x')|<\kappa^{-1}|\pi_j(x)-\pi_j(x')|$. In this case, $x-x'$ must be lying in a cone around some axis-parallel hyperplane. More precisely,
\begin{equation}\label{eq-cone}
x-x'\in \bigcup_{i=1}^d {\mathsf P}_i\left(\{(x_1,\cdots x_d): x_1^2+\cdots +x_{d-1}^2< \alpha x_d^2\}\right):=  {\mathcal C}
\end{equation}
where $\alpha = \tan^{-1}(\kappa^{-1})$ and ${\mathsf P}_i$ are the orthogonal transformation mapping the hyperplane $x_d = 0$ onto the hyperplane $x_i = 0$. If $K$ is a compact set inside ${\mathcal C}$, then we can find an orthogonal transformation ${\mathsf O}$ so that ${\mathsf O}(x)$ and $\mathsf{O}(x')$ are $\kappa$-non-degenerate for all $x\ne x'\in K$.
    

\begin{theorem}\label{thm-degenerate5}
    Let $K\subset {\R}^{d}$ be a Cantor set.  Then there exists an orthogonal linear transformation ${\mathsf O}$ on $\R^d$ such that the image ${\mathsf O}(K)$ is {\bf u.n.d.} on $\R^d$. 
\end{theorem}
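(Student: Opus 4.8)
\textbf{Proof plan for Theorem \ref{thm-degenerate5}.}
The plan is to reduce to the degenerate case by an induction on the ambient dimension $d$, exactly paralleling the structure of the proof of Theorem \ref{lemma-degenerate4}, but now keeping track of the comparability constants $\kappa$ throughout. First I would dispose of the easy case: if $K$ is already {\bf u.n.d.}, the identity transformation works. So assume $K$ is not {\bf u.n.d.}. The key input is Lemma \ref{lemma-und2}: for any chosen $\kappa>0$ it produces a sub-Cantor set $\widehat{K}=\bigcap_n\bigcup_{\sigma\in\Sigma_d^n}{\mathtt R}_\sigma$ in which every parent ${\mathtt R}_\sigma$ has two children ${\mathtt R}_{\sigma p},{\mathtt R}_{\sigma q}$ that are $\kappa$-degenerate \emph{uniformly}, i.e.\ $|\pi_i(x)-\pi_i(x')|\le\kappa^{-1}|\pi_j(x)-\pi_j(x')|$ for all $x\in{\mathtt R}_{\sigma p}$, $x'\in{\mathtt R}_{\sigma q}$.

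Next I would use the cone observation \eqref{eq-cone}: such a $\kappa$-degeneracy forces the difference $x-x'$ into the double cone ${\mathcal C}=\bigcup_{i=1}^d{\mathsf P}_i(\{x_1^2+\cdots+x_{d-1}^2<\alpha x_d^2\})$ with $\alpha=\tan^{-1}(\kappa^{-1})$. The goal is to pass from ``every $\sigma$ has \emph{some} degenerate pair of children'' to ``a whole sub-Cantor set sits inside a cone around a single fixed axis-parallel hyperplane''. For this I would argue along a single infinite path and use a pigeonhole/compactness argument: at each level $\sigma$ there is a triple $(p_\sigma,q_\sigma,i_\sigma,j_\sigma)$ from a finite index set witnessing degeneracy; refining repeatedly and passing to a sub-Cantor set where the witnessing index data stabilizes, I get a sub-Cantor set $\widehat{K}'$ all of whose points have pairwise differences in a cone about one fixed hyperplane $x_{j_*}=0$ — equivalently, $\widehat{K}'$ lies within a bounded cone-neighborhood of an affine translate of $\{x_{j_*}=0\}$. (Here one may need to shrink $\kappa$, i.e.\ run Lemma \ref{lemma-und2} with a $\kappa$ small enough that the cone is narrow; this is harmless since $\kappa$ is ours to choose.) Taking $\kappa$ small enough that the cone half-angle is, say, less than some fixed small angle, $\widehat{K}'$ lies in a slab that can be flattened: there is an orthogonal ${\mathsf O}_1$ after which ${\mathsf O}_1(\widehat{K}')$ is contained in an arbitrarily thin neighborhood of the hyperplane $\{x_d=0\}$ — but \emph{thin is not enough}; what I actually want is a sub-Cantor set genuinely lying \emph{on} a hyperplane, so instead I should use Lemma \ref{lemma-degenerate2} together with Lemmas \ref{lem-6.1} and \ref{lemma-degenerate1} applied to the relevant $R_\sigma$: since $K$ being non-{\bf u.n.d.}\ is stronger than being degenerate (a $\kappa$-degenerate $R_\sigma$ for all $\kappa$ is in particular degenerate), $K$ contains a sub-Cantor set $\widehat{K}''$ lying in a genuine axis-parallel hyperplane, hence of the form (after relabeling) $\widehat{K}''\times\{0\}$ with $\widehat{K}''\subset\R^{d-1}$.

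Now the induction engages: by the inductive hypothesis there is an orthogonal ${\mathsf O}_0$ on $\R^{d-1}$ making ${\mathsf O}_0(\widehat{K}'')$ {\bf u.n.d.}\ in $\R^{d-1}$. Then I would prove and invoke the $d$-dimensional analogue of Lemma \ref{lemma-degenerate3} — stated for {\bf u.n.d.}\ rather than merely non-degenerate — namely: if $L\subset\R^{d-1}$ is {\bf u.n.d.}, there is an orthogonal ${\mathsf T}$ on $\R^d$ with ${\mathsf T}(L\times\{0\})$ {\bf u.n.d.}\ on $\R^d$. The same explicit ${\mathsf T}$ used there works: it sends ${\bf e}_1\mapsto(-\tfrac1{\sqrt2},\tfrac1{\sqrt2},0,\dots,0)$, ${\bf e}_i\mapsto{\bf e}_{i+1}$ for $2\le i\le d-1$, ${\bf e}_d\mapsto{\bf f}_d$ spanning $W^\perp$; the computation in that proof shows that if $\mathtt{R}_{\sigma p},\mathtt{R}_{\sigma q}$ have all coordinate-projection separations comparable with constants $C_1,C_2$ (the {\bf u.n.d.}\ condition \eqref{comparable}), then after ${\mathsf T}$ the images again have all coordinate separations comparable, with constants depending only on $C_1,C_2$ (the first coordinate gets split into two equal halves $\pm\tfrac1{\sqrt2}x_1$ and the rest are permuted, so ratios change by at most a fixed factor). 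Composing, ${\mathsf O}:={\mathsf T}\circ{\mathsf O}_2$, where ${\mathsf O}_2$ is the orthogonal extension of ${\mathsf O}_0$ to $\R^d$ fixing the last coordinate, gives ${\mathsf O}(\widehat{K}''\times\{0\})$ {\bf u.n.d.}\ on $\R^d$; since $\widehat{K}''\times\{0\}\subset K$ (after the preliminary relabeling orthogonal transformation), ${\mathsf O}$ applied to $K$ makes it {\bf u.n.d.}, since possessing a {\bf u.n.d.}\ sub-Cantor set is exactly the definition. The base case $d=1$ is trivial — every Cantor set in $\R^1$ is {\bf u.n.d.}\ with $\kappa=1$, since there is only one coordinate and the comparability condition \eqref{comparable} is vacuous (or $d=2$ handled exactly as in Theorem \ref{lemma-degenerate4}).

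I expect the main obstacle to be the middle step: carefully extracting a sub-Cantor set that lies \emph{on} an axis-parallel hyperplane (not merely in a thin neighborhood) from the information that $K$ fails to be {\bf u.n.d.}\ for every $\kappa$. The cleanest route, as sketched, is to observe that non-{\bf u.n.d.}\ implies $R_\sigma$ is $\kappa$-degenerate for every $\kappa$, hence (letting $\kappa\to\infty$, i.e.\ using the $d_{\min}=0$ clause in the definition) $R_\sigma$ is degenerate in the sense of Section \ref{section-deg}; then Lemmas \ref{lem-6.1}, \ref{lemma-degenerate1}, \ref{lemma-degenerate2} apply verbatim to land a sub-Cantor set in a finite union of axis-parallel hyperplanes, and hence a further sub-Cantor set in a single one. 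One subtlety to check is that ``$\kappa$-degenerate for all $\kappa$'' genuinely forces ``degenerate'': if $R_\sigma$ were non-degenerate it would have $d+1$ children with all $d_{\min}>0$, and on that finite configuration the ratios ${\mathcal A}^k,{\mathcal B}^k$ are finite positive numbers, so $R_\sigma$ would be $\kappa$-non-degenerate for $\kappa$ large — contradiction. After that reduction the remaining work (the $d$-dimensional {\bf u.n.d.}\ version of Lemma \ref{lemma-degenerate3} and the induction bookkeeping) is routine tracking of constants.
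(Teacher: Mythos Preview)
Your reduction to the degenerate case contains a genuine gap. You write that ``non-{\bf u.n.d.}\ implies $R_\sigma$ is $\kappa$-degenerate for every $\kappa$, hence \dots\ degenerate,'' and later justify this by showing that a \emph{fixed} $R_\sigma$ which is $\kappa$-degenerate for all $\kappa$ must be degenerate. That implication is correct for a fixed $R_\sigma$, but Lemma~\ref{lemma-und1} only says: for each $\kappa$ there exists \emph{some} $R_{\sigma(\kappa)}$ that is $\kappa$-degenerate. You have swapped quantifiers. In fact the implication ``not {\bf u.n.d.}\ $\Rightarrow$ degenerate'' is false. Take $K\subset\R^2$ built so that every $R_\sigma$ at level $n$ has three children whose centers sit (after rescaling) at $(0,0),(1,\epsilon_n),(2,2\epsilon_n)$ with $\epsilon_n\to 0$ and side-lengths $\ll\epsilon_n$. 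Every $R_\sigma$ then has three children with $d_{\min}>0$, so $K$ is non-degenerate and has no sub-Cantor set in an axis-parallel line; but for every sub-Cantor set the ratios $|\pi_2(x-x')|/|\pi_1(x-x')|$ at level $n$ are $\approx\epsilon_n\to 0$, so $K$ is not {\bf u.n.d.} Your induction cannot start for such $K$: there is no hyperplane sub-Cantor set to descend to.

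The paper's proof avoids dimension induction entirely and uses a combinatorial exhaustion instead. One fixes $N+1=\binom{d+1}{2}+1$ orthogonal transformations $\mathsf{O}_1,\dots,\mathsf{O}_{N+1}$ chosen so that the associated cones $\mathcal{C}_i$ (of the form \eqref{eq-cone}) meet only at the origin for a suitably chosen $\kappa$. Starting from the $\widehat{K}$ produced by Lemma~\ref{lemma-und2}, one argues that if $\mathsf{O}_1(\widehat{K})$ is still not {\bf u.n.d.}\ one can refine to $\widehat{K}_1\subset\widehat{K}$ so that \emph{both} $\widehat{K}_1$ and $\mathsf{O}_1(\widehat{K}_1)$ satisfy \eqref{eq_dege}; the disjointness of the cones forces the degenerate pair $(p,q)$ for $\mathsf{O}_1(\widehat{K}_1)$ to differ from that for $\widehat{K}_1$. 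Iterating, each new $\mathsf{O}_i$ uses up a fresh pair $(p,q)\in\{1,\dots,d+1\}^2$, and after $N=\binom{d+1}{2}$ steps all pairs are exhausted, so $\mathsf{O}_{N+1}(\widehat{K}_N)$ must be {\bf u.n.d.} This pigeonhole-on-pairs argument is what handles the non-degenerate but non-{\bf u.n.d.}\ case that your approach misses.
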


\begin{proof}
    Let $R$ be the diameter of $K$ and let $N = \binom{d+1}{2}$ and let ${\mathsf O}_1,\cdots {\mathsf O}_{N},{\mathsf O}_{N+1}$ be orthogonal transformations such that for each $i = 1,\cdots, N,N+1$, ${\mathsf O}_i$ maps the standard basis onto an orthonormal basis ${\mathcal B}_i = \{{\bf e}_{i,1},\cdots, {\bf e}_{i,d}\}$. We will assume that all ${\bf e}_{i,j}$ are distinct unit vectors. Let also $H_i$ be the union of the linear spans of all possible $d-1$ vectors chosen from ${\mathcal B}_i$, i.e. the union of all $(d-1)-$ dimensional hyperplanes generated by taking $d-1$ vectors from $\{{\bf e}_{i,1},\cdots, {\bf e}_{i,d}\}$.  The cone generated by $H_i$ as in (\ref{eq-cone}) will denoted by ${\mathcal C}_{i,\kappa}$. We will choose $\kappa$ so that all ${\mathcal C}_i = {\mathcal C}_{i,\kappa}\cap B(0,R)$ intersect only at the origin. 
    
\medskip

With all these parameters fixed, we consider the given Cantor set $K$. If $K$ is uniformly non-degenerate, there is nothing prove. Otherwise, by Lemma \ref{lemma-und1}, we can find $R_{\sigma}$ so that $R_{\sigma}$ is $2\kappa$-degenerate
 and Lemma \ref{lemma-und2} implies that we can find a sub-Cantor set $\widehat{K} = \bigcap_{n=1}^{\infty} \bigcup_{\sigma\in \Sigma_d^n}{\mathtt R}_{\sigma}$ such that    for all $\sigma\in \Sigma_d^n$, there exists $p\ne q$ and $i\ne j$ such that 
 $$
|\pi_i(x)-\pi_i(x')|\le \kappa^{-1} |\pi_j(x)-\pi_j(x')|
$$
for all $x\in{\mathtt R}_{\sigma p} $ and  $x'\in{\mathtt R}_{\sigma q}$. 

\medskip

Note that if there exists an orthogonal transformation ${\mathsf O}$ and there exists $\kappa>0$ such that ${\mathsf O}(\widehat{K})$ is $\kappa$-non-degenerate, then ${\mathsf O}(K)$ is uniformly non-degenerate, so the theorem is proved. Therefore, assume that for all orthogonal transformation ${\mathsf O}$ and for all $\kappa>0$, ${\mathsf O}(\widehat{K})$ is $\kappa$-degenerate. 

\medskip

We then take ${\mathsf O}={\mathsf O}_1$ and $\kappa$ in the first paragraph. By Lemma \ref{lemma-und2}, we can further find a sub-Cantor set $\widehat{K}_1$ inside $\widehat{K}$ such that $\widehat{K}_1$ satisfies (\ref{eq_dege}) (if $\widehat{K}_1$ fails (\ref{eq_dege}), then it will violate (\ref{eq-lemma7.2-pi}))  and ${\mathsf O}_1(\widehat{K}_1)$ satisfies (\ref{eq_dege}). 

\medskip

For each $\sigma$, $\widehat{K}_1$ already satisfies (\ref{eq_dege}) for some ${\mathtt R}_{\sigma p}$ and ${\mathtt R}_{\sigma q}$. We notice that for the same pair, ${\mathsf O}_1({\mathtt R}_{\sigma p})$ and ${\mathsf O}_1({\mathtt R}_{\sigma q})$ cannot satisfy (\ref{eq_dege}) due to choice of our $\kappa$, in which ${\mathcal C}_i$ only intersects at origin. Hence, they must be coming from  another pair. 

\medskip

Again, if there exists an orthogonal transformation ${\mathsf O}$ and there exists $\kappa>0$ such that ${\mathsf O}(\widehat{K}_1)$ is $\kappa$-non-degenerate, then the proof is done. Otherwise, we can further find a sub-Cantor set $\widehat{K}_2\subset \widehat{K_1}$ such that 
\begin{enumerate}
\item ${\mathsf O}_2(\widehat{K}_2)$ satisfies (\ref{eq_dege})
\item ${\mathsf O}_1(\widehat{K}_2)$ satisfies (\ref{eq_dege}),
\item $\widehat{K}_2$ satisfies (\ref{eq_dege}).
\end{enumerate}
Note that all the above three conditions must be from distinct pairs of ${\mathtt R}_{\sigma p}$ and ${\mathtt R_{\sigma q}}$ for all $\sigma$. We proceed with the proof inductively until all $N = \binom{d+1}{2}$  possible edges are exhausted. Then a sub-Cantor set ${\widehat K}_{N+1}$ with orthogonal transformation ${\mathsf O}_{N+1}({\widehat{K}}_{N+1})$ must be uniformly non-degenerate. The proof is complete. 
\end{proof}

\section{Applications}\label{Section_Appl}

\subsection{\bf Topological Erd\H{o}s similarity problem.} The Erd\H{o}s similarity conjecture asserts that for all infinite sets $P$, there always exists a set $E$ of positive Lebesgue measure such that $E$ does not contain an affine copy of $P$. It has been studied by many authors (see e.g. \cite{Fal84,Bou87,Kol97}). One can also refer to \cite{FLX23} and the references therein for some recent progress. The conjecture remains open for all fast decaying sequences and even for Cantor sets with Newhouse thickness zero. In \cite{gallagpher-lai-weber}, Gallagher, Lai, and Weber proposed a topological version of the conjecture.

\medskip

{\bf Conjecture:} For all uncountable sets $P$, there is a dense $G_{\delta}$ set $G$ that does not contain an affine copy of $P$.

\medskip

In another paper of the authors \cite{JL24}, we showed that this conjecture is equivalent to the Borel conjecture and is actually independent of the ZFC axiomatic set theory. Despite its independence, the conjecture can be verified to be true for Cantor sets \cite{gallagpher-lai-weber} on $\R^d$. In this paper, we offer another proof of this result using Theorem \ref{main-theorem} and provide an even more general statement.

\begin{theorem}\label{th-topo}
    Let $K$ be a Cantor set on $\R^d$. Then there exists a dense $G_{\delta}$ set $G$ such that $G$ does not contain $g(K)$ for all $g\in C^1_{\mathsf{inv}} (\R^d)$.
\end{theorem}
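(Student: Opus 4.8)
The plan is to take for $G$ the complement in $\R^d$ of a countable union of nowhere dense Cantor sets, each of which is forced to meet $g(K)$ for every $g$ lying in one ball of a countable open cover of $C^1_{\mathsf{inv}}(\R^d)$. The one hard ingredient is already done: it is Theorem \ref{robust-linear-general-intro}, which supplies the required robustness; the rest is soft point-set topology (Lindel\"{o}f plus Baire).

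First I would fix $K$ and, for an arbitrary $g_0\in C^1_{\mathsf{inv}}(\R^d)$, apply Theorem \ref{robust-linear-general-intro} to the pair $(K,g_0)$. This produces an $\varepsilon(g_0)>0$ and a Cantor set $\widehat K_{g_0}$ with $\big(\bigcap_{g\in{\mathsf B}^{\mathsf d}_{\varepsilon(g_0)}(g_0)}(g(K)+\widehat K_{g_0})\big)^{\circ}\ne\varnothing$. Fixing any point $p_{g_0}$ in that interior, one has $p_{g_0}\in g(K)+\widehat K_{g_0}$ for every $g$ with ${\mathsf d}(g,g_0)<\varepsilon(g_0)$; writing $p_{g_0}=g(k)+k'$ with $k\in K$ and $k'\in\widehat K_{g_0}$ shows $g(K)\cap\widetilde K_{g_0}\ne\varnothing$, where $\widetilde K_{g_0}:=p_{g_0}-\widehat K_{g_0}$ is again a Cantor set (being a homeomorphic image of one). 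So each $g_0$ comes equipped with a ball ${\mathsf B}^{\mathsf d}_{\varepsilon(g_0)}(g_0)$ and a Cantor set $\widetilde K_{g_0}$ such that $g(K)$ meets $\widetilde K_{g_0}$ for every $g$ in that ball.

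Next I would invoke topology. The traces of the balls ${\mathsf B}^{\mathsf d}_{\varepsilon(g_0)}(g_0)$ on $C^1_{\mathsf{inv}}(\R^d)$ form an open cover of it, and $C^1_{\mathsf{inv}}(\R^d)$, as a subspace of the separable metric space $(C^1(\R^d),{\mathsf d})$ (Lemma \ref{lemma_separable}), is second countable, hence Lindel\"{o}f. Extracting a countable subcover yields centers $g_1,g_2,\ldots$ and Cantor sets $\widetilde K_i:=\widetilde K_{g_i}$. Each $\widetilde K_i$ is compact and totally disconnected, hence closed with empty interior, so $U_i:=\R^d\setminus\widetilde K_i$ is open and dense; by the Baire category theorem $G:=\bigcap_{i\ge 1}U_i$ is a dense $G_\delta$ subset of $\R^d$. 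Finally, given any $g\in C^1_{\mathsf{inv}}(\R^d)$, we have $g\in{\mathsf B}^{\mathsf d}_{\varepsilon(g_i)}(g_i)$ for some $i$, so $g(K)\cap\widetilde K_i\ne\varnothing$; any point of this intersection lies in $g(K)$ but, being in $\widetilde K_i$, not in $U_i\supseteq G$. Hence $g(K)\not\subseteq G$, which is exactly the assertion (and it recovers the result of \cite{gallagpher-lai-weber} for affine maps, since invertible affine maps belong to $C^1_{\mathsf{inv}}(\R^d)$).

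The only place calling for care is the bookkeeping in the second step: converting "the intersection of the sum sets has nonempty interior" into a single translated Cantor set that $g(K)$ must hit throughout an entire ${\mathsf d}$-ball, and then using the Lindel\"{o}f property of $C^1_{\mathsf{inv}}(\R^d)$ so that countably many such Cantor sets suffice to defeat every $g$ at once. I do not expect any genuine obstacle beyond this, as all the analytic depth — the containment lemma, the orthogonal reduction to uniform non-degeneracy, and the $C^1$-perturbation estimates of Lemma \ref{lemma_ball-inclusion} — is already packaged inside Theorem \ref{robust-linear-general-intro}.
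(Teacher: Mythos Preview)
Your proposal is correct and follows essentially the same route as the paper: invoke Theorem \ref{robust-linear-general-intro} at each $g_0$, extract a countable subcover of $C^1_{\mathsf{inv}}(\R^d)$ by Lindel\"of (Lemma \ref{lemma_separable}), and then use Baire to see that the complement of the resulting countable union of nowhere dense closed sets is a dense $G_\delta$. The only cosmetic difference is that the paper, instead of picking a single point $p_{g_0}$ from the interior, takes a whole ball $B_{\delta_f}(0)$ and adds a lattice $\delta_f\Z^d$ so that $g(K)+\widehat K_f+\delta_f\Z^d=\R^d$; your single-point version is slightly cleaner and avoids that extra step.
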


\begin{proof}
    We note that a dense $G_{\delta}$ set $G$ does not contain any $C^1_{\mathsf{inv}}$ image of $K$ if and only if $g(K)\cap F\ne\emptyset$ for all $g\in C^1_{inv}(\R^d)$ where $F = \R^d\setminus G$ is an nowhere dense $F_{\sigma}$ set. Hence, we need to construct the set $F$.

    \medskip

    For all $f\in C_{\mathsf{inv}}^1(\R^d)$, by Theorem \ref{robust-linear-general-1}, there exists $\varepsilon = \varepsilon_f>0$ and a Cantor set $\widehat{K_f}$ such that 
    $$
     \left(\bigcap_{g\in{\mathsf B}^{\mathsf{d}}_{\varepsilon_f}(f)}(g(K)+\widehat{K})\right)^{\circ}\ne \emptyset.
    $$
Let $\delta_f>0$ be such that the Euclidean ball $B_{\delta_f}(0)$ is inside the interior, so that
\begin{equation}\label{eq_g-sum}
g(K)+ \widehat{K}_f+ \delta_f\Z = \R^d \ \  \forall g\in{\mathsf B}^{\mathsf{d}}_{\varepsilon_f}(f).
\end{equation}
Note that $C^1_{\mathsf{inv}}(\R^d)$ is now covered by the open balls ${\mathsf B}^{\mathsf{d}}_{\varepsilon_f}(f)$. By Lemma \ref{lemma_separable},  $C^1_{\mathsf{inv}}(\R^d)$ is covered by countably many such balls. We call them ${\mathsf B}^{\mathsf{d}}_{\varepsilon_{f_n}}(f_n)$, $n=1,2,3,\cdots$. Let
$$
F = \bigcup_{n=1}^{\infty}(\widehat{K}_{f_n}+ \delta_{f_n} \Z).
$$
From (\ref{eq_g-sum}) and the covering property, 
$$
g(K)+F = \R^d \ \forall g\in C^1_{\mathsf{inv}}(\R^d).
$$
This means that $0\in g(K)+F$ and thus $g(K)\cap (-F)\ne \emptyset$ for all $g\in C^1_{\mathsf{inv}}(\R^d)$. It is clear that $-F$ is an  $F_{\sigma}$ set. Since $\widehat{K}_{f_n}+ \delta_{f_n} \Z$ is closed and nowhere dense, $F$ is also nowhere dense by the Baire category theorem. This completes the proof. 
\end{proof}

It is clear that the theorem will not be true if we consider only $C^1(\R^d)$ as constant functions lie inside $C^1(\R^d)$, which will map Cantor sets to one point.

\medskip

As another remark, we do not know the Lebesgue measure of $F$ in the theorem. If one can show that $F$ has finite Lebesgue measure or even Lebesgue measure zero, then we will be able to show that the original Erd\H{o}s similarity conjecture is true for all Cantor sets.

\subsection{\bf Pinned distance sets in even dimensions}
We first recall the definition of pinned distance sets. Let $A\subset \R^d$, $t\in \R^d$, and $\alpha >1$. Then, the pinned distance set at $t$ with respect to norm $\|x\|_\alpha: = (\sum_{i=1}^d|x_i|^{\alpha})^{1/\alpha}$ is the set
\begin{align*}
    \Delta_t^{(\alpha)} (K_1\times K_2) = \{ \|x-t\|_\alpha : x\in K_1\times K_2\}.
\end{align*} 
The famous Falconer distance set conjecture asserted that if a Borel set $E\subset \R^d$ has $\mbox{dim}_H(E) > d/2$ (dim$_H$ denotes Hausdorff dimension), then the distance set $\Delta(E) = \{\|x-y\|_2: x,y\in E\}$ has positive Lebesgue measure. One can refer to (\cite[Chapter 4]{Mattila}) for more details. For the current best known world record towards this conjecture, one can refer to \cite{GIOW,DIOWZ}.  

At the threshold value $d/2$, Falconer constructed an example for which the distance set has zero Lebesgue measure. From Simon and Taylor \cite{ST2020}, one can construct a Cantor set on $\R^2$ whose dimension is arbitraily close to $1$ (but never equal 1), such that its pinned distance set has an interior. Indeed, they show that if $\tau(K_1)\tau(K_2)>1$, then $K = K_1\times K_2$ is the desired set. By requiring $K_1$ and $K_2$ to be the symmetric Cantor sets with  dissection ratio $\alpha$ and $\beta$, one can find some $K$ has dimension arbitrarily close to 1 and $\tau(K_1)\tau(K_2)>1$ is maintained. Using the result in our paper, we can construct a Cantor set $K$ of dimension $d/2$ on $\R^d$ with even $d$ such that the pinned distance set has non-empty interior. 

\begin{theorem}[]\label{}
For every even $d$ and $\alpha>1$, there exists a Cantor set $K$ in $\R^d$ with $\dim_H(K)=\frac{d}{2}$ such that the pinned distance set $\left(\Delta_t^{(\alpha)} (K)\right)^\circ \neq \varnothing$ for some $t\in K$.
\begin{proof}
Let $\Lambda $ be any closed interval containing $\alpha$ as an interior point and $Q_1, Q_2$ be cubes in $\R^{d/2}$ such that $Q_2 \subset \{(y_1, \cdots, y_{d/2})\in \R^{d/2} : y_i>0, \forall i = 1,\cdots d/2\}$ and $Q_1 \subset Q_2^\circ$. For simplicity, we take $ t= 0$. Define a $C^1$ function $H=H_t$ on $\Lambda\times Q_1 \times Q_2$ by 
\begin{align*}
    H_t & \left( \alpha, (x_1,\cdots, x_{d/2}), (y_1,\cdots,y_{d/2}) \right)\\
    & = ( \|(x_1, \cdots, x_{d/2}, y_1, \cdots, y_{d/2})\|_\alpha^\alpha , y_2, y_3, \cdots, y_{d/2} ).
\end{align*}
Then, the Jacobian matrix $J_{H}(y)$ of $H$ is given by
\begin{align*}
    \begin{pmatrix}
        \alpha y_1^{\alpha-1} & \alpha y_2^{\alpha-1} & \cdots & \cdots & \alpha y_{d/2}^{\alpha -1}\\
        0 & 1 & 0 & \cdots & 0\\
        0 & 0 & 1 & \cdots & 0 \\
        \vdots & \vdots & \vdots & \ddots & \vdots \\
        0 & 0 & 0 & \cdots & 1
    \end{pmatrix}.
\end{align*}
Note that $J_H(y)$ has determinant $\alpha y_1^{\alpha-1}$, which is nonzero if and only if $y_1\ne 0$. By our assumption on $Q_2$, the determinant is nonzero for all $y$. Let $K_1\subset Q_1$ be any Cantor set with Hausdorff dimension zero. Then, Theorem \ref{main-theorem} implies that there exists a Cantor set $K_2 \subset Q_2$ such that $H(\alpha, K_1, K_2)$ has a nonempty interior. Projecting onto the first coordinate of $\R^{d/2}$, we have that $\{\|x\|_\alpha^\alpha : x\in K_1\times K_2\}$ has nonempty interior. Thus, $\Delta_0^{(\alpha)}(K_1\times K_2)$ has nonempty interior. We notice that $K_2$ has a positive Lebesgue measure (See Remark 3.5 (1)), so $\mbox{dim}_H(K_2) = \mbox{dim}_B(K_2) =d/2$. By the well-known dimension estimate formula (\cite{falconer-book}), 
\begin{align*}
    \dim_H(K_1)+\dim_H(K_2) \leq \dim_H(K_1\times K_2) \leq \dim_H(K_1)+\dim_B (K_2),
\end{align*}
we have $\dim_H(K_1\times K_2)=\frac{d}{2}$. Finally, we can put a Cantor set $K_0$ of Hausdorff dimension $0$ and it contains the origin. Let $K = K_0\cup (K_1\times K_2)$. Then $K$ has our desired property stated in the theorem. 
\end{proof}
\end{theorem}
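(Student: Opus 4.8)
The plan is to realize the $\alpha$-th power of the pinned distance function at the origin as the first coordinate of an auxiliary $C^1$ map to which Theorem~\ref{main-theorem} applies, and then to do the dimension bookkeeping afterward. Write $d = 2m$; the evenness of $d$ is used precisely so that $\R^d = \R^m \times \R^m$ splits as a product of two equal factors. First I would fix a compact parameter interval $\Lambda \subset \R$ with $\alpha \in \Lambda^{\circ}$, and choose cubes $Q_1 \subset Q_2 \subset \R^{m}$ with $Q_1 \subset Q_2^{\circ}$ and $Q_2$ contained in the open positive orthant $\{y : y_i > 0 \ \forall i\}$. Take the pin $t = 0$ and define $H : \Lambda \times Q_1 \times Q_2 \to \R^{m}$ by
$$
H(\alpha, x, y) = \left( \|(x_1,\dots,x_m,y_1,\dots,y_m)\|_\alpha^{\alpha}, \ y_2, \ y_3, \ \dots, \ y_m \right).
$$
Since $s \mapsto s^{\alpha}$ is $C^1$ on $(0,\infty)$ and the coordinates $x_i, y_i$ remain in a compact subinterval of $(0,\infty)$ over $Q_1\times Q_2$, $H$ is $C^1$; its Jacobian in $y$ is upper-triangular with diagonal $(\alpha y_1^{\alpha-1}, 1, \dots, 1)$ and hence has determinant $\alpha y_1^{\alpha-1} \neq 0$ throughout $Q_2$. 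Thus the hypotheses of Theorem~\ref{main-theorem} are satisfied with $N = 1$ and ambient dimension $m$.

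Next I would take $K_1 \subset Q_1$ to be any Cantor set with $\dim_H(K_1) = 0$ (for instance a sufficiently sparse symmetric Cantor set). Theorem~\ref{main-theorem} then yields a Cantor set $K_2 \subset Q_2$ with $\left(H(\alpha, K_1, K_2)\right)^{\circ} \neq \varnothing$. Projecting onto the first coordinate, $\{\|z\|_\alpha^{\alpha} : z \in K_1 \times K_2\}$ has nonempty interior; since $r \mapsto r^{1/\alpha}$ is a homeomorphism of $(0,\infty)$, the set $\Delta_0^{(\alpha)}(K_1 \times K_2) = \{\|z\|_\alpha : z \in K_1 \times K_2\}$ also has nonempty interior.

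For the dimension count I would use the fact (Remark~3.5(1)) that the Cantor set $K_2$ produced by the containment lemma is a centrally symmetric Cantor set whose $n$-th stage gaps can be taken arbitrarily small, hence has positive Lebesgue measure; therefore $\dim_H(K_2) = \dim_B(K_2) = m$. The product formula $\dim_H(K_1) + \dim_H(K_2) \le \dim_H(K_1 \times K_2) \le \dim_H(K_1) + \dim_B(K_2)$ then forces $\dim_H(K_1 \times K_2) = m = d/2$. Finally, since $K_1 \times K_2$ lies in the positive orthant and so misses the origin, I would fix any Cantor set $K_0$ of Hausdorff dimension $0$ containing $0$ and set $K = K_0 \cup (K_1 \times K_2)$; then $t = 0 \in K$, $\Delta_0^{(\alpha)}(K) \supset \Delta_0^{(\alpha)}(K_1 \times K_2)$ still has interior, and $\dim_H(K) = \max\{\dim_H(K_0), \dim_H(K_1 \times K_2)\} = d/2$.

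The substantive work is entirely contained in Theorem~\ref{main-theorem}; the only points requiring care here are the (mild) bookkeeping ones: arranging $Q_2$ to sit in the open positive orthant, bounded away from the coordinate hyperplanes, so that $y_1^{\alpha-1}$ is smooth and the $y$-Jacobian of $H$ is everywhere invertible; observing that taking $\alpha$-th roots preserves the interior; and observing that adjoining a zero-dimensional Cantor set containing the pin changes neither the interior of the pinned distance set nor the Hausdorff dimension. I expect the verification that $H$ is $C^1$ with invertible Jacobian to be the only place a reader could stumble, and it is handled by the positive-orthant restriction.
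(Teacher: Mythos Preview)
Your proposal is correct and follows essentially the same route as the paper: you define the same auxiliary map $H(\alpha,x,y)=(\|(x,y)\|_\alpha^\alpha,y_2,\dots,y_m)$ on cubes in the positive orthant, verify the $y$-Jacobian is invertible, invoke Theorem~\ref{main-theorem} with a zero-dimensional $K_1$, and then carry out the same dimension bookkeeping and adjoin a zero-dimensional $K_0$ containing the pin. The only differences are cosmetic---you add a line about $r\mapsto r^{1/\alpha}$ being a homeomorphism and say explicitly why $H$ is $C^1$ on the positive orthant---so there is nothing to compare.
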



\bibliographystyle{amsplain}
\bibliography{refs}

\end{document}